\documentclass[11pt,reqno]{article}
\usepackage[top=1.8cm, bottom=1.8cm, left=2cm, right=2cm]{geometry}

\usepackage{float}
\usepackage{graphicx}
\usepackage{subcaption}
\usepackage{pgfplots}
\pgfplotsset{compat=1.18}
\usepackage{caption}
\usepackage{amsmath}
\numberwithin{equation}{section}
\usepackage{amssymb}
\usepackage{amsfonts}
\usepackage{dsfont}
\usepackage{cases}
\usepackage[hyphens]{url}
\usepackage{amsthm}
\usepackage{upgreek}
\usepackage{xcolor}
\usepackage{bm}

\usepackage{mathtools}

\usepackage{hyperref}
\hypersetup{
    colorlinks=true,
    citecolor=blue,
    linkcolor=blue,
    urlcolor=orange
}

\usepackage{cleveref}

\usepackage[
    backend=biber,
    style=authoryear,
    giveninits=true,
    uniquename=true,
     dashed=false,
      maxnames=99,
    minnames=99
]{biblatex}

\DeclareMathOperator*{\argmax}{arg\, max}
\DeclareMathOperator*{\argmin}{arg\, min}

\newtheorem{theorem}{Theorem}[section]
\newtheorem*{theorem*}{Theorem}

\newtheorem{lemma}{Lemma}[section]

\newtheorem{proposition}{Proposition}[section]
\newtheorem{assumption}{Assumption}[section]
\newtheorem{definition}{Definition}[section]
\newtheorem{remark}{Remark}[section]

\newcommand{\Eb}{\mathbb{E}}

\newcommand{\Rb}{\mathbb{R}}

\newcommand{\Qc}{\mathcal{Q}}
\newcommand{\Ac}{\mathcal{A}}

\newcommand{\Fc}{\mathcal{F}}
\newcommand{\Dc}{\mathcal{D}}

\newcommand{\Bc}{\mathcal{B}}

\newcommand{\Ph}{\widehat{\Pi}}
\newcommand{\ph}{\widehat{\pi}}
\newcommand{\Xih}{\widehat{\Xi}}
\newcommand{\Xh}{\widehat{X}}
\newcommand{\xh}{\widehat{\xi}}

\newcommand{\dd}{\mathrm{d}}
\newcommand{\ee}{\mathrm{e}}

\begin{document}
\title{Equilibrium for regular-singular control under mean-variance criterion: A unified approach via control laws}

\author{Zongxia Liang\thanks{Department of Mathematical Sciences, and Center for Insurance and Risk Management, School of Economics and Management, Tsinghua University,  China. Email: liangzongxia@tsinghua.edu.cn} 
	\hspace{2ex}
	Xiaodong Luo\thanks{Department of Applied Mathematics, The Hong Kong Polytechnic University, Kowloon, Hong Kong. Email: xiaodong.luo@polyu.edu.hk}
    \hspace{2ex}
	Jiayu Zhang\thanks{Department of Mathematical Sciences,  Tsinghua University,  China. Email: zjy23@mails.tsinghua.edu.cn}}

\date{\today}

\maketitle

\begin{abstract}
This paper studies a class of mixed regular-singular control problems under mean-variance criteria, where the drift and diffusion are allowed to depend on both the regular control and the level of singular control. We seek time-consistent equilibrium strategies in an intrapersonal game setting and propose a novel equilibrium notion  where both regular and singular controls are unified via control laws, or equivalently, mappings on the augmented state space. Under which, we derive a verification theorem and necessary conditions providing a full mathematical characterization of the equilibrium.  We apply the theory to a reinsurance problem. The equilibrium solution turns out to be nontrivially coupled, where the regular control depends on the level of singular control, and the free boundary of the singular control switches dynamically in accordance with the variation of the regular control expression.  The free boundary is characterized in a piecewise semi-explicit manner and is proved to be $C^{1}$ across the switching point. In the degenerate case $\alpha_2=0$, the coupled solution reduces to the combination of two independent single-control equilibria and coincides with the limit as the parameter tends to zero.
\end{abstract}

\noindent\textbf{Key words.} regular-singular control law, mean-variance criterion, 
 time-consistent equilibrium, coupled solution, free boundary switching

\medskip

\section{Introduction}
Bellman's principle of optimality, the very cornerstone of stochastic control, is violated when the objective functional depends on future preferences in a way that destroys the recursive property fundamental to dynamic programming. Consequently, a strategy regarded as optimal today may fail to be optimal when re-evaluated at a future time. This gives rise to the issue of time inconsistency, which has become a central topic in modern stochastic control theory. Various sources of time inconsistency have been identified in the literature, including mean-variance optimization, non-exponential discounting and probability distortion, see, e.g.,  \textcite{strotz1955myopia}, \textcite{basak2010dynamic} and \textcite{barberis2012model}, and references therein. Among these formulations, the mean-variance (MV) criterion occupies a particularly important position due to its central role in finance, actuarial science and risk management.

The mean-variance (MV) criterion, initially proposed by \textcite{Markowitz1952} within the single-period portfolio selection framework, stands as one of the most influential criteria in the field of quantitative finance. When this criterion is extended to multi-period or continuous-time settings, the variance term inherent in the MV criterion naturally gives rise to the issue of time inconsistency. One line of research addresses this difficulty by considering pre-commitment strategies , whereby the decision maker optimizes the objective functional at the initial time and commits to implementing the resulting policy throughout the entire planning horizon. The corresponding optimization problems can be solved via martingale methods (see, e.g.,  \textcite{BajeuxBesnainou1998}, \textcite{gao2026dynamic} and etc) or embedding techniques (see, e.g.,  \textcite{zhou2000continuous}, \textcite{li2000optimal}, and etc). However, because the decision maker's preferences and risk attitudes may evolve over time, requiring a rational agent to adhere to a policy that is no longer optimal from the perspective of a future self lacks behavioral consistency.

A different perspective can be traced back to Strotz's intrapersonal game formulation, in which the successive incarnations of the same decision maker are viewed as noncooperative players in a dynamic game (see \textcite{strotz1955myopia}). This perspective later became the game-theoretic foundation for time-inconsistent stochastic control, replacing global optimality with the notion of a time-consistent equilibrium. \textcite{ekeland2006being} first introduced a rigorous definition of intra-personal equilibrium in continuous time. Building on this game-theoretic perspective, \textcite{basak2010dynamic} first systematically derived time-consistent equilibrium strategies for the continuous-time mean--variance portfolio selection problem, providing an analytical characterization of the equilibrium portfolios in general incomplete markets. \textcite{bjork2010general} further developed the game-theoretic framework by establishing an extended Hamilton--Jacobi--Bellman (HJB) system together with a corresponding verification theorem for a broad class of Markovian time-inconsistent stochastic control problems. The equilibrium framework was subsequently generalized to incorporate state-dependent risk aversion by \textcite{bjork2014mean}. Meanwhile, \textcite{hu2012time,hu2017time} developed a rigorous equilibrium theory for time-inconsistent stochastic linear--quadratic control problems, including the characterization and uniqueness of equilibrium. Since then, the equilibrium framework has been widely applied to a broad range of problems, including portfolio selection (see, e.g., \textcite{kronborg2015inconsistent}, \textcite{chen2019time},  \textcite{wang2025time} and etc ), optimal dividend problem (see, e.g., \textcite{zhao2014dividend}, \textcite{li2016equilibrium}, \textcite{cao2025equilibrium} and etc), asset--liability management (see, e.g., \textcite{chiu2020mean} and etc), and reinsurance--investment (see, e.g., \textcite{lin2016time} and etc). 

It is worth noting that the aforementioned studies are all confined to regular control problems, while research on time‑inconsistent singular control problems has only gradually emerged in recent years. \textcite{liang2024equilibria} first established an equilibrium framework for time‑inconsistent singular control with non‑exponential discounting preferences, proposed the corresponding equilibrium concept, and proved the corresponding verification theorem. Subsequently, \textcite{bodnariu2025time} further introduced the notion of mild thresholds and extended the equilibrium strategy to more general classes of controls. In another stream of research, \textcite{dai2024dynamic} proposed a different definition of equilibrium in a time-inconsistent mean–variance portfolio selection problem with transaction costs and characterized the equilibrium trading strategies. Meanwhile, the theory of time-inconsistent singular control has also been applied to a variety of financial and insurance models.  \textcite{liang2025stackelberg} investigated the optimal reinsurance problem under the mean–variance criterion, while \textcite{cao2026equilibrium} analyzed the optimal dividend problem within the mean–variance framework, and they respectively provided the corresponding characterization of equilibrium strategies and verification results. More recently, \textcite{caoyue2026equilibrium} investigated an equilibrium singular dividend control problem arising from ambiguity aggregation of heterogeneous discount rates, and \textcite{dai2026time} studied equilibrium singular control problems involving a running minimum process.

Despite the recent progress in the theory of time-inconsistent singular control, singular interventions alone are insufficient to capture many practical decision-making problems. In a wide range of applications, including finance, insurance, and resource management, decision makers often employ both continuous adjustments and singular interventions, naturally leading to mixed regular--singular control problems (see, e.g., \textcite{taksar1998optimal}, \textcite{chen2021free}, \textcite{qiu2023optimal} and etc). However, previous literature is predominantly confined to time-consistent problems, and a general equilibrium framework for mixed regular--singular control problems under the time-inconsistent mean--variance criterion is still lacking. 

Motivated by the above observations, this paper investigates a class of  regular--singular stochastic control problems under the mean-variance criterion in which the drift and diffusion coefficients of the controlled state process are jointly affected by both regular and singular controls. Our main contributions are summarized as follows.

First, we develop a closed-loop equilibrium framework for this general class of
time-inconsistent regular--singular stochastic control problems based on the unified notion of control laws. In our framework, both the regular and singular controls are simultaneously and jointly generated by the corresponding control laws. Control laws are equivalently mappings on the augmented state space, where the regular control law maps into $\mathbb{R}$ while the singular control law is equivalently a mapping into the binary-valued set of ``waiting'' and ``intervening''. In contrast to existing literature on regular-singular control (see, e.g., \textcite{qiu2023optimal} and etc), where the regular control is usually specified by a feedback function while the singular control is given separately as a stochastic process, our framework provides a unified perspective on these two types of controls. Motivated by the fact that the two controls jointly influence both the drift and diffusion coefficients of the controlled state process and are therefore intrinsically coupled, our framework simultaneously perturbs the regular and singular controls. The approach is then different from that in \textcite{liang2023weak}, where the regular control and the stopping time are perturbed separately in a mixed control problem of regular control and stopping. The corresponding definition of equilibrium strategies in our framework is presented in Definition~\ref{equ_law}.

Second, under suitable regularity and integrability conditions, we establish both a verification theorem and a necessary condition for equilibrium strategies; see Theorems~\ref{verification thm} and \ref{necessary thm}. These results provide a rigorous theoretical foundation for the proposed equilibrium framework by characterizing equilibrium strategies from both sufficient and necessary perspectives.

Finally, we derive the explicit equilibrium solutions within the specific context of reinsurance. A distinctive feature of the resulting equilibrium is the mutually coupling between the regular and singular controls. On one hand, the equilibrium regular control explicitly depends on the singular control (see \eqref{pi_equilibrium}); on the other hand, the free boundary of the singular control is determined by the regular control. Consequently, a regime-switching phenomenon occurs in the singular free boundary, driven by the structural transitions in the regular control expression. Moreover, although the free boundary $k(t)$ is characterized in a piecewise semi-explicit manner according to different forms of the regular control, it still satisfies $k(t) \in C^1(\{t\in [0,T]:k(t)>0\})$, indicating that the switching of its expression does not destroy first-order smoothness (see Proposition~\ref{C1} and Remark~\ref{c1}). To the best of our knowledge, such a switching phenomenon has not been reported in the existing literature on time-consistent regular--singular stochastic control. In the available explicit solutions, the singular free boundary is either independent of the regular control or only implicitly coupled with it through shared parameters, rather than being determined by the active regular-control regime;  see, e.g., \textcite{guo2004constrained}, \textcite{hafayed2016mckean} and \textcite{guan2023dynamic}. Furthermore, we investigate the degenerate cases involving single controls and the vanishing coupling parameter ($\alpha_2 = 0$). The results indicate that as the coupling parameter vanishes, the equilibrium solution converges to the limit of the two single-control scenarios. In other words, when the coupling parameter disappears, the equilibrium solutions become fully decoupled and can be solved separately from the respective single-control problems.

The remainder of the paper is organized as follows. In Section~\ref{model}, we formulate the model and introduce the equilibrium framework. Section~\ref{verfication} provides a verification theorem, and Section~\ref{necessary} establishes a necessary condition for equilibrium solutions. Explicit equilibrium solutions and their structural properties,  and  several degenerate cases are studied in Section~\ref{Example}.

\section{Model}
\label{model}
\subsection{Problem setting}
We consider a filtered probability space 
$(\Omega, \mathcal{F}, \{\mathcal{F}_t\}_{t \ge 0}, \mathbb{P})$
satisfying the usual conditions, and supporting a standard Brownian motion 
$\{B_t\}_{t \ge 0}$. We introduce two distinct types of controls: a \textit{regular control} $\pi = \{\pi_t\}_{t \ge 0}$  and a \textit{cumulative singular control} $\xi = \{\xi_t\}_{t \ge 0}$. We consider the controlled state (risk exposure/wealth) process $X^{\pi,\xi}$ under the regular control $\{\pi_t\}_{t \ge 0}$ and singular control $\{\xi_t\}_{t\ge0}$ defined by the stochastic differential equation(abbr. SDE):
\begin{equation}\label{X}
\begin{cases}
\dd X_s^{\pi,\xi}
= \mu\big(X_s^{\pi,\xi}, s, \xi_s, \pi_s\big)\, \dd s
+ \sigma\big(X_s^{\pi,\xi}, s, \xi_s, \pi_s\big)\, \dd B_s
- \dd \xi_s, \quad s \in [0,T], \\
X_{0-}^{\pi,\xi} = x_0,
\end{cases}
\end{equation}
where the functions $\mu(\cdot,\cdot,\cdot,\cdot)$ and $\sigma(\cdot,\cdot,\cdot,\cdot)$ are given deterministic continuous functions. The {regular control}  $\pi = \{\pi_s\}_{s \ge 0}$ is a progressively measurable process taking values in $U_1$, and  $U_1 \subset \Rb$ is a compact set representing the admissible region for the regular control. The { singular control} $\xi = \{\xi_s\}_{s \ge 0}$ is a non-decreasing, c\`adl\`ag, $\{\Fc_t\}_{t\geq 0}$-adapted process with $\xi_{0-} = 0$ and bounded from above by a maximum capacity $m>0$. 

We consider a time-inconsistent mean-variance type objective functional.  For any admissible control pair $(\pi,\xi)$, the   objective functional is 
\begin{equation}\label{J0}
J(x,t,y;\pi,\xi)
:= \Eb_{x,t,y}\big[X_T^{\pi,\xi}\big]
+ \gamma \operatorname{Var}_{x,t,y}[X^{\pi,\xi}_{T}]+\theta \Eb_{x,t,y}\int_{t}^{T}c(X_{s-}^{\pi,\xi},s,\xi_{s-})e^{\rho(T-s)} \circ \dd\xi_{s},
\end{equation}
where 
{\small \begin{equation*}
\int_{t}^{T}c(X_{s-}^{\pi,\xi},s,\xi_{s-})e^{\rho(T-s)} \circ \dd\xi_{s}: =  \int_{t}^{T}c(X_{s-}^{\pi,\xi},s,\xi_{s-})e^{\rho(T-s)}  \dd\xi^c_{s} + \sum_{s\in[t,T]}\int_{0}^{\Delta \xi_s} c(X_{s-}^{\pi,\xi}-z,s,\xi_{s-}+z)\ee^{\rho(T-s)}\dd z.
\end{equation*}}
This paper is to minimize  the  objective functional $ J(x,t,y;\pi,\xi)$, i.e., solving the following  problem:
\begin{eqnarray}\label{OP1}
\inf_{\pi,\xi} \{J(x,t,y;\pi,\xi) \}.  
\end{eqnarray}
The discounted cumulative integral involving $c(\cdot,\cdot,\cdot)$ represents the total expected running cost incurred by executing the singular control policies over the remaining horizon, and $\xi^c$ denote the continuous part of $\xi$. This definition guarantees the additivity of intervention costs with respect to jump sizes and prevents the cost functional from depending on the particular representation of a singular intervention. Such path-independent specifications of jump costs are commonly adopted in the singular stochastic control literature (see, e.g., \textcite{zhu1992generalized}). 
The positive constants $\gamma$ (the risk aversion parameter) and $\theta$  serve as weighting factors that balance the components of the performance functional, thereby shaping the relative penalization of state variability, expected terminal outcomes and control activity. The parameter $\rho > 0$ is the discount rate, capturing the intertemporal valuation of future payoffs and ensuring the well-posedness of the infinite-horizon preference structure. The conditional expectation $\Eb_{x,t,y}$ and $\operatorname{Var}_{x,t,y}$ is conditioning on $X_{t-}^{\pi,\xi} = x$ and $ \xi_{t-} = y$.

\begin{remark}[Well-posedness of the terminal condition]
At the terminal time $t = T$, the objective functional simplifies to a static optimization over the terminal jump size $\Delta \xi_T$. Absent the global upper bound (i.e., $\xi_T \in [0, +\infty)$), the cost functional $J(x, T, y;\pi,\xi)$ can potentially be decreasing with respect to $y$, depending on the configurations of $ \theta$ and $c(x,t,y)$. Under such configurations, the minimization problem immediately suffers from an unbounded descent:
\begin{equation}
\inf_{\Delta \xi_T \ge 0} J(x, T, y;\pi,\xi) = -\infty,
\end{equation}
which implies that the terminal control expands to infinity, rendering the boundary condition of the HJB equations ill-defined. Consequently, the compact constraint $\xi_T \in [0, m]$ is indispensable to guarantee the existence of an equilibrium strategy and the closure of the verification theorem.

We note that if $J$ is strictly convex with a finite minimum under $m = +\infty$, our main analytical results remain perfectly valid. Nevertheless, to ensure uniform well-posedness against unbounded breakdowns under arbitrary parameter pair $( \gamma, \theta)$, this paper primarily adopts the bounded setting $m$. Practically, this setting aligns with realistic risk management constraints. The bound $m$ naturally represents the maximum capacity for purchasing reinsurance, or the hard budget constraint of a corporate risk reserve fund. Therefore, the compact constraint $\xi_T \in [0, m]$ is both mathematically indispensable and practically grounded.
\end{remark}

To ensure the analytical tractability and uniqueness of the optimal strategies for the generalized system \eqref{X} with \eqref{OP1}, we introduce the following structural configuration on the decision-making rule.

\begin{assumption}\label{asm:tie-breaking}
In the event that multiple singular control actions yield identical performance outcomes within the objective functional $J$, the controller consistently opts for the maximal admissible intervention capacity.
\end{assumption}

\begin{remark}[Technical Necessity of Assumption \ref{asm:tie-breaking}]
At the terminal horizon $t =T$, there may exist a degenerate region where the performance functional $J$ becomes insensitive to the singular control level. While any singular action within this domain yields identical performance, this non-uniqueness prevents the determination of a well-defined terminal condition for the extended HJB equations. 

Assumption \ref{asm:tie-breaking} operates as a systematic tie-breaking rule that selects the maximal admissible intervention capacity via a standard measurable selection argument. This device uniquely determines the terminal boundary condition, which is a prerequisite for deriving closed-form equilibrium strategies. Crucially, this boundary convention modifies neither the admissible control set nor the interior optimality conditions.
\end{remark}

\subsection{Admissible Control Law and Admissible Regular-Singular Control Pair}

Because the  regular–singular control problem \eqref{OP1} involves a mean-variance (MV) formulation of the terminal state $X_T^{\pi,\xi}$, it
is  a time-inconsistent stochastic control problem, we seek an equilibrium for the problem. We formulate the equilibrium framework of   Problem \eqref{OP1} in the sense of closed-loop control. To do this, we first design admissible control law and the admissible regular-singular control pair in this subsection, then we propose  the equilibrium framework in  next subsection. \ \ 
Define the state-time-(singular) control space of the framework by $$ \Qc := \Rb \times [0,T] \times [0,m].$$
\noindent
 The regular control process $\pi$ is generated by some regular control law $\Pi = \Pi(\cdot,\cdot,\cdot)$ as a feedback function of the triple state-time-(singular) control, while the singular control process is generated by some singular control law $\Xi$ as a division of the space $\Qc$. \\ 
 \vskip 3pt 
 First, the admissible regular-singular control pairs for Problem (\ref{OP1}) are defined by the following. The definition of admissible pairs of perturbations is obtained with an additional growth constraint, which will later be used in our equilibrium definition.
\begin{definition}[Admissible regular-singular control pair]
\label{admissible pair}
Fix initial $(x,t,y) \in \Qc$, let $\{\pi_s\}_{s\in [t,T]}$ be a progressively measurable process and $\{\xi_s\}_{s\in [t,T]}$ be a non-decreasing, c\`adl\`ag, $\{\Fc_s\}_{s\ge t}$-adapted process. Then $(\{\pi_s\}_{s\in [t,T]}, \{\xi_s\}_{s\in [t,T]})$ is called the  admissible regular-singular control pairs  on $[t,T]$ if the followings hold:

(1) $\xi_{t-}=y$ and the SDE \begin{equation}
\label{SDE}
\begin{cases}
     \dd X_s^{\pi,\xi} = \mu(X_s^{\pi,\xi},s,\xi_s,\pi_s) \dd s+\sigma(X_s^{\pi,\xi},s,\xi_s,\pi_s) \dd B_s - \dd\xi_s,\,\forall s \in [t,+\infty),\\
     X_{t-}^{\pi,\xi} = x
\end{cases}\end{equation}
has a unique strong solution $\{X_s^{\pi,\xi}\}_{s\in[t,T]}$.

(2) The strong solution $X^{\pi,\xi}$ given in  \eqref{SDE} satisfies 
\begin{equation}
    \begin{aligned}
        \Eb_{x,t,y}(X_T^{\pi,\xi})^2 < + \infty,\quad  \Eb_{x,t,y} \left[\int_t^T \ee^{\rho(T-s)}c(X^{\pi,\xi}_{s-},s,\xi_{s-}) \circ \dd \xi_s \right]< +\infty.
    \end{aligned}
\end{equation}

(3) For any $s \in [t,T]$, $\Delta \xi_s$ is $\Fc_{s-}$ measurable.

(4) $\pi_s \in U_1, \ \ \xi _s \in [0,m]$ almost surely for any $s\in [t,T].$

Denote the set of all admissible regular-singular control pairs on $[t,T]$ by $\Dc_{[t,T]}$. If there exist constants $h \in (0,T-t]$ and $M>0$ such that $\xi$ additionally satisfies
\begin{equation}
    \label{pert_xi}
    \xi_{(t+h')-}-\xi_t \le Mh', a.s., \forall h' \in (0,h),
\end{equation}
then we call $(\{\pi_s\}_{s\in[t,t+h)}, \{\xi_s\}_{s\in[t,t+h)})$ an admissible  regular-singular control pair of perturbations on $[t,t+h)$. We denote the set of all admissible  regular-singular control pairs  of perturbations  on $[t,t+h)$ by $\Dc'_{[t,t+h)}$.
\end{definition}
\begin{remark}
    The control pair defined in  Definition \ref{admissible pair} constitutes an open‑loop control, as it is specified directly as adapted processes without any dependence on state feedback. Growth condition similar to \eqref{pert_xi} is standard in the literature on time-inconsistent singular controls, see \textcite{liang2024equilibria} and \textcite{dai2026time}. 
\end{remark}

 Second, for the closed-loop formulation, we introduce the definition of regular-singular control laws. In the closed-loop formulation for time-inconsistent regular control problems, the feedback control $\Pi: \Qc \rightarrow \Rb$ is a mapping from $\Qc$ into $\Rb$, and is termed as the regular control law in  \textcite{bjork2017time} and \textcite{bjork2021time}. While in time-inconsistent singular control literature \textcite{liang2024equilibria} and \textcite{liang2025stackelberg}, the partition $\Xi = (W^{\Xi}, P^{\Xi})$ of $\Qc$ is termed the singular control law. $\Xi$ is equivalently a mapping from $\Qc$ into the binary-set of ``waiting'' and ``intervening''. Hence, control laws can be unified as mappings on the augmented state space $\Qc$, which leads to the following definition for regular-singular control laws.
\begin{definition}[Admissible regular-singular control law]
    Let $\Pi = \Pi(\cdot,\cdot,\cdot)$ be a borel-measurable function on $\Qc$, and $\Xi = (W^{\Xi},P^{\Xi})$ be a division of $\Qc$ with $W^{\Xi}$ denoting the waiting region and $P^{\Xi} = (W^{\Xi})^c$ be the action region (the complement of the waiting region). The regular-singular control law  $(\Pi,\Xi)$ is admissible if the following conditions hold:
    
    (1) The Skorohod reflection problem 
\begin{equation}
   \!\!  \!\!   \!\!   \!\!  \begin{cases}
    \label{xi}
        \dd X_s^{\Pi,\Xi} = \mu(X_s^{\Pi,\Xi},s,\xi_s^{\Pi,\Xi},\Pi(X_s^{\Pi,\Xi},s,\xi_s^{\Pi,\Xi})) \dd s+\sigma(X_s^{\Pi,\Xi},s,\xi_s^{\Pi,\Xi},\Pi(X_s^{\Pi,\Xi},s,\xi_s^{\Pi,\Xi})) \dd B_s - \dd\xi_s^{\Pi,\Xi},\\ \hskip 0.5cm \,\forall s \in [t,T],\\
        (X_s^{\Pi,\Xi},s,\xi_s^{\Pi,\Xi}) \in \overline{W^{\Xi}},\,\, \forall s \in (t,T],\\
        X_{t-}^{\Pi,\Xi} = x,\\
        \xi_s^{\Pi,\Xi} = y +\int_t^s\mathbf{1}_{\{ (X_r^{\Pi,\Xi},r,\xi_r^{\Pi,\Xi}) \in P^{\Xi}\}} \dd \xi_r^{\Pi,\Xi},\,\, \forall s \in [t,T],
    \end{cases} 
    \end{equation}
has a unique strong solution $(X^{x,t,y;\Pi,\Xi}, \xi^{x,t,y;\Pi,\Xi}):=(\{X_s^{\Pi,\Xi}\}_{s \in [t,T]}, \{\xi_s^{\Pi,\Xi}\}_{s \in [t,T]})$. The processes $\pi^{x,t,y;\Pi,\Xi}:= \{\Pi(X_s^{\Pi,\Xi},s,\xi^{\Pi,\Xi}_s)\}_{s\in [t,T]}$ and $\xi^{x,t,y;\Pi,\Xi}$ are respectively called  the regular control and singular control process simultaneously generated by the pair of control law $(\Pi,\Xi)$ at the initial $(x,t,y) \in \Qc$.

(2) The strong solution $(X^{x,t,y;\Pi,\Xi}, \xi^{x,t,y;\Pi,\Xi})$ given in \eqref{xi} satisfies 
\begin{equation}
    \begin{aligned}
        \Eb_{x,t,y}(X_T^{\Pi,\Xi})^2 < + \infty,\quad  \Eb_{x,t,y} \left[\int_t^T \ee^{\rho(T-s)}c(X_{s-}^{\Pi,\Xi},s,\xi^{\Pi,\Xi}_{s-}) \circ  \dd \xi^{\Pi,\Xi}_s\right] < +\infty.
    \end{aligned}
\end{equation}

(3) For any $t \in [0,T]$, $\Delta \xi_t$ is $\Fc_{t-}$ measurable.

(4) $\Pi(x,t,y) \in U_1, \ \ \ \xi_s^{x,t,y;\Pi,\Xi} \in [0,m]$ almost surely for any $(x,t,y) \in \Qc$ and $s\in[t,T]$.
\end{definition}
\noindent
Then the  objective functional of an admissible regular-singular control law $(\Pi,\Xi)$ is 
\begin{equation}
    \label{J2}
J(x,t,y;\Pi,\Xi):=\mathbb{E}_{x,t,y}[X^{\Pi,\Xi}_{T}]+ \gamma \operatorname{Var}_{x,t,y}[X^{\Pi,\Xi}_{T}]+\theta \mathbb{E}_{x,t,y}\int_{t}^{T}c(X^{\Pi,\Xi}_{s-},s,\xi^{\Pi,\Xi}_{s-})e^{\rho(T-s)} \circ \dd\xi^{\Pi,\Xi}_{s}.
\end{equation}

We next provide sufficient conditions on the drift and diffusion coefficients of the controlled state process to ensure that an arbitrary regular-singular control satisfies the first two admissibility conditions of Definition~\ref{admissible pair}.
\begin{proposition}
\label{square int}
    If there exists a constant $K>0$ such that 
    \begin{equation}
        \begin{aligned}
           & |\mu(x_1,t,y,u)-\mu(x_2,t,y,u)|+ |\sigma(x_1,t,y,u)-\sigma(x_2,t,y,u)| \le K|x_1-x_2|,  \label{Lipschitz}\\
           & \forall x_1,x_2 \in \Rb,\  \forall(t,y,u) \in [0,T] \times [0,m] \times U_1, \\
            &|\mu(x,t,y,u)|^2+|\sigma(x,t,y,u)|^2 + |c(x,t,y)| \le K(1+x^2),\ \forall (x,t,y,u) \in \Rb \times [0,T] \times [0,m] \times U_1,
        \end{aligned}
    \end{equation}
    then for any $(x,t,y)\in \Qc$, any progressively measurable process $\{\pi_s\}_{s \in [t,T]}$, any non-decreasing , c\`adl\`ag, $\{\Fc_s\}_{s\in [t, T]}$-adapted process $\{\xi_s\}_{s\in[t,T]}$ such that $\xi_{t-}=y$ and $\xi_s \in [0,m],\ \pi_s \in U_1,a.s., \ \forall s\in [t,T]$, we have that there exists a unique strong solution for the SDE \eqref{SDE} satisfying 
    \begin{equation}
            \Eb_{x,t,y} \left[\sup_{s \in [t,T]}(X_s^{\pi,\xi})^2\right] < +\infty,\quad \Eb_{x,t,y} \left[\int_t^T \ee^{\rho(T-s)}c(X^{\pi,\xi}_{s-},s,\xi_{s-}) \circ \dd \xi_s \right]< +\infty.
    \end{equation}
\end{proposition}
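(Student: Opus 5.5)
The plan is to treat the SDE \eqref{SDE} as an equation driven by the given exogenous processes $\pi$ and $\xi$. With these fixed, $\tilde\mu(\omega,s,x):=\mu(x,s,\xi_s(\omega),\pi_s(\omega))$ and $\tilde\sigma(\omega,s,x):=\sigma(x,s,\xi_s(\omega),\pi_s(\omega))$ are progressively measurable random coefficients. By \eqref{Lipschitz} they are uniformly Lipschitz in $x$ and satisfy $|\tilde\mu|^2+|\tilde\sigma|^2\le K(1+x^2)$, with $K$ independent of $\omega$. The term $-\dd\xi_s$ is an additive, adapted càdlàg process of finite variation, bounded by $|\xi_s-y|\le m$.

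The key step is the substitution $Y_s:=X_s+(\xi_s-y)$, with $X_{t-}=x$ and hence $Y_{t-}=x$. Then $Y$ solves
\[
\dd Y_s=\tilde\mu\big(\omega,s,Y_s-(\xi_s-y)\big)\,\dd s+\tilde\sigma\big(\omega,s,Y_s-(\xi_s-y)\big)\,\dd B_s ,
\]
a continuous-path SDE whose random coefficients are still Lipschitz in $Y$ with constant $K$. Since $|\xi_s-y|\le m$, they also have linear growth: $|\cdot|^2\le K(1+2Y^2+2m^2)$. Existence and pathwise uniqueness of a strong solution then follow from the standard Picard iteration / contraction argument for SDEs with progressively measurable Lipschitz coefficients, for example via Protter's theorem for functional Lipschitz SDEs. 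Setting $X:=Y-(\xi-y)$ gives a unique càdlàg solution of \eqref{SDE}, and uniqueness transfers back because the map $X\leftrightarrow Y$ is a bijection.

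For the moment bound, combine $(a+b+c+d)^2\le 4(\cdots)$ with Cauchy–Schwarz on the $\dd s$-integral and the Burkholder–Davis–Gundy (Doob) inequality on the stochastic integral. With $\phi(s):=\Eb[\sup_{r\in[t,s]}X_r^2]$ this gives
\[
\phi(s)\le C\big(x^2+m^2\big)+C\int_t^s\big(1+\phi(r)\big)\dd r .
\]
Gronwall's lemma then yields $\Eb_{x,t,y}[\sup_{s\in[t,T]}(X_s^{\pi,\xi})^2]<\infty$. A localization via stopping times $\tau_n=\inf\{s:|X_s|\ge n\}$ is needed first, to ensure $\phi$ is finite before applying Gronwall; we then pass to the limit by Fatou's lemma.

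For the cost term, use $|c(x,s,y)|\le K(1+x^2)$ and $\ee^{\rho(T-s)}\le \ee^{\rho T}$. The continuous part is bounded by $K\ee^{\rho T}(1+\sup_r X_r^2)\,\xi^c_T$. For a jump at $s$, the integrand involves $X_{s-}-z$ with $z\in[0,\Delta\xi_s]\subset[0,m]$, so it is bounded by $K\ee^{\rho T}\big(1+2\sup_rX_r^2+2m^2\big)\Delta\xi_s$. Summing, and using that the total variation of $\xi$ on $[t,T]$ is at most $m$, the whole integral is dominated by $K\ee^{\rho T}m\big(1+2m^2+2\sup_{r}X_r^2\big)$, which has finite expectation by the previous step.

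The main obstacle is the jump term $-\dd\xi$ together with the dependence of the coefficients on $\xi_s$. Classical existence theorems are stated for continuous SDEs, so the reduction to $Y$ has to be done carefully: the convention $X_{t-}=x$ with a possible jump at $t$ must be respected, and the random coefficients must be checked to remain progressively measurable. If this reduction is awkward, the fallback is to apply Protter's general existence and uniqueness theorem for SDEs driven by semimartingales, with functional Lipschitz coefficients, directly to $(s,B_s,\xi_s)$.
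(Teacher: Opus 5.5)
Your proof is correct. Its moment step is the paper's own argument: localization by $\tau_n$, Cauchy--Schwarz on the drift, Doob on the stochastic integral, the bound $|\xi-y|\le m$, Gronwall, then Fatou.

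Where you go further is on two claims the paper does not actually prove. First, the paper's appendix never establishes existence or uniqueness of the strong solution. It derives the a priori bound for a solution it takes for granted, and uses Markov's inequality only to rule out explosion. Your substitution $Y=X+(\xi-y)$ reduces \eqref{SDE} to a continuous Brownian SDE whose random coefficients are $K$-Lipschitz with linear growth uniform in $\omega$, and this supplies the missing well-posedness. Second, the paper disposes of the cost term with ``follows similarly''. Your domination of the cost integral by $K\ee^{\rho T}m(1+2m^2+2\sup_r X_r^2)$, using $\xi_T-y\le m$, makes that claim explicit. The paper's version is shorter only because it leaves both points out.

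One correction to your references. Protter's functional-Lipschitz theorem requires the coefficient operator to take values in c\`adl\`ag adapted processes, since the integrand there is $F(X)_{s-}$. This fails when $\pi$ is merely progressively measurable, so neither your ``e.g.'' citation nor your fallback applies as stated. Use instead the Picard-iteration theorem for Brownian SDEs with progressively measurable random Lipschitz coefficients; your reduction to a continuous $Y$ is exactly what that theorem needs.
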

\begin{proof}
    See Appendix~\ref{pro2.1}.
\end{proof}

\subsection{Equilibrium Framework: Equilibrium Law and Equilibrium Control Pair}
In this subsection, we propose the equilibrium law and  equilibrium control pair for time-inconsistency control problem \eqref{OP1} in the weak equilibrium sense as follows.
\begin{definition}[Equilibrium law and equilibrium strategy]
\label{equ_law}
    An admissible regular-singular control law $(\Ph,\Xih)$ is called an equilibrium regular-singular control law (abbr.  equilibrium law ) for Problem \eqref{OP1} if the followings (1) and (2) hold. 
    
    (1) For any initial $(x,t,y) \in \Qc$ with $t<T$, there exists an $h_0 \in (0,T-t]$ such that for any admissible regular-singular control pair of perturbations $(u,\eta)\in \Dc'_{[t,t+h)}$ with $h < h_0, \eta_{t-} = y$, it holds 
    \begin{equation}
        \label{equ_J}
        \liminf_{h \downarrow 0} \frac{J(x,t,y;\pi^h,\xi^h) - J(x,t,y;\ph,\xh)}{h} \ge 0,
    \end{equation}
    where the perturbed regular-singular control pair $(\pi^h,\xi^h), \forall h \in (0,h_0)$ is given by
    \begin{equation}
    \begin{aligned}
    \label{pi^h,xi^h}
        \pi_s^h =
    \begin{cases}
         u_s, & \forall s \in [t,t+h),\\
         \pi_s^{X^{u,\eta}_{(t+h)-},t+h,\eta_{(t+h)-};\Ph,\Xih}, & \forall s \in [t+h,T],
    \end{cases} \\
    \xi^h_s =
    \begin{cases}
         \eta_s, & \forall s \in [t,t+h),\\
         \xi_s^{X^{u,\eta}_{(t+h)-},t+h,\eta_{(t+h)-};\Ph,\Xih}, & \forall s \in [t+h,T],
    \end{cases}
    \end{aligned}
     \end{equation}
    and the equilibrium regular-singular control pair $(\ph,\xh)$ is given by $\ph = \pi^{x,t,y;\Ph,\Xih}$ and $\xh:=\xi^{x,t,y;\Ph,\Xih}$.

    (2) For any $(x,t,y) \in \Qc$ with $t=T$, any $(u,\eta) \in \Dc_{[T,T]}$ with $\eta_{T-} = y$, we have $$
    J(x,t,y;u,\eta) - J(x,t,y;\ph,\xh) \ge 0. $$

    We call $J(x,t,y;\Ph,\Xih)=J(x,t,y;\ph,\xh)$ the corresponding equilibrium value function of $(\Ph,\Xih)$.
\end{definition}

\begin{remark}
    The control processes $\pi^{x,t,y;\Pi,\Xi}$ and $\xi^{x,t,y;\Pi,\Xi}$ are generated simultaneously, and the dependence of the state dynamics on the pair $(\Pi,\Xi)$ is highly  coupled. Consequently,  Definition \ref{equ_law} is structurally distinct from that in \textcite{liang2023weak}, where the equilibrium conditions are formulated separately for stopping controls and regular controls.
\end{remark}

\begin{remark}
    In the closed-loop formulation for time-inconsistent stochastic control, the mapping $\Pi: \Qc \rightarrow \Rb$ is termed as the regular control law (see \textcite{bjork2017time} and \textcite{bjork2021time}), while the partition $\Xi = (W^{\Xi}, P^{\Xi})$ of $\Qc$ is termed the singular control law (see \textcite{liang2024equilibria} and \textcite{liang2025stackelberg}).
\end{remark}
\begin{remark}
There is another approach to defining equilibrium for singular control problems in the literature, as adopted in \textcite{dai2024dynamic} and \textcite{cao2026equilibrium}. Recently, \textcite{liang2026timeconsistent} extended this approach in an annuitization and asset allocation problem where both regular and singular control are involved. However, their definition is formulated directly on the singular control process itself, which may not be appropriate for problems where the singular control variable also appears as a state variable in the dynamics or the objective, i.e., where not only $d\xi_t$ but also $\xi_t$ matters. This is precisely the case in this paper, as well as in  \textcite{liang2024equilibria} and  \textcite{liang2026timeconsistent}. In such settings, applying the direct definition leads to the issue that $X^{\pi^{h},\xi^{h}}_{r}\neq X^{\hat{\pi},\hat{\xi}}_{r}$ for $r>t+h$; consequently, the required relation  $J(X_{t+h},t+h,\xi_{(t+h)-};\pi^{h},\xi^{h})=J(X_{t+h},t+h,\xi_{(t+h)-};\ph,\xh)$ in the verification proof may fail. For problems of this type, the singular control law approach provides a more rigorous foundation for characterizing equilibrium.
\end{remark}

\section{Sufficient Conditions}
\label{verfication}
The objective of this section is to establish a verification theorem for the equilibrium of Problem \eqref{OP1}. The verification theorem provides a set of sufficient conditions under which a candidate value function and the associated control law constitute an equilibrium. This result serves as the main analytical tool for constructing explicit equilibrium solutions in the subsequent sections.

\begin{theorem}[Verification Theorem]
\label{verification thm}
Define the infinitesimal operators
\begin{equation}
    \begin{cases}
        (\Ac^{\pi} \varphi)(x,t,y) &:= \varphi_t(x,t,y)+\mu(x,t,y,\pi) \varphi_x(x,t,y)+\frac{1}{2}\sigma^2(x,t,y,\pi)\varphi_{xx}(x,t,y),\,\, \forall \varphi: \Qc \rightarrow \Rb, \\
        (\Ac^{\Pi} \varphi)(x,t,y) &:= (\Ac^{\Pi(x,t,y)} \varphi)(x,t,y), \,\, \forall \varphi:\Qc \rightarrow \Rb. 
    \end{cases}
\end{equation}
Given two functions $V$ and $g$, define a regular-control law $\Ph$ by 
\begin{equation}
\label{Ph_def}
    \begin{aligned}
        \Ph(x,t,y) = \argmin_{\pi\in U_1} \{(\Ac^{\pi} V)(x,t,y) + \sigma^2(x,t,y,\pi)g_x^2(x,t,y)\},\,\, \forall (x,t,y) \in \Qc,
    \end{aligned}
\end{equation}
and a singular control law $\Xih=(W^{\Xih} , P^{\Xih}     )$ by 
    \begin{align}
        W^{\Xih} & := \{(x,t,y) \in \Qc: \theta \ee^{\rho(T-t)}c(x,t,y) - V_x(x,t,y)+V_y(x,t,y)>0 \text{ or } y = m\}, \label{W}\\ 
        P^{\Xih} & := \{(x,t,y) \in \Qc: \theta \ee^{\rho(T-t)}c(x,t,y) - V_x(x,t,y)+V_y(x,t,y)=0, y < m \}.\label{P} 
    \end{align}
Assume the following conditions hold:
\vspace{2mm}

(1) $V,g \in C^{2,1,1}(\{(x,t,y) \in \Qc:t<T\})\cap C(\Qc)$.
\vspace{2mm}

(2) For all $(x,t,y) \in \Qc$, $V$ and $g$ solve the extended HJB systerms:
  \! \!\!\!  \begin{align}
&\min \bigl\{ \min_{\pi\in U_1} \{(\Ac^{\pi} V)(x,t,y)\!\! + \! \gamma \sigma^2(x,t,y,\pi) g_x^2(x,t,y)\},
        \theta \ee^{\rho(T-t)}c(x,t,y) \!-\! V_x(x,t,y) \!+\! V_y(x,t,y)\bigl\}  = 0,  \label{min}\\
& V(x,T,y) = x-    \max_{r \in [0,m-y]} \left (r-\theta \int_0^{r}c(x-z,T,y+z) \dd z \right),  \label{VT}\\
& (\Ac^{\Ph}g)(x,t,y) = 0, \, \forall (x,t,y) \in \overline{W^{\Xih}}, \, t<T,  \label{gW}\\
 &       g_x(x,t,y) = g_y(x,t,y),\, \forall (x,t,y) \in P^{\Xih}, \, t<T, \label{gP}\\
    &    g(x,T,y) = x- \sup \left(\argmax_{r \in [0,m-y]} \left (r-\theta \int_0^{r}c(x-z,T,y+z) \dd z \right)\right),\label{gT}
    \end{align}
   where $\sup A$  denotes the supremum of the set $A$.
    
(3) $(\Ph,\Xih)$ is an admissible control law.
\vspace{2mm}

(4) For any $(x,t,y) \in \Qc$, there exists an $h_0>0$, such that for any admissible regular-singular control pair of perturbations $(u,\eta) \in \Dc'_{[t,t+h)}, \forall h \in (0,h_0)$, the corresponding  perturbed regular-singular control pair, defined in \eqref{pi^h,xi^h}, still denote by $(u,\eta)$, for functions $\varphi = V,\ g \ g^2, \ \phi = \Ac^ug, \ g_y-g_x,\  g$, it holds that 
\begin{equation}
    \label{finite}
    \Eb_{x,t,y}[\sup_{s\in [t,t+h)}|\phi(X^{u,\eta}_s,s,\eta_s)|] + \Eb_{x,t,y}\left[ \int_t^{T} \varphi_x^2(X^{u,\eta}_s,s,\eta_s) \dd s\right] <+\infty.
\end{equation}
Then $(\Ph,\Xih)$ is an equilibrium regular-singular control law and $V$ is the corresponding equilibrium value function. Moreover, $g$ has the probabilistic interpretation 
\begin{equation}
    g(x,t,y) = \Eb_{x,t,y}\left[X^{x,t,y;\Ph,\Xih}\right].
\end{equation}
\end{theorem}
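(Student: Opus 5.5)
We follow the standard route for time-inconsistent mean-variance verification theorems (Björk–Murgoci type), adapted to the singular component as in \textcite{liang2024equilibria}. The first step is to establish the two probabilistic representations
\[
g(x,t,y)=\Eb_{x,t,y}\bigl[X_T^{\Ph,\Xih}\bigr],\qquad V(x,t,y)=J(x,t,y;\Ph,\Xih).
\]
For $g$ we apply It\^o's formula for semimartingales with jumps to $g(X_s^{\Ph,\Xih},s,\xi_s^{\Ph,\Xih})$ on $[t,T]$. This produces four pieces.
\begin{itemize}
\item The $\dd s$-term is $(\Ac^{\Ph}g)$, which vanishes on $\overline{W^{\Xih}}$ by \eqref{gW}; the state stays there for $s>t$ by the Skorohod condition.
\item The continuous singular part contributes $(g_y-g_x)\,\dd\xi^c$. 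It vanishes because $\xi$ only increases on $P^{\Xih}$, where \eqref{gP} holds.
\item Jumps contribute $\int_0^{\Delta\xi}(g_y-g_x)(X_{s-}-z,s,\xi_{s-}+z)\,\dd z$. This is zero because a jump moves along the segment inside $P^{\Xih}$, which we deduce from the reflection structure of \eqref{xi}.
\item The stochastic integral is a true martingale by the square-integrability in \eqref{finite}.
\end{itemize}
Taking expectations and using the terminal condition \eqref{gT} gives the formula for $g$. Here Assumption~\ref{asm:tie-breaking} matches the "$\sup\argmax$" in \eqref{gT} with the terminal jump chosen by the law.

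For $V$ we apply It\^o's formula in the same way. On $W^{\Xih}$, the definition \eqref{Ph_def} together with \eqref{min} gives $\Ac^{\Ph}V=-\gamma\sigma^2 g_x^2$. On $P^{\Xih}$ we have $V_x-V_y=\theta\ee^{\rho(T-t)}c$, so the singular terms reproduce exactly the cost integral $\circ\,\dd\xi$, in both its continuous and its jump parts. This yields
\[
\Eb\bigl[V(X_T,T,\xi_T)\bigr]=V(x,t,y)-\gamma\,\Eb\!\int_t^T\sigma^2g_x^2\,\dd s-\theta\,\Eb\!\int_t^T c\,\ee^{\rho(T-s)}\circ\dd\xi .
\]
Since $g(X_s,s,\xi_s)$ is a martingale, It\^o applied to $g^2$ gives $\operatorname{Var}[X_T]=\Eb\int_t^T\sigma^2g_x^2\,\dd s$, with the singular contributions cancelling via \eqref{gP}. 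Comparing \eqref{VT} with \eqref{gT} evaluated at the chosen terminal jump then gives $V=J(\cdot;\Ph,\Xih)$.

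For the equilibrium inequality we fix $(u,\eta)\in\Dc'_{[t,t+h)}$ and form the perturbed pair \eqref{pi^h,xi^h}. After time $t+h$ the law $(\Ph,\Xih)$ is used, so the flow property and the representations above give the decomposition
\[
J(\pi^h,\xi^h)=\Eb\bigl[V(X_{t+h},\cdot)\bigr]+\gamma\Bigl(\Eb\bigl[g^2(X_{t+h},\cdot)\bigr]-\bigl(\Eb\, g(X_{t+h},\cdot)\bigr)^2\Bigr)+\theta\,\Eb[\text{cost on }[t,t+h)].
\]
Apply It\^o on $[t,t+h)$ to $V$, $g$ and $g^2$ along $(X^{u,\eta},\eta)$, using \eqref{finite} to kill the martingale terms and to justify dominated convergence. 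The difference $J(\pi^h,\xi^h)-V(x,t,y)$ then becomes
\[
\Eb\!\int_t^{t+h}\bigl[\Ac^{u}V+\gamma\sigma^2g_x^2\bigr]\dd s
+\Eb\!\int_{[t,t+h)}\bigl(\theta\ee^{\rho(T-s)}c-V_x+V_y\bigr)\circ\dd\eta
-\gamma\bigl(\Eb\,g(X_{t+h},\cdot)-g(x,t,y)\bigr)^2 .
\]
Both integrands are nonnegative by \eqref{min}, so the first two terms are $\ge0$. For the last term we write $\Eb\,g(X_{t+h},\cdot)-g=\Eb\int\Ac^u g\,\dd s+\Eb\int(g_y-g_x)\circ\dd\eta$. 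The growth condition \eqref{pert_xi} bounds $\eta_{(t+h)-}-\eta_t\le Mh$, and together with \eqref{finite} this shows the difference is $O(h)$. The squared term is therefore $O(h^2)=o(h)$, which gives \eqref{equ_J}. Condition (2) of Definition~\ref{equ_law} at $t=T$ is immediate from \eqref{VT} and \eqref{gT}.

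The main obstacle is the singular control at the initial time. An initial jump $\Delta\eta_t$ at $s=t$ is not controlled by \eqref{pert_xi}, so the $O(h)$ estimate above fails for it, and the variance correction must be handled separately. The plan is to treat the jump contribution exactly. An instantaneous jump at $t$ changes the mean by $\int(g_y-g_x)\,\dd z$, and this is not small. We would therefore expand directly: compare the post-jump point with $(x,t,y)$ via \eqref{min} in the direction $(-1,+1)$, and use that $V-\gamma(\cdot)$ satisfies the appropriate convexity/inequality along jump segments. If this does not close, we would fall back to interpreting the definition so that \eqref{pert_xi} constrains the jump at $t$ as well, i.e.\ $\eta_t-y\le Mh$. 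In that case the argument above goes through verbatim.
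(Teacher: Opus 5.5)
Your representations of $g$ and $V$, the identity $\operatorname{Var}_{x,t,y}(\Xh_T)=\Eb_{x,t,y}\int_t^T\sigma^2g_x^2\,\dd s$, and the conditional decomposition of $J(x,t,y;\pi^h,\xi^h)$ follow the paper's route. The gap is the one you flag yourself. Condition \eqref{pert_xi} only constrains $\eta_{(t+h')-}-\eta_t$, so an initial jump $\Delta\eta_t\in[0,m-y]$ of arbitrary size is an admissible perturbation, and your proof does not cover it. The ``convexity of $V-\gamma(\cdot)$ along jump segments'' you propose does not follow from the hypotheses. Your fallback $\eta_t-y\le Mh$ would only prove equilibrium against a strictly smaller class of perturbations than Definition~\ref{equ_law} requires.

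The obstacle is in fact created by your expansion. Your formula for $J(x,t,y;\pi^h,\xi^h)-V(x,t,y)$ omits the cross terms
\[
2\gamma\,\Eb_{x,t,y}\int_t^{t+h}\bigl(g-g(x,t,y)\bigr)\Ac^u g\,\dd s
\quad\text{and}\quad
2\gamma\,\Eb_{x,t,y}\int_{[t,t+h)}\bigl(g-g(x,t,y)\bigr)(g_y-g_x)\circ\dd\eta .
\]
Without an initial jump these are $o(h)$, by right-continuity at $t$, \eqref{pert_xi}, and dominated convergence via \eqref{finite}. This is the final limit argument in the paper, and you need it too.

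Along an initial jump segment, however, the second cross term equals $\gamma\bigl(g(x-\Delta\eta_t,t,y+\Delta\eta_t)-g(x,t,y)\bigr)^2$, because $\frac{\dd}{\dd z}g(x-z,t,y+z)=(g_y-g_x)(x-z,t,y+z)$. It therefore cancels the $O(1)$ negative term you were worried about, and the remaining $O(h)$ interaction between the jump and the increments on $(t,t+h)$ cancels the same way.

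The clean way to organize this, as the paper does, is to expand around the post-jump point. The jump $\Delta\eta_t$ is $\Fc_{t-}$-measurable, so the paper treats it as a constant under $\Eb_{x,t,y}$. Set $Z=(X^{u,\eta}_{(t+h)-},t+h,\eta_{(t+h)-})$. The variance term $\Eb[g^2(Z)]-(\Eb\,g(Z))^2$ can then be computed with reference value $g(x-\Delta\eta_t,t,y+\Delta\eta_t)$, using It\^o's formula on $(t,t+h)$ only, where \eqref{pert_xi} applies. This gives $\gamma\,\Eb\int_t^{t+h}\sigma^2g_x^2\,\dd s+o(h)$.

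The jump then affects only the mean part, and your second integral over $[t,t+h)$, which includes $s=t$, already handles it. This is the paper's inequality $V(x,t,y)\le V(x-\Delta\eta_t,t,y+\Delta\eta_t)+\int_0^{\Delta\eta_t}\theta\ee^{\rho(T-t)}c(x-z,t,y+z)\,\dd z$, which follows from the second component of \eqref{min} being nonnegative. With this change your argument closes, with no extra convexity and no restriction of the definition.
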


\begin{proof}
Fix any $(x,t,y) \in \Qc$, for simplicity, we denote $\Xh := X^{x,t,y;\Ph,\Xih}$, and $(\ph, \xh)$ is the  admissible control pair corresponding to $(\Ph,\Xih)$.
This proof consists of three steps:

\textbf{Step 1.}{ \bf  We show that $g$ has the probabilistic interpretation}.\ \

Applying It\^o's formula to $g$ yields
 {\small    \begin{equation}
    \label{1}
        \begin{aligned}
             g(\Xh_{T-},T,\xh_{T-}) & =  g(x,t,y) + \int_t^T (\Ac^{\Ph}g)(\Xh_{s-},s,\xh_{s-}) \dd s \\
            &+ \int_t^T \sigma(\Xh_{s-},s,\xh_{s-},\ph(\Xh_{s-},s,\xh_{s-})) g_x(\Xh_{s-},s,\xh_{s-}) \dd B_s + \int_t^T\big[g_y(\Xh_{s-},s,\xh_{s-})-g_x(\Xh_{s-},s,\xh_{s-})\big]\dd \xh_s^c \\
            &+ \sum_{s\in[t,T)}\int_0^{\Delta\xh_s} \big[g_y(\Xh_{s-}-z,s,\xh_{s-}+z)-g_x(\Xh_{s-}-z,s,\xh_{s-}+z)\big] \dd z.
        \end{aligned}
    \end{equation}}
    Using Equation \eqref{gW} and  $(\Xh_{s},s,\xh_{s}) \in \overline{W^{\Xih}},a.s., \forall s \in (t,T]$, we have that the first integral in \eqref{1} equals $0$. Using Equation \eqref{gP} and the fact that $(\Xh_{s-}-z,s,\xh_{s-}+z) \in P^{\Xih},a.s.,\  \forall z  \ 
    \in [0,\Delta\xh_s], s \in [t,T)$ if $\Delta\xh_s>0$
    , the last two integrals on the right-hand side of \eqref{1} are equal to zero. Note that the second integral in \eqref{1} is a martingale under Condition (4), taking expectation on both sides of \eqref{1}, we obtain
    $$  \Eb_{x,t,y}[g(\Xh_{T-},T,\xh_{T-})] = g(x,t,y).$$
    Using \eqref{gT} yields
    \begin{equation}
    \label{g}
        g(x,t,y) = \Eb_{x,t,y}\left[\Xh_{T-}- \sup \left (\argmax_{r \in [0,m-\xh_{T-}]} \left(r-\theta \int_0^{r}c(\Xh_{T-}-z,T,\xh_{T-}+z) \dd z \right) \right)\right] = \Eb_{x,t,y}[\Xh_T].
    \end{equation}

    \textbf{Step 2.} { \bf We show that 
    \begin{equation}
    \label{VJ}
        V(x,t,y) = J(x,t,y;\ph,\xh).
    \end{equation}}

We first analyze $\operatorname{Var}_{x,t,y}(\Xh_T)$. 
\begin{equation}
\label{var}
    \begin{aligned}
       \operatorname{Var}_{x,t,y}(\Xh_T) & = \Eb_{x,t,y}(\Xh_T^2) - (\Eb_{x,t,y}\Xh_T)^2 = \Eb_{x,t,y}[g^2(\Xh_T,T,\xh_T)] - g^2(x,t,y) \\
       & = \Eb_{x,t,y} \biggl\{ \int_t^T (\Ac^{\Ph} g^2)(\Xh_{s-},s,\xh_{s-}) \dd s + \int_t^T \big[(2g g_y)(\Xh_{s-},s,\xh_{s-})-(2gg_x)(\Xh_{s-},s,\xh_{s-})\big] \dd \xh^c_{s}  \\
       &\quad \quad  + \sum_{s\in[t,T]} \int_0^{\Delta\xh_s} \big[(2g g_y)(\Xh_{s-}-z,s,\xh_{s-}+z)-(2g g_x)(\Xh_{s-}-z,s,\xh_{s-}+z)\big]\dd z \biggl\} \\
       & = \Eb_{x,t,y} \left[ \int_t^T (\Ac^{\Ph}g^2)(\Xh_{s-},s,\xh_{s-}) \dd s\right],
    \end{aligned}
\end{equation}
where the first equality follows from \eqref{g}, the second equality follows from applying It\^o's formula to $g^2$ on $[t,T]$ and taking expectations, and the last equality follows from  the fact that $(\Xh_{s-}-z,s,\xh_{s-}+z) \in P^{\Xih},a.s., \forall z 
    \in [0,\Delta\xh_s], s \in [t,T)$ if $\Delta\xh_s>0$ , Equation \eqref{gP} and Condition (4).

Observe that $$(\Ac^{\Ph}g^2)(x,t,y) = (2g\Ac^{\Ph}g)(x,t,y) + \sigma^2(x,t,y,\ph(x,t,y)) g_x^2(x,t,y),$$
using Equations \eqref{gW}, \eqref{var},  and $(\Xh_{s},s,\xh_{s}) \in \overline{W^{\Xih}},a.s.$, we have
\begin{equation}
    \label{var2}
\operatorname{Var}_{x,t,y}(\Xh_T)=\Eb_{x,t,y} \left[ \int_t^T \sigma^2(\Xh_{s-},s,\xh_{s-},\ph(\Xh_{s-},s,\xh_{s-})) g_x^2(\Xh_{s-},s,\xh_{s-}) \dd s\right].
\end{equation}
Similar to \textbf{Step 1}, applying It\^o's formular to $V$ and taking expectations, we have 
\begin{equation}
    \begin{aligned}
         \Eb_{x,t,y}[V(\Xh_T,T,\xh_T)] & =  V(x,t,y) + \Eb_{x,t,y} \Biggl[\int_t^T(\Ac^{\Ph}V)(\Xh_{s-},s,\xh_{s-}) \dd s \\ 
        &  + \int_t^T\big[V_y(\Xh_{s-},s,\xh_{s-})-V_x(\Xh_{s-},s,\xh_{s-})\big]\dd \xh_s^c \\
        &+ \sum_{s\in[t,T]}\int_0^{\Delta\xh_s} \big[V_y(\Xh_{s-}-z,s,\xh_{s-}+z)-V_x(\Xh_{s-}-z,s,\xh_{s-}+z)\big] \dd z\Biggl]
        \end{aligned}
        \end{equation}
        \begin{equation}
        \label{2}
        \begin{aligned}
        & = V(x,t,y) + \Eb_{x,t,y} \Biggl[- \int_t^T \theta \ee^{\rho(T-s)}c(\Xh_{s-},s,\xh_{s-})\dd \xh_s^c - \sum_{s\in[t,T]}\int_0^{\Delta \xh_s}\theta \ee^{\rho(T-s)}c(\Xh_{s-}-z,s,\xh_{s-}+z) \dd z\\
        & - \gamma \int_t^T \sigma^2(\Xh_{s-},s,\xh_{s-},\ph(\Xh_{s-},s,\xh_{s-})) g_x^2(\Xh_{s-},s,\xh_{s-}) \dd s   \Biggl]\\
        & = V(x,t,y) - \gamma \operatorname{Var}_{x,t,y}(\Xh_T) - \Eb_{x,t,y} \Biggl[\int_t^T \theta \ee^{\rho(T-s)}c(\Xh_{s-},s,\xh_{s-}) \circ \dd \xh_s \Biggl],
    \end{aligned}
\end{equation}
where the second equality follows from Condition (4), Equations \eqref{min} and the definition of $W^{\Xih}$ as in \eqref{W}. The last equality follows from \eqref{var2}. 

Using Equalities \eqref{VT} and \eqref{2}, we have 
\begin{align}
\label{3a}
    V(x,t,y) = &  \Eb_{x,t,y}(\Xh_T)+ \gamma \operatorname{Var}_{x,t,y}(\Xh_T) +\Eb_{x,t,y} \Biggl[\int_t^T \theta \ee^{\rho(T-s)}c(\Xh_{s-},s,\xh_{s-}) \circ \dd \xh_s \Biggl]\nonumber \\
    &- \max_{r \in [0,m-\xh_T]} \left (r-\theta \int_0^{r}c(\Xh_T-z,T,\xh_T+z) \dd z \right).
\end{align}
Note that if the singular control is activated at time $T$, it is immediately driven to its upper bound of $\argmax\limits_{r \in [0,m-\Xh_{T-}]} \left (r-\theta \int_0^{r}c(\Xh_{T-}-z,T,\Xh_{T-}+z) \dd z \right)$, that is, $$\argmax_{r \in [0,m-\Xh_{T}]} \left (r-\theta \int_0^{r}c(\Xh_T-z,T,\xh_{T}+z) \dd z \right)=0.$$ Consequently, the last term of \eqref{3a} equals 0, thereby yielding the desired result.
\vskip 3pt
\textbf{Step 3.} {\bf We prove that $(\Ph,\Xih)$ is an equilibrium singular-regular control law}.

Fix any $(x,t,y) \in \Qc$ and any admissible singular-regular pair of perturbation $(u,\eta) \in \Dc'_{[t,t+h)}, h < h_0$, where $h_0$ is defined in Condition (4). We first analyze $J(x,t,y;u,\eta)$ as follows.

Using the conditional expectation formula, we have 
\begin{eqnarray}
     J(x,t,y;u,\eta)  &=& \Eb_{x,t,y}[g(X^{u,\eta}_{(t+h)-},t+h,\eta_{(t+h)-})] + \gamma
 \operatorname{Var}_{x,t,y}(X^{u,\eta}_T) \nonumber\\
    &&+ \theta \Eb_{x,t,y}\left[\int_t^{t+h} \ee^{\rho(T-s)}c(X^{u,\eta}_{s-},s,\eta_{s-}) \circ \dd\eta_s + \int_{t+h}^T \ee^{\rho(T-s)}c(X^{u,\eta}_{s-},s,\eta_{s-}) \circ \dd \xh_s\right]\nonumber\\
    && = \Eb_{x,t,y}[g(X^{u,\eta}_{(t+h)-},t+h,\eta_{(t+h)-}) + \gamma \operatorname{Var}_{X^{u,\eta}_{(t+h)-},t+h,\eta_{(t+h)-}}(\Xh_T)]\nonumber\\
   && + \gamma \operatorname{Var}_{x,t,y}[\Eb_{X^{u,\eta}_{(t+h)-},t+h,\eta_{(t+h)-}}(\Xh_T)]\nonumber
   \end{eqnarray}
   \begin{eqnarray}
   \label{2.5}
    &&+ \theta \Eb_{x,t,y}\left[\int_t^{t+h} \ee^{\rho(T-s)}c(X^{u,\eta}_{s-},s,\eta_{s-}) \circ \dd\eta_s + \int_{t+h}^T \ee^{\rho(T-s)}c(X^{u,\eta}_{s-},s,\eta_{s-}) \circ \dd \xh_s\right]\nonumber\\
&&  = \Eb_{x,t,y}[V(X^{u,\eta}_{(t+h)-},t+h,\eta_{(t+h)-})] + \theta \Eb_{x,t,y}\left[\int_t^{t+h} \ee^{\rho(T-s)}c(X^{u,\eta}_{s-},s,\eta_{s-}) \circ \dd\eta_s\right]\nonumber\\
       &&  + \gamma \Eb_{x,t,y}[g^2(X^{u,\eta}_{(t+h)-},t+h,\eta_{(t+h)-})-g^2(x-\Delta\eta_t,t,y+\Delta\eta_t)] \nonumber\\
        &&+ \gamma [(\Eb_{x,t,y}[g(X^{u,\eta}_{(t+h)-},t+h,\eta_{(t+h)-})])^2 -g^2(x-\Delta\eta_t,t,y+\Delta\eta_t)].
\end{eqnarray}
Applying It\^o's formula to $V$, $g$ and  $g^2$ respectively, and using Condition (4),  we obtain the followings:
\begin{equation}
\label{3}
    \begin{aligned}
       & \Eb_{x,t,y}[V(X^{u,\eta}_{(t+h)-},t+h,\eta_{(t+h)-})] - V(x-\Delta\eta_t,t,y+\Delta\eta_t) \\
    =&   \Eb_{x,t,y}\biggl[\int_t^{t+h} (\Ac^u V)(X^{u,\eta}_{s-},s,\eta_{s-})\dd s + \int_t^{t+h} [V_y(X^{u,\eta}_{s-},s,\eta_{s-}) - V_x(X^{u,\eta}_{s-},s,\eta_{s-})]\dd \eta^c_s  \\
    & \ \ \ \ + \sum_{t<s<t+h}\int_0^{\Delta\eta_s}[V_y(X^{u,\eta}_{s-}-z,s,\eta_{s-}+z) - V_x(X^{u,\eta}_{s-}-z,s,\eta_{s-}+z)\dd z]\biggl],
    \end{aligned}
\end{equation}

\begin{equation}
\label{4}
    \begin{aligned}
        &\Eb_{x,t,y}[g^2(X^{u,\eta}_{t+h-},t+h,\eta_{t+h-})] - g^2(x-\Delta\eta_t,t,y+\Delta\eta_t) \\
        =&  \Eb_{x,t,y} \biggl[\int_t^{t+h} (\Ac^u g^2)(X^{u,\eta}_{s-},s,\eta_{s-}) \biggl] + \int_t^{t+h}\left[(2g g_y)(X^{u,\eta}_{s-},s,\eta_{s-})-(2g g_x)(X^{u,\eta}_{s-},s,\eta_{s-})\right] \dd \eta^c_s \\
         & \ \ \ \ + \sum_{t<s<t+h}\int_0^{\Delta\eta_s} \left[(2g g_y)(X^{u,\eta}_{s-}-z,s,\eta_{s-}+z)-(2g g_x)(X^{u,\eta}_{s-}-z,s,\eta_{s-}+z)\right] \dd z,   \end{aligned}
\end{equation}
and 
\begin{equation}
    \begin{aligned}
       & \left|\Eb_{x,t,y}[g(X^{u,\eta}_{t+h-},t+h,\eta_{t+h-})]-g(x-\Delta\eta_t,t,y+\Delta\eta_t)\right|\\
      =& \left| \Eb_{x,t,y}\biggl[ \int_t^{t+h}(\Ac^u g)(X^{u,\eta}_{s-},s,\eta_{s-})\dd s + \int_t^{t+h} [g_y(X^{u,\eta}_{s-},s,\eta_{s-})-g_x(X^{u,\eta}_{s-},s,\eta_{s-})]\dd \eta_s^c\right.\\
      &\ \ \ \ +\left.\sum_{t<s<t+h}\int_0^{\Delta \eta_s} [g_y(X^{u,\eta}_{s-}-z,s,\eta_{s-}+z)-g_x(X^{u,\eta}_{s-}-z,s,\eta_{s-}+z)]dz\biggl] \right| \\
      \le&  \Eb_{x,t,y}\biggl[ \sup_{s\in (t,t+h)} |(\Ac^u g)(X^{u,\eta}_{s-},s,\eta_{s-})|h + \sup_{s\in (t,t+h)}| [g_y(X^{u,\eta}_{s-},s,\eta_{s-})-g_x(X^{u,\eta}_{s-},s,\eta_{s-})]|Mh \biggl].
    \end{aligned}
\end{equation}
Then 
\begin{equation}
\label{5}
    \begin{aligned}
       & (\Eb_{x,t,y}[g(X^{u,\eta}_{t+h-},t+h,\eta_{t+h-})])^2-g^2(x-\Delta\eta_t,t,y+\Delta\eta_t)\\
       =&\Eb_{x,t,y}[g(X^{u,\eta}_{t+h-},t+h,\eta_{t+h-})-g(x-\Delta\eta_t,t,y+\Delta\eta_t)]^2 \\
       & \ \ \ \ + 2g(x-\Delta\eta_t,t,y+\Delta\eta_t)\Eb_{x,t,y}[g(X^{u,\eta}_{t+h-},t+h,\eta_{t+h-})-g(x-\Delta\eta_t,t,y+\Delta\eta_t)]\\
        =& 2g(x-\Delta\eta_t,t,y+\Delta\eta_t)\Eb_{x,t,y}\bigg[ \int_t^{t+h}(\Ac^u g)(X^{u,\eta}_{s-},s,\eta_{s-})\dd s\\
       &\ \ \ \ + \int_t^{t+h} [g_y(X^{u,\eta}_{s-},s,\eta_{s-})-g_x(X^{u,\eta}_{s-},s,\eta_{s-})]\dd \eta_s^c)\\
      &\ \ \ \ +\sum_{t<s<t+h}\int_0^{\Delta \eta_s} [g_y(X^{u,\eta}_{s-}-z,s,\eta_{s-}+z)-g_x(X^{u,\eta}_{s-}-z,s,\eta_{s-}+z)]dz\bigg] +o(h).
    \end{aligned}
\end{equation}
Substituting \eqref{3}, \eqref{4} and \eqref{5} into \eqref{2.5} and using  
\begin{align*}
J(x,t,y;\ph,\xh) &= V(x,t,y)  = V(x-\Delta\eta_t,t,y+\Delta\eta_t)\\
&- \int_0^{\Delta \eta_t}[V_y(x-u,t,y+u)-V_x(x-u,t,y+u)]\dd u \\
& \le V(x-\Delta\eta_t,t,y+\Delta\eta_t) + \int_0^{\Delta \eta_t}\theta c(x-z,t,y+z)\ee^{\rho(T-t)} \dd z, 
\end{align*}
yield
{\small 
\begin{eqnarray}\label{J-J}
 &&J(x,t,y;u,\eta)  J(x,t,y;\ph,\xh)\nonumber\\
     &\ge&  \Eb_{x,t,y} \biggl\{ \int_t^{t+h}  \big[(\Ac^u V)(X^{u,\eta}_{s-},s,\eta_{s-})+ \gamma(\Ac^u g^2)(X^{u,\eta}_{s-},s,\eta_{s-})- \gamma (2g\Ac^u g)(X^{u,\eta}_{s-},s,\eta_{s-})\big] \dd s \nonumber\\
      && + \int_t^{t+h} \big[\theta c(X^{u,\eta}_{s-},s,\eta_{s-}) \ee^{\rho(T-s)}+V_y(X^{u,\eta}_{s-},s,\eta_{s-}) - V_x(X^{u,\eta}_{s-},s,\eta_{s-})\big]\dd \eta^c_s \nonumber\\
      & &+ \sum_{t<s<t+h}\int_0^{\Delta\eta_s}\big[\theta c(X^{u,\eta}_{s-}-z,s,\eta_{s-}+z) \ee^{\rho(T-s)} + V_y(X^{u,\eta}_{s-}-z,s,\eta_{s-}+z)\nonumber\\
      &&- V_x(X^{u,\eta}_{s-}-z,s,\eta_{s-}+z)\big]\dd z +  \gamma \int_t^{t+h}\big[(2gg_y)(X^{u,\eta}_{s-},s,\eta_{s-})-(2gg_x)(X^{u,\eta}_{s-},s,\eta_{s-})\big] \dd \eta^c_s \nonumber\\
       & &+\gamma\sum_{t<s<t+h}\int_0^{\Delta\eta_s} \left[(2g g_y)(X^{u,\eta}_{s-}-z,s,\eta_{s-}+z)-(2g g_x)(X^{u,\eta}_{s-}-z,s,\eta_{s-}+z)\right] \dd z\nonumber\\
       && -2  \gamma \Eb_{x,t,y}\biggl\{ \int_t^{t+h}[g(x-\Delta\eta_t,t,y+\Delta\eta_t) - g(X^{u,\eta}_{s-},s,\eta_{s-})](\Ac^u g)(X^{u,\eta}_{s-},s,\eta_{s-})\dd s \nonumber\\
          & &+ g(x-\Delta\eta_t,t,y+\Delta\eta_t)\biggl[  \int_t^{t+h} [g_y(X^{u,\eta}_{s-},s,\eta_{s-})-g_x(X^{u,\eta}_{s-},s,\eta_{s-})]\dd \eta_s^c\nonumber\\
      &&+\sum_{t<s<t+h}\int_0^{\Delta \eta_s} [g_y(X^{u,\eta}_{s-}-z,s,\eta_{s-}+z)-g_x(X^{u,\eta}_{s-}-z,s,\eta_{s-}+z)]\dd z\biggl] \biggl\}+o(h).
\end{eqnarray}}
Because 
\begin{equation}
    \begin{aligned}
        (\Ac^u V)(x,t,y)+\gamma(\Ac^u g^2-2g\Ac^u g)(x,t,y)  & = (\Ac^u V)(x,t,y) + \gamma \sigma^2(x,t,y,u(x,t,y))g^2_x(x,t,y)\\
        & \ge
(\Ac^{\Ph}V)(x,t,y)+ \gamma \sigma^2(x,t,y,\ph(x,t,y))g^2_x(x,t,y) \ge 0, 
    \end{aligned}
\end{equation}
using \eqref{min}, we obtain 
\begin{equation*}
\begin{aligned}
    & \limsup_{h \downarrow 0}\frac{1}{h}[J(x,t,y;u,\eta) - J(x,t,y;\ph,\xh)]\\
    &\ge  \limsup_{h \downarrow 0}2 \gamma\Eb_{x,t,y} \biggl[ \frac{1}{h}\int_t^{t+h}[g(X^{u,\eta}_{s-},s,\eta_{s-})-g(x-\Delta\eta_t,t,y+\Delta\eta_t)][g_y(X^{u,\eta}_{s-},s,\eta_{s-})-g_x(X^{u,\eta}_{s-},s,\eta_{s-})]\dd \eta_s^c \\
    & +\frac{1}{h}\sum_{t<s<t+h}\int_t^{t+h}[g(X^{u,\eta}_{s-},s,\eta_{s-})-g(x-\Delta\eta_t,t,y+\Delta\eta_t)][g_y(X^{u,\eta}_{s-}-z,s,\eta_{s-}+z)\\
    &-g_x(X^{u,\eta}_{s-}-z,s,\eta_{s-}+z)] \dd z - \frac{1}{h}\int_t^{t+h}[g(X^{u,\eta}_{s-},s,\eta_{s-})-g(x-\Delta\eta_t,t,y+\Delta\eta_t)](\Ac^u g)(X^{u,\eta}_{s-},s,\eta_{s-}) \dd s\biggl]\\
    &\ge  - 2 \gamma \Eb_{x,t,y} \biggl[\lim_{h \downarrow 0}\sup_{t<s<t+h}\left|[g(X^{u,\eta}_{s-},s,\eta_{s-})-g(x-\Delta\eta_t,t,y+\Delta\eta_t)][g_y(X^{u,\eta}_{s-},s,\eta_{s-})-g_x(X^{u,\eta}_{s-},s,\eta_{s-})]\right| \\
    &\times \limsup_{h \downarrow 0}\frac{1}{h}\int_t^{t+h}\dd\eta_s + \lim_{h \downarrow 0}\sup_{t<s<t+h}|[g(X^{u,\eta}_{s-},s,\eta_{s-})\\
    &-g(x-\Delta\eta_t,t,y+\Delta\eta_t)](\Ac^u g)(X^{u,\eta}_{s-},s,\eta_{s-})|\times \limsup_{h \downarrow 0} \frac{1}{h} \int_t^{t+h}\dd s\biggl]=0,
\end{aligned}
\end{equation*}\noindent
where the second inequality follows from Condition (4) and Dominated convergence theorem, the last equality follows from the fact that $X^{u,\eta}$ is a c\`adl\`ag process and that $\eta_{t+h-}-\eta_t \le Mh$ almost surely. Thus, $(\Ph,\Xih)$ is an equilibrium singular-regular control law.
\end{proof}

\begin{remark}
     Conditions (1) and (2) in Theorem~\ref{verification thm} can be relaxed. Indeed, define $\Qc_T:=\Rb \times [0,T) \times [0,m]$, if 
     $$V,g \in C^{2,1,1}( \overline{W^{\Xih}} \cap \Qc_T) \cap C^{2,1,1}(P\cap \Qc_T) \cap C^{1,1,1}(\Qc_T) \cap C(\Qc),$$ and  \eqref{min} holds separately in the waiting region $\overline{W^{\Xih}}$ and the action region $P^{\Xih}$. By using the change-of-variable formula with local time on surfaces (see Theorem 3.2 of \textcite{martin2010seminaire}), the only modification required in the proof is to replace the second-order derivative on the free boundary with the average of its left- and right-hand second-order derivatives due to $V,g \in C^{1,1,1}(\Qc)$. Moreover, when the free boundary exhibits certain special features, the smoothness requirement on the auxiliary function $g$ at the free boundary can be further relaxed; see Theorem~\ref{example 1} for details.
\end{remark}

\section{Necessary Conditions}
\label{necessary}
In Section~\ref{verfication}, we established a verification theorem providing sufficient conditions for equilibrium strategies. In this section, we investigate   the necessary conditions,under which any equilibrium pair must satisfy the associated extended HJB equations, that is, the following theorem.
\begin{theorem}
\label{necessary thm}
    Suppose that $(\Ph,\Xih)$ is an equilibrium regular-singular control law with equilibrium value function $V$. For any $(x,t,y) \in \Qc$, let $\Xh := X^{x,t,y;\Ph,\Xih}$ and $( \ph, \xh) :=( \pi^{x,t,y;\Ph,\Xih}, \xi^{x,t,y;\Ph,\Xih})$ be the state process and the regular-singular control pair, respectively. Define $g(x,t,y) = \Eb_{x,t,y}[\Xh_T]$. Assume the following conditions hold:
    
    (1) $V,g \in C^{2,1,1}(\Qc)$.
    \vspace{2mm}

    (2) For any $(x,t,y) \in \Qc$, there exists an $h_0>0$, such that for any admissible regular-singular control pair of perturbations $(u,\eta) \in \Dc'_{[t,t+h)}, \forall h \in (0,h_0)$, the corresponding admissible pair of controls, defined in \eqref{pi^h,xi^h}, still denote by $(u,\eta)$, for functions $\varphi = V,g,g^2, \phi = \Ac^ug, g_y-g_x, g$, it holds that 
\begin{equation}
    \label{finite2}
    \Eb_{x,t,y}[\sup_{s\in [t,t+h)}|\phi(X^{u,\eta}_s,s,\eta_s)|] + \Eb_{x,t,y}\left[ \int_t^{t+h} \varphi_x^2(X^{u,\eta}_s,s,\eta_s) \dd s\right] <+\infty.
\end{equation}
Then the extended HJB equations in Condition (2) of Theorem~\ref{verification thm} hold and $\Ph$ has the expression in \eqref{Ph_def}.
\end{theorem}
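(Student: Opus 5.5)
The plan is to read the extended HJB system off the equilibrium inequality \eqref{equ_J} by choosing special perturbations. Step 3 of the proof of Theorem~\ref{verification thm} will then be reused in reverse, with the smoothness in Condition (1) and the integrability \eqref{finite2} supplying the It\^o expansions.

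\textbf{Identities along the equilibrium path.} Fix $(x,t,y)$ and write $(\Xh,\xh)$. By the Markov property and uniqueness of the Skorohod problem \eqref{xi}, $g(\Xh_s,s,\xh_s)=\Eb[\Xh_T\mid\Fc_s]$ is a martingale. Applying It\^o's formula to $g$ on short intervals starting in the interior of $W^{\Xih}$, where $\xh$ does not move, and letting the interval shrink gives \eqref{gW} on $W^{\Xih}$, and by continuity on $\overline{W^{\Xih}}$. Starting from a point of $P^{\Xih}$ with $y<m$, the law pushes $\xh$. The jump and $\dd\xh^c$ terms must then have zero expectation for every small step, which yields $g_x=g_y$ along the pushing direction, i.e. \eqref{gP}. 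The same argument applied to $V$, using the Step~2 representation of $V=J$ through the decomposition of $\operatorname{Var}$ in \eqref{var}--\eqref{var2}, gives
\[
(\Ac^{\Ph}V)+\gamma\sigma^2(\cdot,\Ph)g_x^2=0 \text{ on } W^{\Xih}, \qquad \theta \ee^{\rho(T-t)}c-V_x+V_y=0 \text{ on } P^{\Xih}.
\]
The terminal conditions \eqref{VT} and \eqref{gT} come from part (2) of Definition~\ref{equ_law}. At $t=T$ the problem is the static choice of $\Delta\xi_T$, which minimizes $x-r+\theta\int_0^r c\,\dd z$ over $r\in[0,m-y]$, and Assumption~\ref{asm:tie-breaking} selects the largest maximizer.

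\textbf{Inequalities from perturbations.} Two perturbations give the two inequalities in \eqref{min}. \emph{(a) Constant regular perturbation.} Take $\eta\equiv y$ on $[t,t+h)$ and $u_s\equiv\pi\in U_1$. Expand $J(x,t,y;u,\eta)-V(x,t,y)$ exactly as in \eqref{2.5}--\eqref{J-J}, but now as an equality with the $o(h)$ remainder controlled by \eqref{finite2} and dominated convergence. Dividing by $h$ and using \eqref{equ_J} gives $(\Ac^\pi V)+\gamma\sigma^2(\cdot,\pi)g_x^2\ge0$ at $(x,t,y)$. Combined with the equality on $W^{\Xih}$, this proves \eqref{Ph_def} for $\Ph$ there. \emph{(b) Singular perturbation.} Take an initial jump $\Delta\eta_t=\delta$, or a linear push $\eta_s=y+M(s-t)$ satisfying \eqref{pert_xi}, with $u=\Ph$. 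The leading order of the difference is
\[
Mh\,[\theta \ee^{\rho(T-t)}c-V_x+V_y](x,t,y)+o(h).
\]
The $g$-cross terms in \eqref{J-J} vanish in the limit by the c\`adl\`ag argument used at the end of Step 3. Hence $\theta \ee^{\rho(T-t)}c-V_x+V_y\ge0$. Putting (a), (b) and the equalities together yields \eqref{min}: on $W^{\Xih}$ the first term is $0$ and the second is $\ge0$, and on $P^{\Xih}$ the second is $0$ and the first is $\ge0$.

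\textbf{Main obstacle.} The delicate step is the equality part on $P^{\Xih}$, i.e. showing $g_x=g_y$ and $\theta \ee^{\rho(T-t)}c-V_x+V_y=0$ there, together with $\Ph$ attaining the minimum on $P^{\Xih}$. On $P^{\Xih}$ the equilibrium control jumps or reflects, so a one-sided perturbation only gives an inequality. To get the reverse inequality I would compare with the perturbation ``do not intervene on $[t,t+h)$'' ($\eta\equiv y$). For this to be nontrivial, the equilibrium's own jump must be matched to first order, via the identity $V(x,t,y)=V(x-\Delta\xh_t,t,y+\Delta\xh_t)+\int_0^{\Delta\xh_t}\theta \ee^{\rho(T-t)}c\,\dd z$, which is valid along the pushed segment. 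Boundary points of $P^{\Xih}$ would then be handled by continuity of the derivatives of $V,g\in C^{2,1,1}(\Qc)$, with the interior of each region treated first.
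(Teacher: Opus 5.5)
Your proposal follows the paper's proof essentially step for step: It\^o's formula along the equilibrium path for $g$ yields \eqref{gW}--\eqref{gP}, and the equilibrium path itself yields the two equalities (on $P^{\Xih}$ via the jump identity $V(x,t,y)=V(x-a,t,y+a)+\int_0^a\theta\ee^{\rho(T-t)}c\,\dd z$). The frozen-$\eta$ perturbation with an arbitrary constant $u\in U_1$, together with an initial-jump perturbation, inserted into the expansion from Step~3 of Theorem~\ref{verification thm}, then yields the two inequalities in \eqref{min}.

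On your ``main obstacle'': the equality on $P^{\Xih}$ comes from that jump identity alone, since comparing with the no-intervention perturbation only reproduces inequality (a). Minimality of $\Ph$ on $P^{\Xih}$ is not proved in the paper either; it obtains \eqref{Ph_def} only on $W^{\Xih}$. Nor can it be proved: neither the equilibrium nor the perturbed dynamics ever evaluate $\Ph$ on $P^{\Xih}\setminus\overline{W^{\Xih}}$, so $\Ph$ may be altered there without affecting the equilibrium property.
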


\begin{proof}
\textbf{Step 1.} {\bf  We prove equations \eqref{gW}, \eqref{gP} and  \eqref{gT} hold.} 

Fix arbitrary $(x,t,y)\in \Qc$ with $t<T$ and  $ \forall \ h \in (0,T-t)$. Applying It\^o's formula to $g$,  taking expectations and using Condition(2) yield 
\begin{equation}
\label{g1}
    \begin{aligned}
        0 & = \Eb_{x,t,y}[g(\Xh_{t+h-},t+h,\xh_{t+h}) - g(x,t,y)] \\
        & = \Eb_{x,t,y}\biggl[\int_t^{t+h}(\Ac^{\Ph}g)(\Xh_{s-},s,\xh_{s-}) \dd s + \int_t^{t+h} [g_y(\Xh_{s-},s,\xh_{s-})-g_x(\Xh_{s-},s,\xh_{s-})] \dd \xh_s^c \\
        & \quad + \sum_{s\in[t,t+h)} \int_0^{\Delta\xh_s}[g_y(\Xh_{s-}-u,s,\xh_{s-}+u)-g_x(\Xh_{s-}-u,s,\xh_{s-}+u)] \dd u \biggl].
    \end{aligned}
\end{equation}
Letting $h \downarrow 0$, we obtain $
\Eb_{x,t,y}[g(x-\Delta\xh_t,t,y+\Delta\xh_t)-g(x,t,y)]=0$,
then for all $m\in[0,\Delta\xh_t]$, $g(x-m,t,y+m) = g(x,t,y)$,
thus we have $$g_y(x,t,y)=g_x(x,t,y),\forall (x,t,y)\in P^{\Xih}.$$
For any $(x,t,y)\in W^{\Xih}$, there exists $h_1$ sufficiently small such that $\xh_s = \xh_t, \forall s\in[t,t+h_1]$, and thus we have from \eqref{g1} that $\Eb_{x,t,y} \left[\frac{1}{h}\int_t^{t+h}(\Ac^{\Ph}g)(\Xh_{s},s,\xh_{s}) \dd s =0\right]$. Then, using Condition (2) and the dominated convergence theorem, we have $$(\Ac^{\Ph}g)(x,t,y) = 0, \forall(x,t,y) \in W^{\Xih}.$$
The terminal condition follows easily from the probabilistic expression of $g$.

\textbf{Step 2.} {\bf We prove that both expressions within the curly braces in \eqref{min} are nonnegative and the terminal condition \eqref{VT} holds.}

Using the techniques in \textbf{Step 3} in the proof of Theorem~\ref{verification thm},  for any $(x,t,y) \in \Qc$ and any admissible regular-singular control pair of perturbation $(u,\eta) \in \Dc'_{[t,t+h)}, h < h_0$, where $h_0$ is defined in Condition (2), denoting the corresponding admissible  perturbed regular-singular  control pair still by $(u,\eta)$, we obtain 
\begin{equation}
    \begin{aligned}
     \label{1.1}
     &   J(x,t,y;u,\eta) - J(x,t,y;\ph,\xh)\\
     = &  \Eb_{x,t,y} \biggl\{ \int_t^{t+h}  [(\Ac^u V)(X^{u,\eta}_{s-},s,\eta_{s-})+\gamma \sigma^2(X^{u,\eta}_{s-},s,\eta_{s-},u(X^{u,\eta}_{s-},s,\eta_{s-})) (g_x^2)(X^{u,\eta}_{s-},s,\eta_{s-})] \dd s \\
      & + \int_t^{t+h} [\theta c(X^{u,\eta}_{s-},s,\eta_{s-}) \ee^{\rho(T-s)}+V_y(X^{u,\eta}_{s-},s,\eta_{s-}) - V_x(X^{u,\eta}_{s-},s,\eta_{s-})]\dd \eta^c_s \\
      & +\sum_{t\le s<t+h}\int_0^{\Delta\eta_s}[\theta c(X^{u,\eta}_{s-}-z,s,\eta_{s-}+z) \ee^{\rho(T-s)} + V_y(X^{u,\eta}_{s-}-z,s,\eta_{s-}+z) - V_x(X^{u,\eta}_{s-}-z,s,\eta_{s-}+z)] \dd z 
      \end{aligned}
      \end{equation}
      \begin{equation*}
      \begin{aligned}
      & + \gamma \int_t^{t+h}[2(g(X^{u,\eta}_{s-},s,\eta_{s-})-g(x,t,y))(g_y(X^{u,\eta}_{s-},s,\eta_{s-})-g_x(X^{u,\eta}_{s-},s,\eta_{s-})] \dd \eta^c_s\\
       & + \gamma \sum_{t\le s<t+h}\int_0^{\Delta\eta_s} \left[(2(g-g(x,t,y))g_y)(X^{u,\eta}_{s-}-z,s,\eta_{s-}+z)-(2(g-g(x,t,y)) g_x)(X^{u,\eta}_{s-}-z,s,\eta_{s-}+z)\right] \dd z\\
       & -2 \gamma \int_t^{t+h}[g(x-\Delta\eta_t,t,y+\Delta\eta_t) - g(X^{u,\eta}_{s-},s,\eta_{s-})](\Ac^u g)(X^{u,\eta}_{s-},s,\eta_{s-})\dd s \biggl\}+o(h).
    \end{aligned}
\end{equation*}
Using Definition~\ref{equ_law}, we have $\liminf\limits_{h\downarrow 0}\{J(x,t,y;u,\eta) - J(x,t,y;\ph,\xh)\} \le 0$ yielding $$
\int_0^{\Delta \eta_t}[\theta c(X^{u,\eta}_{t-}-z,t,\eta_{t-}+z) \ee^{\rho(T-t)}+V_y(x-z,t,y+z)-V_x(x-z,t,y+z)]\dd z \ge 0,$$
for arbitrary $\Delta\eta_t \in[0,m-y]$. Thus, we obtain $$\theta c(x,t,y) \ee^{\rho(T-t)}+V_y(x,t,y)-V_x(x,t,y) \ge 0.$$
Choosing $\eta_s = y,\forall s\in[t,T]$, using the fact that $\liminf\limits_{h \downarrow 0}\frac{J(x,t,y;u,\eta) - J(x,t,y;\ph,\xh)}{h} \ge 0$, Condition (2) and Dominated convergence theorem, we have $
(\Ac^u V)(x,t,y)+\sigma^2(x,t,y,u(x,t,y))(g_x^2)(x,t,y) \ge 0$
for arbitrary $u(x,t,y) \in U_1$, then
\begin{align}
\label{1.2}
    \min_{\pi\in U_1}(\Ac^{\pi} V)(x,t,y)+ \gamma \sigma^2(x,t,y,\pi)(g_x^2)(x,t,y) \ge 0.
    \end{align}
The terminal condition follows easily from the probabilistic expression of $V$.

\textbf{Step 3.} {\bf We prove that equality holds in \eqref{min} and $\Ph$ has the expression in \eqref{Ph_def}.}

Let $(\pi^h,\xi^h) = (\ph,\xh)$ in \eqref{1.1}, then for any $(x,t,y) \in W^{\Xih}$, there exists $h_1>0$ small enough such that $\xh_s = \xh_t, \forall s\in[t,t+h]$. Dividing both sides of \eqref{1.1} by $h$, then letting $h \downarrow 0$, using Condition (2) and Dominated convergence theorem, we obtain 
$$(\Ac^{\Ph} V)(x,t,y)+ \gamma \sigma^2(x,t,y,\Ph(x,t,y))(g_x^2)(x,t,y) = 0.$$
Combining with \eqref{1.2}, we obtain  $\Ph$ has the expression in \eqref{Ph_def}. Note that for any $(x,t,y)\in P^{\Xih}$, using the same argument as  in \textbf{Step 1}, we have $V(x,t,y) = V(x-a,t,y+a) + \theta c(x,t,y) \ee^{\rho(T-t)}a,\  \forall \ a\in [0,\Delta\xh_t]$, thus
$$\theta c(x,t,y) \ee^{\rho(T-t)} -V_x(x,t,y) + V_y(x,t,y) =0. $$
Noting that $W^{\Xih} \cup P^{\Xih} = \Qc$, we complete the proof.
\end{proof}

\section{Reinsurance Application and Structural Interaction Analysis}
\label{Example}
This section specializes   the  regular-singular equilibrium framework \eqref{equ_J}  to a reinsurance model and derives explicit equilibrium strategies under the verification theorem \ref{verification thm}. The main purpose is to reveal the interaction between the regular and singular controls. In contrast to standard singular control problems, the singular free boundary depends explicitly on the regular control feedback law, leading to a coupled equilibrium structure. The explicit solution further enables us to identify parameter regimes in which this coupling disappears.

\subsection{Insurance Model Specification}

We now illustrate the general framework established above by considering a specific example that an insurance company operating two lines of business with different risk-exposure characteristics. The corresponding drift and volatility parameters of the businesses satisfy
$$
0<a_H<a_L,\qquad \sigma_H>\sigma_L\ge0.
$$
The ordering above
is imposed to create a nontrivial allocation problem. It excludes the case where one business line uniformly dominates the other in terms of both expected exposure and variability, and thus guarantees the presence of a genuine trade-off in the insurer's allocation decision. Moreover, the aggregate risk exposure exhibits mean-reverting behavior with rate $b>0$.

The insurance company is allowed to dynamically allocate its underwriting between the two business lines. To this end, the insurer adopts a progressively measurable regular control process $$ \pi = \{\pi_t\}_{t \ge 0},  \, \pi_t \in [0,1],$$
where $\pi_t$ denotes the proportion of total risk exposure allocated to the high-volatility business segment at time $t$, while the remaining proportion $1-\pi_t$ is assigned to the low-volatility segment. Moreover, the insurer can irreversibly transfer its risk exposure to the reinsurer by paying a reinsurance premium $c(t,y)$, where $y$ denotes the accumulated reinsurance level. The cumulative reinsurance position is represented by a non-decreasing c\`adl\`ag process $\xi=\{\xi_t\}_{t\ge0}$ with $\xi_{0-}=y_0\in[0,m]$, where $m$ is the maximal reinsurance capacity.

We assume that the reinsurance level affects the high-risk business line through a linear distortion term $\alpha_2(y-m)$, so that its effective drift becomes $\pi_t\big(a_H-\alpha_2(y-m)\big).$   Given reinsurance premium $c(t,y) = c_0(t) + \alpha_1 c_1(t) y$, we consider the SDE of the insurer's exposure process $X^{\pi,\xi}$ under regular control $\{\pi_t\}_{t \ge 0}$ and singular control $\{\xi_t\}_{t \ge 0}$:
\begin{equation}
\begin{cases}
dX^{\pi,\xi}_{t}=[a_{L}(1-\pi_{t})+[a_{H}-\alpha_2 (\xi_{t-}-m)]\pi_t-bX^{\pi,\xi}_{t}]dt+[\sigma_{L}(1-\pi_{t})+\sigma_{H} \pi_{t}]dB_{t}-d\xi_{t},\,\, t\in(0,T],\\
X^{\pi,\xi}_{0-}=x_{0}.
\end{cases}
\end{equation}
The objective of the insurer is choosing a control pair $(\pi,\xi)$ to minimize
\begin{equation}
\label{J}
J(x,t,y;\pi,\xi):=\mathbb{E}_{x,t,y}X^{\pi,\xi}_{T}+\gamma \operatorname{Var}_{x,t,y}X^{\pi,\xi}_{T}+\theta \mathbb{E}_{x,t,y}\int_{t}^{T}c(r,\xi_{r-})e^{\rho(T-r)} \circ d\xi_{r}.
\end{equation}

To obtain a tractable equilibrium characterization and focus on the interaction between regular and singular controls, we impose the following assumptions throughout the remainder of this section.

\begin{assumption} \label{assump-solution} (1)   For any $t \in [0,T]$,   $c_0(t) >0,\,\, \alpha_1 c_1(t) \ge 0, \, \, \theta c_0(T) \ge 1$. (2) $\sigma_L = 0$. (3) $a_L-a_H \ge \alpha_2 m$.  
\end{assumption}
\begin{remark} Note that, based on \eqref{VT}, the equilibrium value function $V$ at the terminal time $T$ is 
\begin{align*}
    V(x,T,y) & = x - \max_{r \in [0,m-y]} \left[r - \theta \int_0^r (c_0(T) + \alpha_1 c_1(T) (y+z)) \dd z\right] \\
    & = x-\max_{r \in [0,m-y]} \left\{(1-\theta [c_0(T) + \alpha_1 c_1(T) y])r - \frac{ \theta \alpha_1 c_1(T)}{2}r^2\right\}.
\end{align*} 
If $\theta \alpha_1 c_1(T) = 0$, then $V(x,T,y) = x$.  If $\theta \alpha_1 c_1(T) > 0$, based on the relative position of the axis of symmetry $\frac{1-\theta c_0(T)}{\theta\alpha_1 c_1(T)}-y$ of the quadratic function in \(r\) (enclosed in braces in the preceding equation) with respect to \(0\) and \(m-y\), the terminal value function assumes distinct forms. Without loss of generality, we restrict our attention to the case where \(\frac{1-\theta c_0(T)}{\theta\alpha_1 c_1(T)}-y \le 0\). Condition (1) of Assumption~\ref{assump-solution} ensures that, it is not optimal in equilibrium to immediately exhaust any reinsurance capacity at time $T$ for any $y\in[0,m]$. Consequently, the terminal equilibrium is $V(x,T,y) = x$, which is a continuous terminal value function. Some other cases are relegated to the Appendix~\ref{B}. Condition (2) serves to simplify the calculations involved in our problem while simultaneously retaining its essential features. Condition (3) of Assumption \ref{assump-solution} ensures that under Condition (2), any line of business with a high drift rate must have a low diffusion rate; otherwise, the insurer would only engage in business with both lower drift and lower diffusion rates and would cease to purchase reinsurance. As the equilibrium policies are straightforward to derive, our analysis is restricted to the non-trivial scenarios that carry analytical significance.
\end{remark}

\subsection{Equilibrium Characterization}
Note that the terminal singular control depends only on accumulative reinsurance and time, for all $(x,t,y) \in \Bc :=\{(x,t,y)\in \Qc: \ph(x,t,y) \in (0,1)\}$, we make an ansatz that 
\begin{align}\label{ansatz}
\begin{cases}
   V^*(x,t,y)  = u_3(t)x+u_2(t)y^2+u_1(t)y+u_0(t),\\
   g^*(x,t,y)  = v_3(t)x+v_2(t)y^2+v_1(t)y+v_0(t).
\end{cases}
\end{align}
To illustrate the derivation of the explicit solution, we proceed under the standing assumption that all denominators arising in the analysis are strictly positive. This assumption is made purely for computational clarity and does not restrict the generality of the final results, as the corresponding conditions will be explicitly characterized in Theorem~\ref{example 1}. We focus on the condition of $V^*(x,T,y) = x $, the extended HJB equations become the followings:
  \begin{align}
       & \min\biggl\{ \min_{u \in [0,1]} \left\{u_3'(t)x +u_2'(t)y^2 +  u_1'(t)y + u_0'(t) + [a_L(1-u) + [a_H-\alpha_2(y-m)] u - bx]u_3(t) \right.\quad \, \nonumber \\
       & \hspace{4.5cm} + \left.\gamma\sigma_H^2 u^2 v_3^2(t)\right\},\, 
        \theta c(t) \ee^{\rho(T-t)}-u_3(t) +2 u_2(t) y+u_1(t) \biggl\} = 0,  \label{4min}\\
    &v_3'(t)x+v_2'(t)y^2+v_1'(t)y+v_0'(t)+[a_L(1-\ph(x,t,y))+[a_H-\alpha_2(y-m)] \ph(x,t,y)-bx]v_3(t)=0,\nonumber\\
    &\hspace{11.8cm}  \forall (x,t,y) \in \overline{W^{*}\cap \Bc} ,\, t<T,\label{4gW}\\
      &  v_1(t) = v_2(t),\,\ \ \ \ \forall (x,t,y) \in P^{*}\cap \Bc,\, t<T, \label{4gP}
    \end{align}
with the terminal conditions 
\begin{align}
    u_3(T) & = v_3(T) =  1,\label{u3v3}\\
    u_2(T) & = v_2(T) = u_1(T) = v_1(T) = u_0(T) = v_0(T) = 0,\label{uv210}
\end{align}
where $\ph$ is the equilibrium singular control. The regular-singular control law $\Xi^* = (W^{*},P^{*})$ has the following form:
\begin{align*}
    W^{*} \cap \Bc & = \{(x,t,y)\in \Bc:\theta c(t,y)\ee^{\rho(T-t)} - u_3(t)+2 u_2(t)y+u_1(t)>0 \text{ or } y=m\},\\
    P^{*} \cap \Bc & = \{(x,t,y)\in \Bc:\theta c(t,y)\ee^{\rho(T-t)} - u_3(t)+2 u_2(t)y+u_1(t)=0 , y<m\}.
\end{align*}
Note that $\ph$ is independent of $x$, using the inequalities \eqref{4min}, \eqref{4gW} and\eqref{u3v3} as well as the arbitrariness of $x \in \overline{W^{*}\cap \Bc}$, we obtain $$ u_3(t) = \ee^{-b(T-t)},\, v_3(t) = \ee^{-b(T-t)}.$$

Now we find the concrete form of the regular control $\ph$ as follows:

Because $ a_L>a_H + \alpha_2 m$, we have 
\begin{align}
\label{pi_equilibrium}
    \ph(x,t,y) & = \argmin_{u\in[0,1]} \{(a_H-\alpha_2(y-m)-a_L) u_3(t) u+\gamma v_3^2(t)\sigma_H^2 u^2\}\nonumber\\
    & =  \frac{(a_L-a_H+\alpha_2(y-m))e^{b(T-t)}}{2\gamma \sigma_H^2}, \ \ \    \forall (x,t,y) \in \Bc  = \{(x,t,y) \in \Qc: t > f(y)\},
\end{align}
where 
\begin{equation}\label{f(y)} 
f(y): =\min\left\{ \left(\frac{1}{b}\ln\frac{a_L-a_H+\alpha_2(y-m)}{2\gamma\sigma_H^2}+T \right), T\right\}.
\end{equation}
 Then, in the $\overline{W^{*} \cap \Bc}$ region, using \eqref{4min}, \eqref{4gW}, \eqref{uv210} and the  arbitrariness of of $y$,  we have
\begin{equation}
\begin{aligned}
    V^*(x,t,y) & = \ee^{-b(T-t)}x-\frac{(a_H-a_L-\alpha_2(y-m))^2(T-t)}{4\gamma \sigma_H^2} + \frac{a_L}{b}(1-\ee^{-b(T-t)}),\, \ \forall (x,t,y) \in \overline{W^{*}\cap \Bc},\\
    g^*(x,t,y) & = \ee^{-b(T-t)}x + \frac{(a_L-a_H+\alpha_2(y-m))^2}{2\gamma \sigma_H^2}(t-T) +\frac{a_L}{b}(1-\ee^{-b(T-t)}), \  \forall (x,t,y) \in \overline{W^*\cap \Bc},
    \end{aligned}
\end{equation}
where 
\begin{align}
    W^{*} \cap \Bc & = \{(x,t,y) \in \Bc:y > y^*(t) \textbf{ or } y= m\}, \quad
    P^{*} \cap \Bc & = \{(x,t,y) \in \Bc: y \le y^*(t),\, y<m \},
\end{align} 
and 
\begin{equation}
\label{y^*}
    \begin{aligned}
        y^*(t) & = \begin{cases}
            \min\left\{\frac{\theta c_0(t) \ee^{\rho(T-t)}-\ee^{-b(T-t)}-\frac{\alpha_2 (t-T)}{2\gamma \sigma_H^2}(a_H-a_L+\alpha_2 m)}{\frac{\alpha_2^2 (T-t)}{2\gamma\sigma_H^2}-\theta \alpha_1 c_1(t) \ee^{\rho(T-t)}},\ m\right\}, & \forall t \in [0,T),\\
            \lim_{t \rightarrow T} y^*(t), & t = T.
        \end{cases}
    \end{aligned}
    \end{equation}  
Now we come to the $\overline{W^{*}} \cap \Bc^c$ region, where $\Bc^c$ is the complement of $\Bc$ in $\Qc$. Noting that $\xh_s =\xh_{t-}$, $\ph(x,t,y) = 1$, we have
\begin{equation}
\begin{cases}
    \dd X_s = [a_H-\alpha_2(y-m)-b X_s]\dd s + \sigma_H \dd B_s, \, & s \in [t,f(y)],\\
    \dd X_s = \left(a_L-\frac{[a_L-a_H+\alpha_2 (y-m)]^2}{2\gamma \sigma_H^2} \ee^{b(T-s)} - b X_s\right)\dd s + \frac{(a_L-a_H+\alpha_2(y-m))\ee^{b(T-s)}}{2\gamma\sigma_H}\dd B_s, \, & s\in (f(y),T],\\
    X_{t-} = x.
\end{cases}
\end{equation}
and 
\begin{align}
    &X_T = x (\ee^{-b(T-t)}-\ee^{-b(T-f(y))})+ \frac{a_H-\alpha_2(y-m)}{b}(\ee^{-b(T-f(y))}-\ee^{-b(T-t)})  +\int_t^{f(y)}\sigma_H \ee^{-b(T-s)} \dd B_s \nonumber \\
    & + \left\{x \ee^{-b(T-f(y))}- \frac{[a_L-a_H+\alpha_2(y-m)]^2}{2\gamma\sigma_H^2}(T-f(y)) + \frac{a_L}{b}(1-\ee^{-b(T-f(y))}) + \int_{f(y)}^T \frac{a_L-a_H+\alpha_2(y-m)}{2\gamma \sigma_H} \dd B_s\right\} .\nonumber
    \end{align}
    Notice that the expression enclosed in the curly braces represents $X_T$ starting from the initial point $(x,f(y),y)$, we obtain 
    \begin{align*}
    V^*(x,t,y) = & V^*(x,f(y),y) + [\ee^{-b(T-t)}-\ee^{-b(T-f(y))}]x + \frac{a_H-\alpha_2(y-m)}{b}(\ee^{-b(T-f(y))}-\ee^{-b(T-t)}) \\
    & + \frac{\gamma \sigma_H^2}{2b}(\ee^{-2b(T-f(y))}-\ee^{-2b(T-t)}),\quad \forall (x,t,y) \in \overline{W^{*}} \cap \Bc^c,\\
    g^*(x,t,y) =& (\ee^{-b(T-t)}-\ee^{-b(T-f(y))})x + (\ee^{-b(T-f(y))}-\ee^{-b(T-t)}) \frac{a_H+\alpha_2(m-y)}{b} \\
    & +g^*(x,f(y),y),  \quad \forall (x,t,y) \in \overline{W^*}\cap \Bc^c.
  \end{align*}
Assume 
\begin{equation}\label{y1-increasing}
\partial_y[\theta\ee^{\rho(T-t)}c(t,y)-V^{*}_x(x,t,y)+V^{*}_y(x,t,y)] 
    = \theta \ee^{\rho(T-t)}\alpha_1c_1(t) +\frac{\alpha_2^2}{2b\gamma\sigma_H^2}\ln{\frac{(a_L-a_H+\alpha_2(y-m))}{2\gamma\sigma_H^2}}\ge  0.
\end{equation}
Hence
\begin{equation}
    W^{*} \cap \Bc^c  = \{(x,t,y) \in \Bc^c: y > y^{*1}(t) \textbf{ or } y = m\}, \quad
P^{*} \cap \Bc^c  = \{(x,t,y) \in \Bc^c: y \le y^{*1}(t), y < m\},
\end{equation}
and 
\begin{equation}
\label{y^*1}
\begin{aligned}
y^{*1}(t) & :=\inf\left\{y\in[0,m]:\frac{a_L-a_H+\alpha_2(y-m)}{2\gamma\sigma_H^2}\frac{\alpha_2}{b}\left[-1+ \ln \left(\frac{a_L-a_H+\alpha_2(y-m)}{2\gamma\sigma_H^2}\right)\right]\right.\\
&\quad\quad\quad\quad +\left(\frac{\alpha_2}{b}-1\right)\ee^{-b(T-t)}+\theta \ee^{\rho(T-t)}[c_0(t) +\alpha_1 c_1(t)y] > 0 \Biggl \},
\end{aligned}
\end{equation}
where $\inf \emptyset := m$.
Thus, for $(x,t,y)\in P^{*}$, we have $(x-(k(t)-y),t,k(t)) \in \overline{W^*}$, and
\begin{eqnarray}
  &&V^*(x,t,y) =  V^*(x-(k(t)-y),t,k(t))+\biggl[\theta (c_0(t)+\alpha_1 c_1(t)y)(k(t)-y) +\frac{\theta \alpha_1 c_1(t)}{2}(k(t)-y)^2]\biggl] \ee^{\rho(T-t)},\nonumber\\
 && g^*(x,t,y)  = g^*(x-(k(t)-y),t,k(t)),\nonumber\\
&&k(t) := \begin{cases}\label{k}
        y^*(t),\quad \forall t\in\{t\in[0,T]:y^*(t) \le \hat{y}(t)\},\\
        y^{*1}(t),\quad \forall t\in \{t\in[0,T]:y^{*1}(t) > \hat{y}(t)\}.
    \end{cases}
   \end{eqnarray}
The $ k(\cdot) $
   is the free boundary of the accumulated reinsurance, in which 
\begin{equation}
\label{hat y}
     \hat{y}(t)  : = \frac{a_H +\alpha_2m-a_L + 2\gamma\sigma_H^2\ee^{-b(T-t)}}{\alpha_2}.
\end{equation}
Indeed, $k(\cdot)$ denotes the switching boundary pseparating the region where the optimal regular control $\ph$ satisfies $\ph \equiv 1$ from the region where $\ph \in (0,1)$. \\
Summarizing the above results, we construct the candidate value function and the candidate auxiliary function as follows:
\begin{equation}
    \label{V*,g*}
    \begin{aligned}
       V^*(x,t,y) & = 
        \begin{cases}
             \ee^{-b(T-t)}x+\frac{(a_H-a_L-\alpha_2(y-m))^2}{4\gamma \sigma_H^2}(t-T) + \frac{a_L}{b}(1-\ee^{-b(T-t)}),\,  &\forall (x,t,y) \in \overline{W^*\cap \Bc},\\
             (\ee^{-b(T-t)} - \ee^{-b(T-f(y)})x + (\ee^{-b(T-f(y))}-\ee^{-b(T-t)})\frac{a_H+\alpha_2(m-y)}{b}\\
             \hspace{3cm}+(\ee^{-2b(T-f(y))}-\ee^{-2b(T-t)})\frac{\gamma \sigma_H^2}{2b}+V^*(x,f(y),y),\,&\forall (x,t,y) \in \overline{W^*}\cap \Bc^c, \\
            V^*(x-(k(t)-y),t,k(t))+\biggl[\theta (c_0(t)+\alpha_1 c_1(t)y)(k(t)-y)\\
           \hspace{5cm} +\frac{\theta \alpha_1 c_1(t)}{2}(k(t)-y)^2]\biggl] \ee^{\rho(T-t)},\, & \forall (x,t,y) \in P^*.
        \end{cases}
        \end{aligned}
        \end{equation}
        \begin{equation}
            \begin{aligned}
        g^*(x,t,y) & = 
        \begin{cases}
            \ee^{-b(T-t)}x + \frac{(a_L-a_H+\alpha_2(y-m))^2}{2\gamma \sigma_H^2}(t-T) +\frac{a_L}{b}(1-\ee^{-b(T-t)}), \hspace{2cm} \forall (x,t,y) \in \overline{W^*\cap \Bc},\\
            (\ee^{-b(T-t)}-\ee^{-b(T-f(y))})x + (\ee^{-b(T-f(y))}-\ee^{-b(T-t)}) \frac{a_H+\alpha_2(m-y)}{b}+g^*(x,f(y),y), \\
              \hspace{11.5cm} \forall (x,t,y) \in \overline{W^*}\cap \Bc^c,\\
            g^*(x-(k(t)-y),t,k(t)), \hspace{7.3cm} \forall  (x,t,y) \in P^*,
        \end{cases}
        \end{aligned}
    \end{equation} 
     \begin{equation}
     \label{W*P*}
     \begin{aligned}
       & W^*  := \{(x,t,y)\in \Qc:y > k(t)\},\quad P^*:=\{(x,t,y)\in \Qc:y \le k(t)\},\\
       & \Pi^*(x,t,y)  :=\min \left\{ \frac{[a_L-a_H+\alpha_2 (y-m)]}{2\gamma \sigma_H^2}\ee^{b(T-t)}, \, 1\right\} = \mathbf{1}_{\{y\ge\hat{y}(t)\}} + \frac{[a_L-a_H+\alpha_2 (y-m)]}{2\gamma \sigma_H^2}\ee^{b(T-t)}\mathbf{1}_{\{y < \hat{y}(t)\}},
     \end{aligned}
     \end{equation}
     and $y^*(t) ,y^{*1}(t), \hat{y}(t), k(t)$ are defined in \eqref{y^*}, \eqref{y^*1}, \eqref{hat y}, \eqref{k}.
     
 Because  the rigorous verification of the equilibrium strategies requires additional technical conditions, we refer to them as candidate functions at this stage. Furthermore, the regularity properties of these candidate functions are established in the following lemma.
\begin{lemma}
The candidate value function $V^*$ and the candidate auxiliary function $g^*$ defined in \eqref{V*,g*} have the following regularity properties:
    $$V^* \in C^{2,1,1}(\Qc),\ \mbox{and}\  g^* \in C^{2,1,1}(\overline{W^*}) \cap C^{2,1,1}(P^*)\cap C^{2,1,0}(\Qc).$$
\end{lemma}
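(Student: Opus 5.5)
The plan is a piecewise check. The candidates $V^*$ and $g^*$ in \eqref{V*,g*} are given by explicit smooth formulas on each of three pieces: $\overline{W^*\cap\Bc}$, $\overline{W^*}\cap\Bc^c$, and $P^*$. Smoothness inside each piece is immediate, because every expression is polynomial in $(x,y)$ with $C^1$ coefficients in $t$. Here $f(y)$ is smooth wherever it is strictly below $T$, by the log formula \eqref{f(y)}, and $k(t)$ is assumed $C^1$ (it is defined implicitly through \eqref{y^*} and \eqref{y^*1}). So everything reduces to matching conditions across two interfaces:
\begin{itemize}
\item[(a)] the regular-control switching surface $\{t=f(y)\}$, equivalently $\{y=\hat y(t)\}$, inside $\overline{W^*}$;
\item[(b)] the free boundary $\{y=k(t)\}$ separating $W^*$ from $P^*$.
\end{itemize}
Since all pieces are affine in $x$ in the waiting region, the $x$-regularity is trivial ($V^*_{xx}=g^*_{xx}=0$ on each side). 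The real work concerns first derivatives in $t$ and $y$.

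\textbf{Interface (a).} On $\overline{W^*}\cap\Bc^c$ both functions are written as the value at $(x,f(y),y)$ plus increments that vanish when $t=f(y)$, so continuity is built in. For $\partial_t$: the $\Bc^c$ formula gives $\partial_t V^* = \mathcal{L}$-type terms evaluated with $\pi=1$, while the $\Bc$ formula gives the same generator with $\pi=\ph$. At $t=f(y)$ we have $\ph=1$, so the two derivatives agree. This is essentially the fact that $\min_u$ is attained continuously, together with the envelope theorem.

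For $\partial_y$: I will differentiate $V^*(x,f(y),y)$ plus the increments by the chain rule. The terms involving $f'(y)$ combine into $\partial_t$ of the $\Bc$-formula minus $\partial_t$ of the $\Bc^c$-formula at $t=f(y)$. That difference is zero by the preceding step, so the $f'(y)$ contributions cancel and $\partial_y$ is continuous. The same argument applies to $g^*$, using \eqref{4gW}.

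\textbf{Interface (b).} On $P^*$ the functions are defined by translation along the direction $(-1,0,+1)$ to the boundary point $(x-(k(t)-y),t,k(t))$, with the explicit cost term added for $V^*$. Continuity at $y=k(t)$ is immediate.

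Differentiating the $P^*$ formula for $V^*$ gives
\[
V^*_y-V^*_x=-\theta\ee^{\rho(T-t)}c(t,y)
\]
identically on $P^*$. In $W^*$, the quantity $\theta\ee^{\rho(T-t)}c-V^*_x+V^*_y$ vanishes exactly at $y=k(t)$, by the definition of $y^*$ and $y^{*1}$. Together with $V^*_x=\ee^{-b(T-t)}$ on both sides, this matches $V^*_y$ across the boundary.

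For $\partial_t$ on $P^*$, the chain rule produces a term $k'(t)\,[V^*_x-V^*_y-\theta\ee^{\rho(T-t)}c](\cdot,t,k(t))$, which vanishes by the same boundary identity. Second $x$-derivatives are zero on both sides, since translation preserves affinity in $x$. Hence $V^*\in C^{2,1,1}(\Qc)$.

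For $g^*$, continuity and $g^*_x$ (constant $\ee^{-b(T-t)}$ on both sides) match. On $P^*$, however, $g^*_y=g^*_x$, whereas on $W^*$ the derivative $g^*_y$ equals the $y$-derivative of the quadratic term, which is generically nonzero. So $g^*$ is not $C^1$ in $y$ across $k$, and we obtain only $C^{2,1,0}(\Qc)$. Smoothness holds on each closed piece, with $\partial_t$ continuity failing in general too, which is why the claim is restricted to closed pieces. I would record this explicitly.

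\textbf{Main obstacle.} The delicate part is where $k$ itself switches between $y^*$ and $y^{*1}$, namely where the free boundary crosses the surface $\hat y$, so that (a) and (b) meet. There I would use the paper's claimed $C^1$ property of $k$ (Proposition~\ref{C1}) and check the matching identities from both $\Bc$ and $\Bc^c$ sides at the corner. I would also check the degenerate case $f(y)=T$, where $\Bc^c$ is empty and the formulas reduce to the terminal data \eqref{u3v3}–\eqref{uv210}.
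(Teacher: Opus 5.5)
Your treatment of $V^*$ follows the paper's argument. Across $t=f(y)$, the coefficient of $f'(y)$ in $V^*_y$ (your ``difference of the two $t$-derivatives'') is the HJB expression at $(x,f(y),y)$, and it vanishes because $\Ph=1$ there. On $P^*$, the coefficient of $k'(t)$ in $V^*_t$ vanishes by the free-boundary identity $\theta\ee^{\rho(T-t)}c-V^*_x+V^*_y=0$ at $(x-(k(t)-y),t,k(t))$. That part is fine.

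The gap is in your paragraph on $g^*$. You write that you ``obtain only $C^{2,1,0}(\Qc)$'', and in the next sentence that $\partial_t g^*$ is in general discontinuous across $y=k(t)$. These cannot both hold: $C^{2,1,0}(\Qc)$ requires $g^*_t$ to be continuous on all of $\Qc$, and the statement is not ``restricted to closed pieces''.

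Your second observation is the correct one. Since $g^*$ is continuous and $C^1$ up to the curve from each side, the derivative of $t\mapsto g^*(x,t,k(t))$ is the same from both sides. This forces $[g^*_t]=-k'(t)\,[g^*_y]$. Concretely, differentiating $g^*(x,t,y)=g^*(x-(k(t)-y),t,k(t))$ gives, in the interior regime and with all terms evaluated at $(x,t,k(t))$,
\[
g^*_t\big|_{P^*}-g^*_t\big|_{W^*}=k'(t)\,\big(g^*_y-g^*_x\big)\big|_{W^*}=-k'(t)\Big(\ee^{-b(T-t)}+\frac{\alpha_2(T-t)\big(a_L-a_H+\alpha_2(k(t)-m)\big)}{\gamma\sigma_H^2}\Big).
\]
This is strictly positive whenever $k'(t)<0$ and $\alpha_2>0$, as in Remark~\ref{c1 c_2 ex1}. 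So the $C^{2,1,0}(\Qc)$ part of the statement cannot be proved as written. The paper's proof does not address this either: it says ``by a similar argument'' and only records the jump of $g^*_y$, which by the relation above forces a jump of $g^*_t$ wherever $k'\neq 0$.

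What your computation does establish is $g^*\in C^{2,1,1}(\overline{W^*})\cap C^{2,1,1}(P^*)$, with $g^*$, $g^*_x=\ee^{-b(T-t)}$ and $g^*_{xx}=0$ continuous on $\Qc$. You should state exactly that. This is also the regularity the proof of Theorem~\ref{example 1} actually relies on, since It\^o's formula is applied there within $\overline{W^*}$ or $P^*$ separately.

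Two minor slips. When $f(y)=T$, it is $\Bc$, not $\Bc^c$, that is empty for that $y$. And the jump of $g^*_y$ equals $g^*_x-g^*_y|_{W^*}$, so what matters is whether $g^*_y|_{W^*}$ differs from $\ee^{-b(T-t)}$, not whether it is nonzero.
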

\begin{proof}
It suffices to verify the regularity conditions across the free boundary $y=k(t)$ and the regular control switching boundary $y=\hat{y}(t)$. Indeed, by inspection of their explicit piecewise expressions, it is straightforward to see that both $V^*$ and $g^*$ are continuous functions over the entire domain $\mathcal{Q}$, and they are twice continuously differentiable with respect to $x$.

Indeed, for all $(x,t,y) \in \overline{W^*}\cap \Bc^c$, we have 
\begin{equation}
    \begin{aligned}
        V^*_y(x,t,y) &= [(\Ac^{\Ph^*}V)(x,f(y),y) + \gamma \sigma^2_H g_x^2(x,f(y),y)]f'(y) + V^*_y(x,f(y),y) - \frac{\alpha_2}{b} (\ee^{-b(T-f(y))}- \ee^{-b(T-t)}) \\
        & = V^*_y(x,f(y),y) - \frac{\alpha_2}{b} (\ee^{-b(T-f(y))} -\ee^{-b(T-t)}),\\
        V^*_t(x,t,y) & = -[a_H + \alpha_2(m-y)-bx]\ee^{-b(T-t)}-\gamma\sigma_H^2 \ee^{-2b(T-t)}. 
    \end{aligned}
\end{equation}
Then, as $(x,t,y)\rightarrow (x_1, f(y_1), y_1)$, note that $ (x,t,y)\in \overline{W^*}\cap \Bc^c, (x_1, f(y_1), y_1)\in \overline{W^*\cap \Bc^c}$, we have $V^*_y(x,t,y)\rightarrow V^*_y(x_1,f(y_1),y_1)$, $ V^*_t(x,t,y)\rightarrow V^*_t(x_1,f(y_1),y_1)$. By the construction of $V^*$, it is straightforward to verify that $V^*$ is continuously differentiable  with respect to $y$ across the free boundary $k(t)$. For all $(x,t,y) \in P^*$, we have 
\begin{equation*}
    \begin{aligned}
    V_t^*(x,t,y)  =& V^*_t(x-k(t)+y,t,k(t)) + [\theta c(t,y) \ee^{\rho(T-t)} - (V^*_x-V^*_y)(x-k(t)+y,t,k(t))]k'(t) \\
    & + \left[ \theta[c_0'(t) + \alpha_1 c_1'(t)y] + \frac{\theta\alpha_1 c_1'(t)}{2}(k(t)-y) + \theta\alpha_1c_1(t)k'(t)\right.\\
    &\left. - \rho \left(\theta c(t,y) + \frac{\theta\alpha_1 c_1(t)}{2}(k(t)-y)\right)\right]\ee^{\rho(T-t)}(k(t) - y)\\
     = & V^*_t(x-k(t)+y,t,k(t))  + \left[ \theta[c_0'(t) + \alpha_1 c_1'(t)y] + \frac{\theta\alpha_1 c_1'(t)}{2}(k(t)-y) + \theta\alpha_1c_1(t)k'(t)\right.\\
    & \left. -\rho \left(\theta c(t,y) + \frac{\theta\alpha_1 c_1(t)}{2}(k(t)-y)\right)\right]\ee^{\rho(T-t)}(k(t)-y)\\
    \rightarrow & V^*_t(x_1,t_1,k(t_1)) \quad  \text{ as },  (x,t,y) \rightarrow (x_1,t_1,k(t_1)).
    \end{aligned}
\end{equation*}
Thus $V^* \in C^{2,1,1}(\Qc)$. The regularity of the candidate auxiliary function $g^*$ can be established by a similar argument. Specifically, because  $g^*$ satisfies distinct differential equations within $W^*$ and $P^*$, respectively, and only the continuous pasting condition (rather than smooth pasting) is imposed during the solution procedure, a direct derivative comparison reveals that $g^*_y$ can be discontinuous across the free boundary $k(t)$.
\end{proof}

As $g_y$ may fail to be continuous at the free boundary $k(t)$, certain conditions must be imposed to guarantee that $V^*$ and $g^*$ indeed serve as the equilibrium value function and the associated auxiliary function, respectively. Therefore we propose the following theorem.

\begin{theorem}[Explicit equilibrium solution under no terminal jump]
\label{example 1} 
Under Assumption~\ref{assump-solution}, and assume the following conditions hold:

    (1) $\alpha_2 \neq 0$.
    
    (2) $\begin{aligned}[t]
        &\theta \alpha_1 c_1(t) \ee^{\rho (T-t)}- \frac{\alpha_2^2 (T-t)}{2\gamma \sigma_H^2} >0, \nonumber\\
        &\theta\alpha_1 c_1(t) \ee^{\rho (T-t)}+ \frac{\alpha_2^2}{2b\gamma\sigma_H^2}\ln{\frac{(a_L-a_H-\alpha_2 m)}{2\gamma\sigma_H^2}}\ge 0, \forall t \in[0,T].\nonumber
    \end{aligned}$
    
    \vspace{1em}
    (3) $k(T) \le 0,\, k'(t) \le 0,\,\forall t \in\{s \in [0, T]:k(t) >0\}$.
    \vspace{1em}
    
    (4) $\begin{aligned}[t]
        &  \left\{\frac{\dd [\theta\ee^{\rho(T-t)}c_0(t)]}{\dd t} +\frac{k(t)}{2}\frac{\dd [\alpha_1 c_1(t) \theta \ee^{\rho(T-t)}]}{\dd t} -b\ee^{-b(T-t)}\right\} \mathbf{1}_{\{k(t) \ge 0\}}\ge 0.\nonumber
    \end{aligned}$ 
    \vspace{1em}

    (5) $y^*(0) \ge \hat{y}(0)$,

   \noindent where $y^*(t)$, $y^{*1}(t)$, $\hat{y}(t)$ and $ k(t)$ are defined in \eqref{y^*}, \eqref{y^*1}, \eqref{hat y}, \eqref{k}, respectively.
    Then the regular-singular control law $(\Pi^*,\Xi^*)$ is an equilibrium law, where $\Xi^* = (W^*,P^*)$ and $W^*,P^*,\Pi^*$ are defined in \eqref{W*P*}, and  $V^*, g^*$  defined in \eqref{V*,g*} are the corresponding value function  and auxiliary function, respectively.
\end{theorem}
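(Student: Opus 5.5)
The plan is to apply the Verification Theorem~\ref{verification thm} to $(V^*,g^*,\Pi^*,\Xi^*)$, in the relaxed form described in the remark following it, since $g^*$ is only $C^{2,1,0}$ across $y=k(t)$. First I will check that the objects match. Because $V^*_x=\ee^{-b(T-t)}$ everywhere, the minimiser in \eqref{Ph_def} is exactly $\Pi^*$ from \eqref{W*P*}. Condition (2) ($\theta\alpha_1c_1\ee^{\rho(T-t)}>\alpha_2^2(T-t)/(2\gamma\sigma_H^2)$, together with the logarithmic inequality, which is \eqref{y1-increasing} at $y=0$) gives $\partial_y[\theta\ee^{\rho(T-t)}c-V^*_x+V^*_y]\ge0$ on both $\Bc$ and $\Bc^c$. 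Hence the sets \eqref{W}--\eqref{P} built from $V^*$ coincide with $W^*=\{y>k(t)\}$ and $P^*=\{y\le k(t)\}$. Condition (5), combined with the monotonicity of $y^*,\hat y$ coming from (3), is what guarantees that $k$ defined piecewise in \eqref{k} is a well-defined single curve, switching from $y^{*}$ to $y^{*1}$ exactly where it meets $\hat y$.

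Next I will verify the extended HJB system \eqref{min}--\eqref{gT}. The terminal conditions are immediate from Assumption~\ref{assump-solution}(1): $\theta c_0(T)\ge1$ makes the maximiser $r=0$, and with the tie-breaking rule this gives $V^*(x,T,y)=g^*(x,T,y)=x$. On $\overline{W^*}$, $V^*$ and $g^*$ were constructed so that the first branch of \eqref{min} is $0$ and \eqref{gW} holds. It remains to show that the gradient branch $\theta\ee^{\rho(T-t)}c-V^*_x+V^*_y$ is $\ge0$ there, which follows from its monotonicity in $y$ and its vanishing at $y=k(t)$. On $P^*$, $V^*$ is defined by transporting along the direction $(-1,0,1)$ up to the boundary with the cost added. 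This makes the gradient branch vanish identically, so \eqref{gP} holds for $g^*$ by construction. The work is to show that the generator branch $\min_\pi\{\Ac^\pi V^*+\gamma\sigma^2(g^*_x)^2\}$ is $\ge0$ on $P^*$. I will compute it by differentiating $V^*(x,t,y)=V^*(x-k+y,t,k)+\theta\ee^{\rho(T-t)}[\ldots]$ as in the lemma's proof, and subtract the corresponding identity at the boundary point, where the value is $0$. Because $k'\le0$ (condition (3)) and the smooth-fit term multiplying $k'$ vanishes, what remains is $(k(t)-y)\ge0$ times the bracket in condition (4), plus a term from the $y$-dependence of $\Pi^*$. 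The $y$-dependent drift contributes $u_3\alpha_2\Pi^*(k-y)$-type corrections, and I expect these to be absorbed using (2) and $a_L-a_H\ge\alpha_2m$. This sign check in $P^*$, handled separately on $\Bc$ and $\Bc^c$, is the main obstacle, and I would first try to reduce it exactly to condition (4).

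Finally I will handle regularity and the technical conditions. $V^*\in C^{2,1,1}$ by the preceding lemma. For $g^*$, the only defect is a jump of $g^*_y$ across $y=k(t)$. In the proofs of Theorem~\ref{verification thm} this enters only through terms $\int(g_y-g_x)\,\dd\xi$ evaluated at points of $P^*$ (closed, containing the boundary), where one uses the one-sided limit from $P^*$, and there \eqref{gP} holds. The Brownian part sees only $x$-derivatives, which are continuous. Since $\xi$ acts only on $\{y\le k(t)\}$ and $k$ is decreasing, $\hat\xi$ jumps at most at time $t$ and is otherwise constant; the state never crosses the boundary from $W^*$, because $y$ is nondecreasing and $k$ nonincreasing. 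So Itô's formula can be applied piecewise, and by $k(T)\le0$ no terminal jump is needed. For admissibility, condition (3) of the theorem, Proposition~\ref{square int} applies with linear coefficients and bounded $c$. The resulting reflection problem is trivial: at most an initial jump to $k(t)$, after which there is no further action. Condition (4) follows since $V^*_x,g^*_x$ are deterministic bounded functions of $t$, and $\Ac^ug^*$ and $g^*_y-g^*_x$ grow at most linearly in $x$, uniformly for $y\in[0,m]$, so the second-moment bounds of Proposition~\ref{square int} give \eqref{finite}.
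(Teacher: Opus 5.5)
Your outline follows the paper's proof: Theorem~\ref{verification thm} in its relaxed form, \eqref{min} and \eqref{gW} on $\overline{W^*}$ by construction, and the transport formula on $P^*$ with the $k'(t)$-term removed by smooth fit at $(x-(k(t)-y),t,k(t))$. Monotonicity of $k$ then confines the kink of $g^*_y$. The gap is the step you yourself call the main obstacle, the sign of the generator branch on $P^*$. You leave it at ``I expect these to be absorbed using (2) and $a_L-a_H\ge\alpha_2m$'', with a split into $\Bc$ and $\Bc^c$, and that is not what closes it.

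The missing idea is this. The boundary term
\[
V^*_t(x-(k(t)-y),t,k(t))=-\min_{u\in[0,1]}\bigl\{[a_u(t)-b(x-(k(t)-y))]\ee^{-b(T-t)}+\gamma\sigma_H^2u^2\ee^{-2b(T-t)}\bigr\}
\]
should be bounded from below by its value at the current minimiser $u=\Pi^*(x,t,y)$. This is the minimiser of the generator branch at $(x,t,y)$, since $V^*_x=g^*_x=\ee^{-b(T-t)}$. The control-dependent terms then cancel except for $\alpha_2\Pi^*(x,t,y)(k(t)-y)\ee^{-b(T-t)}$. In both regimes at once this gives
\[
(\Ac^{\Pi^*}V^*)+\gamma\sigma_H^2(\Pi^*)^2(g^*_x)^2\ \ge\ (k(t)-y)\Bigl[(\alpha_2\Pi^*-b)\ee^{-b(T-t)}+\tfrac{\dd}{\dd t}\bigl(\theta\ee^{\rho(T-t)}c_0\bigr)+\tfrac{k(t)+y}{2}\,\tfrac{\dd}{\dd t}\bigl(\theta\ee^{\rho(T-t)}\alpha_1c_1\bigr)\Bigr].
\]
Equivalently, $z\mapsto\min_u\{(a_H-a_L-\alpha_2(z-m))u\,\ee^{-b(T-t)}+\gamma\sigma_H^2u^2\ee^{-2b(T-t)}\}$ is concave, being a minimum of affine functions, with envelope derivative $-\alpha_2\Pi^*\ee^{-b(T-t)}$. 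The correction $\alpha_2\Pi^*(k-y)\ee^{-b(T-t)}$ is nonnegative for $\alpha_2>0$ and is simply discarded. Neither condition (2) nor $a_L-a_H\ge\alpha_2 m$ plays any role in this step.

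What remains is also not exactly the bracket of condition (4): the coefficient of $\frac{\dd}{\dd t}(\theta\ee^{\rho(T-t)}\alpha_1c_1)$ is $\frac{k+y}{2}$, not $\frac{k}{2}$. Since $0\le y\le k(t)$ on $P^*$, (4) yields nonnegativity when this derivative is $\ge0$ (as in Remark~\ref{c1 c_2 ex1}), together with $\alpha_2>0$. The paper's proof uses both facts tacitly, but your claimed exact reduction to (4) would need them stated.

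Two smaller corrections:
\begin{itemize}
\item The $k'(t)$-term vanishes by smooth fit alone, so $k'\le0$ is not needed there. It is needed for localising the kink. There you should also use $\eta_{(t+h)-}-\eta_t\le Mh$, which keeps a perturbed path starting strictly inside $P^*$ from crossing into $W^*$ for small $h$. Your argument only covers paths that start in $W^*$.
\item $g^*(x,T,y)=x$ follows from uniqueness of the terminal maximiser $r=0$. That uniqueness needs $\theta\alpha_1c_1(T)>0$, i.e.\ condition (2) at $t=T$. It does not come from Assumption~\ref{asm:tie-breaking}, which in a tie would select $r=m-y$ and give $x-(m-y)$.
\end{itemize}
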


\begin{proof}
    \textbf{Step 1.} {\bf We show that $(V^*,g^*)$ satisfy the HJB equations in Theorem~\ref{verification thm}.}
    
Through a straightforward computation, it can be verified that  the pair $(V^*,g^*)$ satisfies \eqref{min}, $g^*$ satisfies \eqref{gW}  in $\overline{W^*}$ region, and  $g$ satisfies \eqref{gP}  in $P^*$ region. For all $(x,t,y) \in P^*$, we have 
\begin{equation}
    \label{V_t in P}
    \begin{aligned}
    & V^*_t(x,t,y)  = \left[\theta \ee^{\rho(T-t)}[c_0(t)+\alpha_1 c_1(t)k(t)] +  (V_y^*-V_x^*)(x-(k(t)-y),t,k(t))\right] k'(t)\\
    & + V^*_t(x-(k(t)-y),t,k(t)) +\frac{\dd [\theta \ee^{\rho(T-t)}(c_0(t) + \alpha_1 c_1(t)y)]}{\dd t}(k(t)-y) + \frac{\dd[\theta \ee^{\rho(T-t)}\alpha_1 c_1(t)]}{\dd t}\frac{(k(t)-y)^2}{2}.
    \end{aligned}
\end{equation}
Define $a_u(t) = a_L(1-u) + [a_H-\alpha_2(k(t)-m)]u, a^*(x,t,y) = a_L (1-\Pi^*(x,t,y)) + [a_H-\alpha_2(y-m)]\Pi^*(x,t,y)$. Observe that $(x-(k(t)-y),t,k(t)) \in \overline{W^*}\cap P^*$, we have
\begin{align}
    &\theta \ee^{\rho(T-t)}[c_0(t)+\alpha_1 c_1(t)k(t)]+(V^*_y-V^*_x)(x-(k(t)-y),t,k(t))=0,\label{PcapW1}\\
    &V^*_t(x-(k(t)-y),t,k(t)) = -\min_{u\in[0,1]}\left\{[a_u(t)-b(x-(k(t)-y))]\ee^{-b(T-t)}+\gamma\sigma_H^2u^2\ee^{-2b(T-t)}\right\}.\label{PcapW2}
    \end{align}
Substituting \eqref{PcapW1},\eqref{PcapW2} into \eqref{V_t in P}, we have 
\begin{equation*}
    \begin{aligned}
        & V^*_t(x,t,y) = -\min_{u\in[0,1]}\left\{[a_u(t)-b(x-(k(t)-y))]\ee^{-b(T-t)}+\gamma\sigma_H^2u^2\ee^{-2b(T-t)}\right\}\\
        & \hspace{5cm} +\frac{\dd [\theta \ee^{\rho(T-t)}(c_0(t) + \alpha_1 c_1(t)y)]}{\dd t}(k(t)-y) + \frac{\dd[\theta \ee^{\rho(T-t)}\alpha_1 c_1(t)]}{\dd t}\frac{(k(t)-y)^2}{2}.
    \end{aligned}
\end{equation*}
Thus we obtain 
\begin{eqnarray*}
           && (\Ac^{\Pi^*}V^*)(x,t,y) +\gamma\sigma_H^2(\Pi^*(x,t,y))^2 (g^*_x)^2(x,t,y)\\
           &=&  [a^*(x,t,y)-bx]\ee^{-b(T-t)} +\gamma \sigma_H^2 (\Pi^*(x,t,y))^2 \ee^{-2b(T-t)} +\frac{\dd [\theta \ee^{\rho(T-t)}(c_0(t) + \alpha_1 c_1(t)y)]}{\dd t}(k(t)-y) \\
           &&+ \frac{\dd[\theta \ee^{\rho(T-t)}\alpha_1 c_1(t)]}{\dd t}\frac{(k(t)-y)^2}{2}-\min_{u\in[0,1]}\left\{[a_u(t)-b(x-(k(t)-y))]\ee^{-b(T-t)}+\gamma\sigma_H^2u^2\ee^{-2b(T-t)}\right\}\\
        &\ge& [a^*(x,t,y)-bx]\ee^{-b(T-t)} +\gamma \sigma_H^2(\Pi^*(x,t,y))^2\ee^{-2b(T-t)}+\frac{\dd [\theta \ee^{\rho(T-t)}(c_0(t) + \alpha_1 c_1(t)y)]}{\dd t}(k(t)-y)\\
        &&+ \frac{\dd[\theta \ee^{\rho(T-t)}\alpha_1 c_1(t)]}{\dd t}\frac{(k(t)-y)^2}{2}-\left\{[a_{\Pi^*(x,t,y)}(t)-b(x-(k(t)-y))]\ee^{-b(T-t)}+\gamma\sigma_H^2 (\Pi^*(x,t,y))^2 \ee^{-2b(T-t)}\right\}\\
        &=&\left[(\alpha_2 \Pi^*(x,t,y)-b)\ee^{-b(T-t)}+\frac{\dd [\theta \ee^{\rho(T-t)}c_0(t)]}{\dd t}\right](k(t)-y) + \frac{\dd[\theta \ee^{\rho(T-t)}\alpha_1 c_1(t)]}{\dd t}\frac{k^2(t)-y^2}{2} \ge 0,
        \end{eqnarray*}
    combining with $\theta \ee^{\rho(T-t)} (c_0(t) + \alpha_1 c_1(t) y) - V^*_x(x,t,y) - V^*_y(x,t,y)  = 0$,
    we have the pair $(V^*,g^*)$ satisfies \eqref{min} in $P^*$ region.
\vskip 2pt
    \textbf{Step 2.} {\bf We prove $(\Pi^*,\Xi^*)$ is an equilibrium law  by demonstrating that the proof of Theorem~\ref{verification thm} remains valid under the current setting.} 
    \vskip 2pt 
    
Denote the singular control corresponding to $(\Pi^*,\Xi^*)$ by $\xi^*$.
  By using \textbf{Step 1} and Proposition~\ref{square int}, Conditions (2) and (3) of Theorem~\ref{verification thm} are satisfied. For Condition (4) of Theorem~\ref{verification thm}, if the initial time is $t$ and $X^{\Pi^*,\Xi^*}_{t-}=x$, $\xi^*_{t-}=y$, using $V^*_x$, $g^*_x$, $ 2g^*g^*_x$, $\Ac^u g^*$ ,$g^*_y-g^*_x$, $g^*$ are continuous with respect to $(t,y)$ separately in $\overline{W^{\Xih}}$ and $P^{\Xih}$ and have at most linear growth in $x$ and Proposition~\ref{square int}, then
  we have $\Eb_{x,t,y}[\sup_{s\in[t,T]}(X^{u,\eta}_s)^2] < + \infty$ for any admissible pair of perturbations $(u,\eta) \in \Dc'_{[t,t+h)}, \forall h \in (0,h_0)$, thus Condition (4) of Theorem~\ref{verification thm} holids.
  
Now we come to discuss Condition (1) of Theorem~\ref{verification thm}. Note that $V^* \in C^{2,1,1}(\Qc), g^* \in C^{2,1,1}(\overline{W^*}) \cap C^{2,1,1}(P^*)\cap C^{2,1,0}(\Qc)$ and it can be observed that $g^*_y$ is discontinuous solely at the points where $y = y_1(t)$. We come to solve the problems in  Theorem~\ref{verification thm} caused by the discontinuous of $g_y$ at the points of $y=k(t)$. In \textbf{Step 1} and \textbf{Step 2} of Theorem~\ref{verification thm}, note that $k(t)$ is decreasing with respect to $t$, therefore the purchase of the reinsurance can only occur at the initial time. Once the state-time-control triple enters $\overline{W^*}$, the accumulated reinsurance coverage remains constant. Thus for all  $(x,t,y) \in \Qc,$ we have $ g(x,t,y) = g(x-(\max\{y,k(t)\}-y),t,\max\{y,k(t)\}-y))$, with $(x-(\max\{y,k(t)\}-y),t,\max\{y,k(t)\}-y)) \in \overline{W^*}$, then we can applying  It\^o's formula over the interval from the completion of the initial decision to time $T$. In \textbf{Step 3} of Theorem~\ref{verification thm}, by the expression of $(W^*,P^*)$  and note that $k(t)$ is decreasing with respect to $t$, we deduce that, the path the state-time-control triple moves from $(x-\Delta\eta_t,t,y+\Delta\eta_t)$ to $(X^{u,\eta}_{t+h-},t+h,\eta_{t+h-})$ is within $\overline{W^*}$ or $P^*$ for sufficiently small $h$, hence if $V^*,g^* 
     \in C^{2,1,1}(\overline{W^{*}}) \cap C^{2,1,1}(P^{*})$, the It\^o's formula is still valid. Therefore, with the modifications discussed above, the same argument used in the proof of Theorem~\ref{verification thm} holds. Thus we complete the proof.    
\end{proof}

\begin{remark}
\label{c1 c_2 ex1}
    We illustrate the validity of the conditions in Theorem~\ref{example 1} by  an example as follows. If 
    $$
    \theta c_0(t) \ee^{\rho(T-t)} = \ee^{-b(T-t)},\, \,\,\,\, \theta \alpha_1 c_1(t) \ee^{\rho(T-t)} = 1+\frac{\alpha_2^2(t-T)}{2\gamma \sigma_H^2},$$
    a straightforward calculation shows that when $\alpha_2$ is sufficiently small, we have
    \begin{align*}
        &(y^*)'(t) = -\alpha_2\left( \frac{a_L-a_H-\alpha_2 m}{2\gamma \sigma_H^2}\right)\left(\frac{\alpha_2^2(T-t)}{\gamma \sigma_H^2}-1\right)^{-2} \propto -\alpha_2,\quad y^*(T)=0,\\
       & \frac{\alpha_2}{b}\frac{a_L-a_H+\alpha_2(y-m)}{2\gamma\sigma_H^2}\left[\ln \left(\frac{a_L-a_H+\alpha_2(y-m)}{2\gamma\sigma_H^2}\right)-1\right]+\frac{\alpha_2}{b}+y \ge y, \, \, \Longrightarrow \,\, y^{*1}(T) = 0,
    \end{align*}  
    and  for $ t \in \{s\in[0,T]: y^{*1}(s) \in (0,m),y^*(t) \ge \hat{y}(t)\}$, we have 
    \begin{equation}
    \label{y*1'}
    (y^{*1})'(t)  =\frac{(b-\alpha_2)\ee^{-b(T-t)}-\frac{\partial[\theta \ee^{\rho(T-t)}c(t,y^{*1})]}{\partial t}}{\theta\ee^{\rho(T-t)}\alpha_1c_1(t)+\frac{\alpha_2^2}{2b\gamma\sigma_H^2}\ln\left(\frac{a_L-a_H+\alpha_2(y^{*1}(t)-m)}{2\gamma\sigma_H^2}\right)} = \frac{-\alpha_2 \ee^{-b(T-t)}- \frac{ \alpha_2^2}{2\gamma\sigma_H^2} y^{*1}(t)}{1-\frac{\alpha_2^2(T-t)}{2\gamma\sigma_H^2}+\frac{\alpha_2^2}{2b\gamma \sigma_H^2}\ln{\frac{a_L-a_H+\alpha_2(y^{*1}(t)-m)}{2\gamma \sigma_H^2}}} \propto -\alpha_2.
    \end{equation}  
   Therefore, we have $k'(t) \propto -\alpha_2<0, k(T)=0$. Here, the symbol $\propto$ denotes proportionality. Then  the Condition (1)-(4) of Theorem~\ref{example 1} and Assumption~\ref{assump-solution} are satisfied  if $\alpha_2$ is sufficiently small, and since $$
   y^*(0) = \frac{\alpha_2 T(a_L - a_H -\alpha_2m)}{2\gamma\sigma_H^2-2\alpha_2^2T}>0>\frac{a_H+\alpha_2 m -a_L+2\gamma\sigma_H^2\ee^{-bT}}{\alpha_2} = \hat{y}(0),$$
    whenever $\alpha_2$ is sufficiently small and $T$ such that $-\frac{1}{b}\ln \frac{a_L-a_H-\alpha_2 m}{2\gamma\sigma_H^2}<T<\frac{\gamma\sigma_H^2}{\alpha_2^2}$, Condition (5) of Theorem~\ref{example 1} follows.
    
\end{remark}
\begin{remark}
    The quadratic ansatz in the state variable y is valid only in the regime where the regular control satisfies $\pi^* \in (0,1)$. When the control becomes binding, i.e., $\pi^* \equiv1$, this quadratic structure no longer holds, and the value function is governed by a different regime-specific representation. Consequently, the two regimes are associated with distinct free boundaries. This difference reflects that the waiting region, determined by the regular control, directly influences the structure of the singular control boundary, thereby inducing a regime-dependent equilibrium partition.
\end{remark}

The explicit equilibrium solution shows that the free boundary exhibits a regular-regime-dependent piecewise representation, which mirrors the switching dynamics of the equilibrium regular control. Naturally, it remains a fundamental question whether this piecewise boundary maintains its smoothness across the transition horizon. The following proposition demonstrates that the free boundary achieves continuous differentiability at the intersection of the regular switching boundary and the equilibrium free boundary.

\begin{proposition}
\label{C1}
Under Assumption~\ref{assump-solution} and the conditions of Theorem~\ref{example 1} holds, if the free boundary $k(\cdot)$ defined in ~\eqref{k} and the regular switching boundary $\hat{y}(\cdot)$ defined in ~\eqref{hat y} intersect at $t_0 \in (0,T)$ when $\hat{y}(0) \ge 0$.  Then the equilibrium free boundary $k(\cdot)$ is continuously differentiable at $t_0$, namely,
$$
k(t_0-)=k(t_0+),
\qquad
k'(t_0-)=k'(t_0+).
$$
Consequently, the regime transition of the regular control does not induce a turning point in the equilibrium free boundary.
\end{proposition}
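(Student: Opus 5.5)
The plan is to describe $k$ on both sides of $t_0$ as the zero set of one function $\Phi(t,y)$ that is continuously differentiable in $(t,y)$. Then the implicit function theorem gives continuity and continuity of $k'$ at $t_0$ together. The natural choice is
\[
\Phi(t,y):=\theta \ee^{\rho(T-t)}c(t,y)-V^*_x(x,t,y)+V^*_y(x,t,y),
\]
evaluated on $\overline{W^*}$. It does not depend on $x$, since $V^*_x=\ee^{-b(T-t)}$ in both waiting regimes. In the regime $\Bc$ ($y<\hat y(t)$), $y^*(t)$ is exactly the root of the affine-in-$y$ equation $\Phi=0$, using the first line of \eqref{V*,g*}. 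In the regime $\Bc^c$ ($y\ge \hat y(t)$), $y^{*1}(t)$ is the root of the bracketed expression in \eqref{y^*1}, which is $\Phi$ computed from the second line of \eqref{V*,g*}. So $k(t)$ solves $\Phi(t,k(t))=0$ on each side of $t_0$, and the switch between \eqref{y^*} and \eqref{y^*1} is just the switch of the formula for $V^*_y$ across the surface $y=\hat y(t)$, equivalently $t=f(y)$.

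First, I will show that $\Phi$ is $C^1$ across $\{y=\hat y(t)\}$. By the preceding lemma, $V^*\in C^{2,1,1}(\Qc)$, so $V^*_y$ is continuous there. For the derivatives of $V^*_y$, I will differentiate the two explicit expressions and compare them on $t=f(y)$. Write $\ell(y):=\frac{a_L-a_H+\alpha_2(y-m)}{2\gamma\sigma_H^2}$, so that $\ee^{-b(T-f(y))}=\ell(y)$ at the switching surface. The $\Bc^c$ expression has $\partial_t V^*_y=-\alpha_2\ee^{-b(T-t)}$. The $\Bc$ expression has $\partial_t V^*_y=\alpha_2\,\ell(y)\cdot(-1)\cdot$const, and I expect the two to agree at $\ee^{-b(T-t)}=\ell(y)$. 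The $y$-derivatives will also match, because $\partial_y$ of the $\Bc^c$ formula produces the term $\frac{\alpha_2^2}{2b\gamma\sigma_H^2}\ln \ell(y)$ from \eqref{y1-increasing}, and this equals $\frac{\alpha_2^2}{2\gamma\sigma_H^2}(t-T)$ on the surface, which is the $\Bc$-side $y$-derivative. With these matches, $\Phi\in C^1$ in a neighbourhood of $(t_0,k(t_0))$.

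Next comes continuity of $k$. By hypothesis $k(t_0)=\hat y(t_0)$, where both pieces are evaluated. Since $\Phi$ is continuous and both $y^*(t_0)$ and $y^{*1}(t_0)$ are roots of the same $\Phi(t_0,\cdot)$ at the point $\hat y(t_0)$, the one-sided limits agree. Uniqueness of the root comes from strict monotonicity in $y$. Condition (2) of Theorem~\ref{example 1} gives $\partial_y\Phi>0$ on the $\Bc$ side, and \eqref{y1-increasing} gives $\partial_y\Phi\ge0$ on the $\Bc^c$ side; by the derivative matching, $\partial_y\Phi(t_0,k(t_0))$ equals the $\Bc$-side value, which is strictly positive. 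I also need $k(t_0)\in(0,m)$, so that the $\min\{\cdot,m\}$ and $\inf$ truncations are inactive; $\hat y(0)\ge 0$ together with the monotonicity in condition (3) should give this.

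Finally, the implicit function theorem at $(t_0,k(t_0))$ gives a unique $C^1$ curve $\tilde k$ with $\Phi(t,\tilde k(t))=0$ near $t_0$. On each side, $k$ coincides with $\tilde k$, so $k'(t_0\pm)=-\Phi_t/\Phi_y$ evaluated at $(t_0,k(t_0))$. As a consistency check, this can be compared with formula \eqref{y*1'} and with the derivative of \eqref{y^*}.

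The main obstacle is the derivative-matching computation for $V^*_y$ across $t=f(y)$, in particular the $\ln\ell(y)$ term and the contribution of $f'(y)$. The lemma's proof shows that the $f'(y)$ term vanishes because the HJB expression is zero on the boundary. I will redo that argument for the second derivatives.
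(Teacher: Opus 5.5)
Your proposal is correct and is essentially the paper's proof. The paper likewise treats $y^*$ and $y^{*1}$ as roots of $F$ and of the bracket in \eqref{y^*1} (your $\Phi$ on the two sides), shows the bracket vanishes at $(t_0,y_0)$ because $\ee^{-b(T-t_0)}=\ell(y_0)$, and compares $-F_t/F_y$ from \eqref{y*'} with \eqref{y*1'} there; that is exactly your matching of $\Phi_t$ and $\Phi_y$ across $t=f(y)$, and gluing into one $C^1$ function with a single implicit-function-theorem step is a tidier packaging that also gives local uniqueness of the root and $k\in C^1$ near $t_0$.

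Minor points:
\begin{itemize}
\item The signs are positive: $\partial_tV^*_y=+\alpha_2\ee^{-b(T-t)}$ on $\Bc^c$ and $+\alpha_2\ell(y)$ on $\Bc$.
\item $\Phi$ must be built from the explicit waiting-region formulas on a full neighbourhood of $(t_0,k(t_0))$, not from $V^*$ itself. For $t<t_0$ the surface $y=\hat y(t)$ lies in $P^*$, where the lemma says nothing about those formulas and $V^*$ itself gives $\Phi\equiv 0$.
\item $k(t_0)<m$ does not follow from $\hat y(0)\ge 0$ and condition (3); only $k(t_0)>0$ does. It must be assumed, as the paper tacitly does.
\end{itemize}
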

\begin{proof}
    At time $t_0$, we have $y^*(t_0) = \hat{y}(t_0) := y_0$, which implies
    \begin{equation} \label{t_0}
        -\alpha_2 \mathrm{e}^{-b(T-t_0)}(T-t_0) - \mathrm{e}^{-b(T-t_0)} + \theta \mathrm{e}^{\rho(T-t_0)} \bigl[ c_0(t_0) + \alpha_1 c_1(t_0) y_0 \bigr] = 0.
    \end{equation}
    Substituting $(y_0, t_0)$ into the bracketed function in \eqref{y^*1} and combining it with \eqref{t_0}, we obtain
    \begin{equation}
        \begin{aligned}
          & \frac{a_L-a_H+\alpha_2(y_0-m)}{2\gamma\sigma_H^2} \frac{\alpha_2}{b} \left[ -1 + \ln \left( \frac{a_L-a_H+\alpha_2(y_0-m)}{2\gamma\sigma_H^2} \right) \right] + \left( \frac{\alpha_2}{b} - 1 \right) \mathrm{e}^{-b(T-t_0)} \\
          & + \theta \mathrm{e}^{\rho(T-t_0)} \bigl[ c_0(t_0) + \alpha_1 c_1(t_0) y_0 \bigr] = 0.   
        \end{aligned}
    \end{equation}
Then it follows from \eqref{y1-increasing} that $y^{*1}(t_0) = y_0 = y^*(t_0)$, which yields $k(t_0-) = k(t_0+)$.
    
    To proceed, recalling that $y^*(t)$ is implicitly defined as the unique root of the equation $F(t, y) = 0$, where
    \[
        F(t, y) := \theta \mathrm{e}^{\rho(T-t)} c_0(t) + \theta\alpha_1 \mathrm{e}^{\rho(T-t) } c_1(t) y - \mathrm{e}^{-b(T-t)} + \alpha_2 \frac{a_H-a_L-\alpha_2(y-m)}{2\gamma\sigma_H^2} (T-t),
    \]
   by using the Implicit Function Theorem, the derivative $(y^*)'(t)$ can be computed via $(y^*)'(t) = -\frac{\partial F / \partial t}{\partial F / \partial y}$, which leads to
    \begin{equation}
    \label{y*'}
        (y^*)'(t) = \frac{b\mathrm{e}^{-b(T-t)} - \frac{\alpha_2 [a_L-a_H+\alpha_2(y-m)]}{2\gamma\sigma_H^2} - \theta \bigl[ c_0(t) \mathrm{e}^{\rho(T-t)} \bigr]' - \theta \bigl[ \alpha_1 c_1(t) \mathrm{e}^{\rho(T-t)} \bigr]' y^*(t)}{\theta \alpha_1 c_1(t)\mathrm{e}^{\rho(T-t)} - \frac{\alpha_2^2(T-t)}{2\gamma\sigma_H^2}}.
    \end{equation}
    Utilizing the facts that $y^*(t_0) = y^{*1}(t_0) = \hat{y}(t_0) = y_0$ and \eqref{y*1'}, and substituting $(y_0, t_0)$ into \eqref{y*'}, we arrive at $(y^*)'(t_0) = (y^{*1})'(t_0)$, which implies $k'(t_0-) = k'(t_0+)$.
\end{proof}
\begin{remark}
\label{c1}
    Using Proposition~\ref{C1}, together with the expression of $k(t)$, we can deduce that the free boundary $k(t)$ is continuously differentiable whenever it is positive.
\end{remark}

\subsection{Equilibrium Space Partition and Interaction Mechanism}

The equilibrium strategy is jointly characterized by the regular control switching boundary $\hat{y}(\cdot)$ defined in \eqref{hat y} and the  free boundary $k(\cdot)$. Their mutual interaction uncovers a profound coupling mechanism, inducing a non-trivial partition of the equilibrium state space on $\Qc$.

Recall that the equilibrium regular control law admits a threshold structure of the form
\begin{equation}
  \Pi^*(x,t,y)=
  \begin{cases}
  \frac{a_L-a_H+\alpha_2(y-m)}{2\gamma\sigma_H^2}e^{b(T-t)}, & y \le \hat{y}(t),\\
  1, & y > \hat{y}(t).
  \end{cases}
\end{equation}
 Hence, the state space is naturally divided into the interior-control regime ($y \le \hat{y}(t)$) and the saturated-control regime ($y > \hat{y}(t)$). 
 
 On the other hand,   the equilibrium singular control is characterized by the free boundary $k(\cdot)$, which induces the geometric decomposition
$$
W^*:=\{(x,t,y)\in\Qc: y > k(t)\}, 
\qquad 
P^*:=\{(x,t,y)\in\Qc: y \le k(t)\},
$$
where $W^*$ and $P^*$ denote the waiting and action regions, respectively. In the waiting region $W^*$, no singular adjustment is undertaken and the cumulative reinsurance level remains static, while in the action region $P^*$, an instantaneous adjustment is executed to project the state onto the free boundary. In particular, although the free boundary $k(t)$ is formally included in the candidate action region derived from the equilibrium verification argument, no singular control is triggered when the state lies exactly on the free boundary. When the singular control $\xi$ satisfies $\xi_{t-} = k(t)$, the cumulative reinsurance level already attains its equilibrium target and no further singular intervention is required. Because the free boundary $k(t), t\in [0,T]$ is monotonically decreasing in time, 
which implies that the state-time-singular control triple remains in the waiting region or on the free boundary if no further singular control is exercised. Therefore, the optimal singular control policy acts only when the state-time-singular control triple lies strictly below the free boundary, in which case an instantaneous adjustment is undertaken to move the triple onto the free boundary.

As both $\hat{y}(t)$ and $k(t)$ are independent of the capital wealth state $x$, the partition structure is entirely invariant along the $x$-direction. Therefore, the three-dimensional equilibrium space can be perfectly projected onto the two-dimensional $(t,y)$-plane without loss of qualitative insights, as illustrated in Figures~\ref{fig:VT=x}-~\ref{fig:t0}.

\begin{figure}[htp]
    \centering
    \begin{subfigure}[b]{0.48\textwidth}
        \centering
        \includegraphics[width=\textwidth]{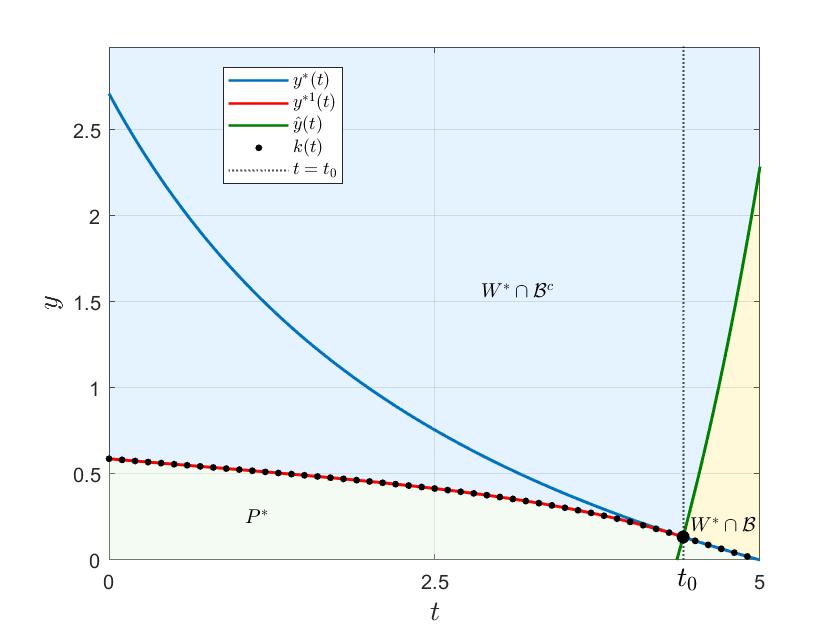}
        \caption{$\hat{y}(T) > y^*(T)$}
        \label{fig:partition1}
    \end{subfigure}
    \hfill
    \begin{subfigure}[b]{0.48\textwidth}
        \centering
        \includegraphics[width=\textwidth]{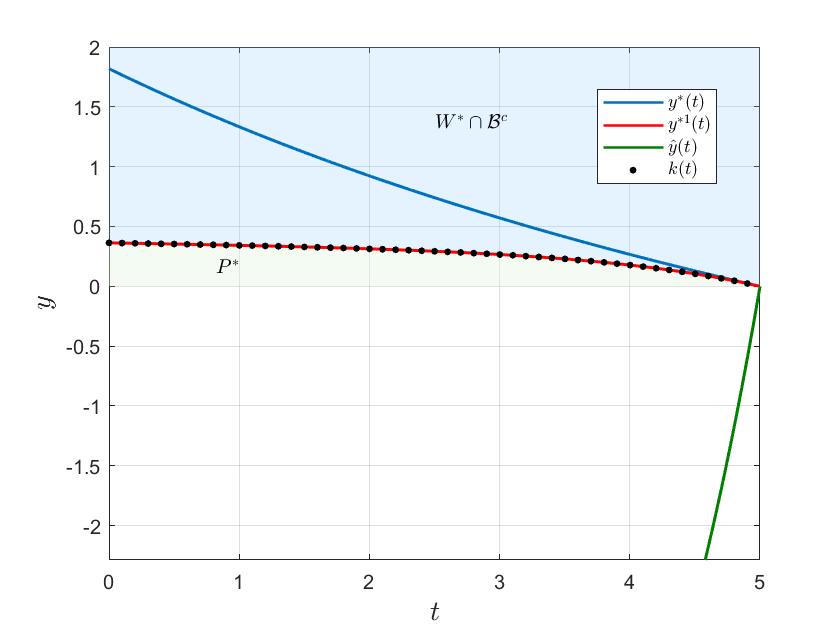}
        \caption{$\hat{y}(T) \le y^*(T)$}
        \label{fig:partition2}
    \end{subfigure}
    \caption{Space partitions jointly induced by the regular switching boundary $\hat y(t)$ and the singular free boundary $k(t)$ under $V^*(x,T,y) = x$.}
    \label{fig:VT=x}
\end{figure}

As captured in Figure~\ref{fig:VT=x}, when $\hat{y}(T) > k(T)$ (Figure 1(a)), the waiting region $W^*$ undergoes a non-trivial partition into an interior regime subregion $W^* \cap \mathcal{B}$ and a saturated regime subregion $W^* \cap \mathcal{B}^c$. This graphical structure clearly demonstrates that the active regular operational regime depends on the singular control state. This dependence stems  from the feedback loop of the time-inconsistent equilibrium system, where the regular control admits the representation $\pi^*_t = \Pi^*(X^{\pi^*,\xi^*}_{t-}, t, \xi^*_{t-})$. Because  the accumulated singular control $\xi^*$ enters the non-local extended HJB system, the equilibrium value function inherits this dependence. Consequently, the gradient condition defining the optimal stopping threshold changes when the system switches between the interior regime ($\pi^*\in(0,1)$) and the saturated regime ($\pi^*\equiv 1$), yielding a regime-dependent, piecewise representation of the equilibrium free boundary:
$$
k(t)=
\begin{cases}
y^*(t), & \forall t \in \{s\in[0,T]: y^*(s) \le \hat y(s)\},\\
y^{*1}(t), & \forall s \in \{t \in [0,T]: y^*(s) > \hat y(s)\}.
\end{cases}
$$
Despite this abrupt structural transition in the underlying control laws, the equilibrium free boundary maintains rigorous geometric smoothness across the switching epoch, as established in Proposition~\ref{C1}. When $\hat{y}(T) \le y^*(T)$, the equilibrium regular control remains constantly equal to $1$, and the free boundary is given by $k(t) = y^{*1}(t)$. In this case, the switching behavior of the free boundary vanishes.

\begin{figure}[htp]
\centering
\includegraphics[width=0.6\textwidth]{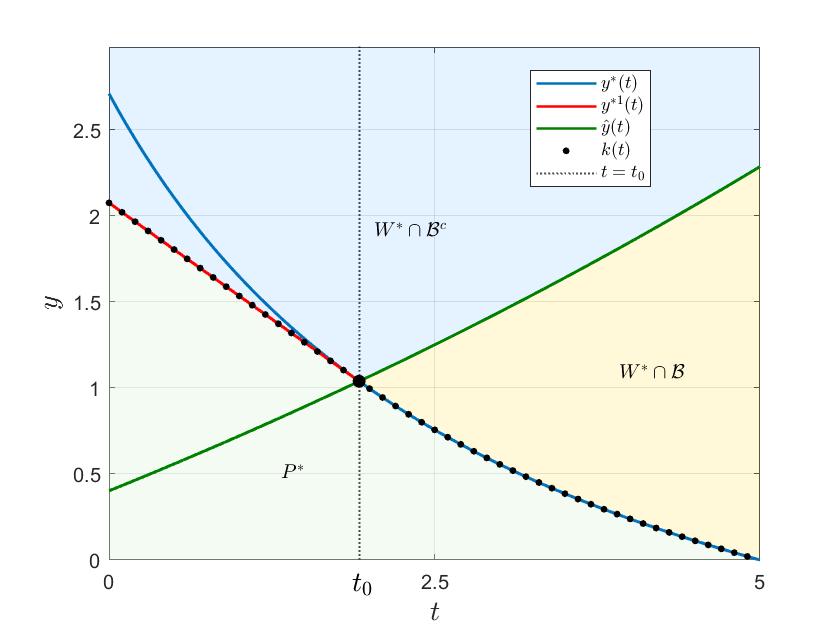}
\caption{Time-singular control space partition exhibiting a structural transition at the intersection epoch $t_0$.}
\label{fig:t0}
\end{figure}

It is worth noting that if the free boundary $k(t)$ and the regular switching boundary $\hat{y}(t)$ intersect at $t_0 \in (0, T)$ when $\hat{y}(0) \ge 0$, it will induce a regime-dependent structural break in the equilibrium policy. As shown in Figure~\ref{fig:t0}, the intersection time $t_0$ marks a structural transition in the equilibrium policy. For $t < t_0$, the candidate impulse target $y^*(t)$  characterized under the interior regular control regime ($\pi \in (0, 1)$) satisfies $y^*(t) > \hat{y}(t)$, which implies that any non-zero singular intervention instantaneously projects the state into the saturated regime $\pi \equiv 1$, rather than remaining in the interior regime. This global regime shift fundamentally alters the analytical form of the continuation value function, thereby destroying the invariance of the interior-regime candidate target. This jump changes the governing continuation value function because the post-intervention dynamics follow an entirely different control law. Consequently, the free boundary associated with the interior regime is no longer self-consistent before $t_0$, and must be endogenously redefined in terms of the continuation value under the saturated regime. 

In this sense, the equilibrium singular control induces a single, regime-switching projection, after which the relevant free boundary is endogenously determined by the new (post-jump) control structure. Consequently, the effective free boundary cannot be well-defined within a single local regime; rather, it is globally determined by the continuation value associated with the post-intervention saturated regular control law. 

Therefore, the regular and singular controls cannot be analyzed independently. The singular control state variable influences the equilibrium regular control strategy through the feedback representation, while the resulting regular control policy affects the characterization of the singular control boundary. This interaction reveals a mutually coupled structure between the regular and singular controls at the equilibrium level.

\subsection{Model Reduction and Decoupled Case}

In this subsection, we first derive the equilibrium strategies under two benchmark cases corresponding to (i) a model with only regular control and (ii) a model with only singular control. We then study the degenerate case $\alpha_2=0$, which eliminates the coupling between the two controls. It is shown that in this case the equilibrium strategies are completely decoupled and reduce to those obtained in the single-control problems. This demonstrates that the coupled equilibrium structure degenerates to the classical independent-control form when the coupling parameter equals zero.

We first analyze the degenerate cases involving only a single control.
\begin{theorem}
\label{singular_only}
    Under Assumption~\ref{assump-solution}, let $u \in [0,1]$ be given and fixed the  regular control by $\pi_t \equiv u, t\in[0,T]$, consider the reduced singular-control-only equilibrium problem, that is, choosing a singular control $\xi$ to minimize $J(x,t,y;u,\xi)$ which is defined in \eqref{J}. Assume the following conditions hold:

    (1) $\theta \alpha_1 c_1(t) > 0,\ \ \  \forall t 
    \in [0,T]$.
    \vspace{0.5em}

    (2) $l'_u(t) \le 0,\ \  \forall t \in \{ s \in [0,T]:l_u(s) >0 \}.$
    \vspace{0.5em}

     (3) $\begin{aligned}[t]
        &  \left\{\frac{\dd [\theta\ee^{\rho(T-t)}c_0(t)]}{\dd t} +\frac{l_u(t)}{2}\frac{\dd [\alpha_1 c_1(t) \theta \ee^{\rho(T-t)}]}{\dd t} -b\ee^{-b(T-t)}\right\} \mathbf{1}_{\{l_u(t) \ge 0\}}\ge 0,\nonumber
    \end{aligned}$ 
    \vspace{0.5em}
    
\noindent where
    \begin{equation} \label{l_u}
        l_u(t) = \min\big\{\frac{\ee^{-b(T-t)} + \frac{\alpha_2 u}{b}(1-\ee^{-b(T-t)}) - \theta c_0(t) \ee^{\rho(T-t)}}{\theta \alpha_1 \ee^{\rho(T-t)}c_1(t)},\ \ \  m\big \}.
    \end{equation} \noindent
Define 
    \begin{align*}
        W^u := \{(x,t,y)\in \Qc:y > l_u(t)\},\  \quad P^u:=\{(x,t,y)\in\Qc: y \le l_u(t)\}. 
    \end{align*}
\noindent
Then the singular control law $\Xi^u$ is an equilibrium where $\Xi^u = (W^u,P^u)$, and the corresponding value function $V^u$ and the auxiliary function $g^u$ are as follows:
    \begin{equation*}
      \!\! \!\!\!\!\!\!\!\!\!\!\!\!  V^u(x,t,y)  = 
        \begin{cases}
             \ee^{-b(T-t)}x+\frac{\alpha_2 u}{b}(\ee^{-b(T-t)}-1)y\\
             + \frac{[a_L(1-u)+(a_H+\alpha_2 m)u]}{b}(1-\ee^{-b(T-t)}) +\frac{\gamma\sigma_H^2 u^2}{2b}(1-\ee^{-2b(T-t)}),\, &\forall (x,t,y) \in \overline{W^u},\\
            V^u(x-(l_u(t)-y),t,l_u(t))\\
            +\biggl[\theta (c_0(t)+\alpha_1 c_1(t)y)(l_u(t)-y) +\frac{\theta \alpha_1 c_1(t)}{2}(l_u(t)-y)^2\biggl] \ee^{\rho(T-t)},\, & \forall (x,t,y) \in P^u.
        \end{cases}
        \end{equation*}
     \!\! \!\!\!\!\!\!\!\!\!\!\!\!  \begin{equation*}
      \!\! \!\!\!\!\!\!\!\!\!\!\!\!   g^u(x,t,y)  = 
        \begin{cases}
            \ee^{-b(T-t)}x + \frac{\alpha_2 u}{b}(\ee^{-b(T-t)}-1)y + \frac{a_L(1-u)+(a_H+\alpha_2 m)u}{b}(1-\ee^{-b(T-t)}), &\forall (x,t,y) \in \overline{W^u},\\
            g^u(x-(l_u(t)-y),t,l_u(t)), & \forall (x,t,y) \in P^u.
        \end{cases}
    \end{equation*}
\end{theorem}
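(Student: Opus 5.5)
The plan is to specialize the verification argument of Theorem~\ref{verification thm}, as it was adapted in the proof of Theorem~\ref{example 1}, to the case where the regular control is frozen at $\pi\equiv u$. With $u$ fixed, the regular minimization in \eqref{min} disappears. The extended HJB system reduces to three conditions:
\begin{itemize}
\item in $\overline{W^u}$: $(\Ac^u V^u)+\gamma\sigma_H^2u^2(g^u_x)^2=0$ and $\Ac^u g^u=0$;
\item in $P^u$: $\theta\ee^{\rho(T-t)}c(t,y)-V^u_x+V^u_y=0$ and $g^u_x=g^u_y$;
\item everywhere: the variational inequalities $(\Ac^u V^u)+\gamma\sigma_H^2u^2(g^u_x)^2\ge0$ and $\theta\ee^{\rho(T-t)}c-V^u_x+V^u_y\ge 0$.
\end{itemize}
The terminal conditions are $V^u(x,T,y)=g^u(x,T,y)=x$, which hold by Assumption~\ref{assump-solution}(1) exactly as in the remark following it.

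\textbf{Waiting region.} First check the explicit formulas on $\overline{W^u}$. Both $V^u$ and $g^u$ are affine in $(x,y)$ with $x$-coefficient $\ee^{-b(T-t)}$. The drift term $\alpha_2 u(m-y)\ee^{-b(T-t)}$ is cancelled by the $t$-derivative of $\frac{\alpha_2u}{b}(\ee^{-b(T-t)}-1)y$. The constant terms are cancelled by the $t$-derivative of $\frac{a_L(1-u)+(a_H+\alpha_2m)u}{b}(1-\ee^{-b(T-t)})$. Finally, $\gamma\sigma_H^2u^2\ee^{-2b(T-t)}$ is absorbed by the $t$-derivative of the term $\frac{\gamma\sigma_H^2u^2}{2b}(1-\ee^{-2b(T-t)})$, which appears in $V^u$ only.

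\textbf{Identifying the free boundary.} Next, $\theta\ee^{\rho(T-t)}(c_0+\alpha_1c_1y)-\ee^{-b(T-t)}+\frac{\alpha_2u}{b}(\ee^{-b(T-t)}-1)$ is affine and, by condition (1), strictly increasing in $y$. Its zero is exactly the unconstrained part of $l_u(t)$. Hence this expression is positive for $y>l_u(t)$, which gives the gradient inequality in $W^u$.

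\textbf{Action region.} On $P^u$, $V^u$ and $g^u$ are defined by projecting to $(x-(l_u(t)-y),t,l_u(t))$ and adding the integrated cost. Differentiating in $y$ and $x$ shows directly that $\theta\ee^{\rho(T-t)}c-V^u_x+V^u_y=0$ and $g^u_x=g^u_y$, and that $V^u$ is $C^{1}$ in $y$ across $y=l_u(t)$ by construction. The remaining inequality $(\Ac^uV^u)+\gamma\sigma_H^2u^2(g^u_x)^2\ge0$ in $P^u$ is handled exactly as in Step 1 of the proof of Theorem~\ref{example 1}. Compute $V^u_t$ through the chain rule. The term multiplying $l_u'(t)$ vanishes by the boundary identity at $(\cdot,t,l_u(t))$. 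Substituting the waiting-region equation at the projected point then reduces the generator to
\[
\Bigl[\tfrac{\dd}{\dd t}\bigl(\theta\ee^{\rho(T-t)}c_0(t)\bigr)-b\ee^{-b(T-t)}+\alpha_2u\ee^{-b(T-t)}\cdot(\text{cancelling})\Bigr](l_u-y)+\tfrac{\dd}{\dd t}\bigl(\theta\ee^{\rho(T-t)}\alpha_1c_1\bigr)\tfrac{l_u^2-y^2}{2}.
\]
Here the drift difference between the point $y$ and the point $l_u$ cancels against the $y$-dependent part of $V^u$. Factoring out $(l_u-y)\ge0$ leaves condition (3) evaluated with $(l_u+y)/2$ in place of $l_u/2$. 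I expect this bookkeeping to be the main obstacle. In particular, one has to check that the coefficient of $y$ has a sign allowing $(l_u+y)/2$ to be replaced by $l_u/2$, possibly by using $y\le l_u$ together with condition (3). Should that fail, I would redo the computation keeping $y$, and rewrite the required sign condition in the form stated.

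\textbf{Verification with a discontinuous $g^u_y$.} As in the proof of Theorem~\ref{example 1}, $g^u_y$ may be discontinuous across $y=l_u(t)$. Condition (2), namely that $l_u$ is nonincreasing where it is positive, ensures that intervention happens only at the initial time. After that, the triple stays in $\overline{W^u}$, so It\^o's formula is applied only within one smooth region. Proposition~\ref{square int} gives admissibility and the integrability in Condition (4) of Theorem~\ref{verification thm}, since all the relevant derivatives have at most linear growth. Steps 1 through 3 of that verification proof then go through, yielding that $\Xi^u$ is an equilibrium with value function $V^u$ and auxiliary function $g^u$.
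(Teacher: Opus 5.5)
\textbf{Gap: the variational inequality on $P^u$ is not proved.} Your route is the paper's: its proof simply says to rerun the argument of Theorem~\ref{example 1} with the regular control set reduced to $\{u\}$. Your checks of the waiting-region equations, the location of $l_u$, the pasting identities on $P^u$, and the use of condition (2) to keep the equilibrium path in $\overline{W^u}$ are fine. The gap is the step you flag yourself. The inequality $(\Ac^uV^u)+\gamma\sigma_H^2u^2(g^u_x)^2\ge 0$ on $P^u$ is never established, and the route you sketch does not close it.

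Set $C_0(t):=\theta\ee^{\rho(T-t)}c_0(t)$ and $C_1(t):=\theta\ee^{\rho(T-t)}\alpha_1c_1(t)$. Where $0<l_u(t)<m$, the boundary identity you use to kill the $l_u'$-term holds, and the computation gives
\[
(\Ac^uV^u)+\gamma\sigma_H^2u^2(g^u_x)^2=\bigl(l_u(t)-y\bigr)\beta(y),\qquad \beta(y):=C_0'(t)-b\ee^{-b(T-t)}+\alpha_2u\ee^{-b(T-t)}+\tfrac12C_1'(t)\bigl(l_u(t)+y\bigr).
\]
The drift difference does not cancel. It survives as $\alpha_2u\ee^{-b(T-t)}(l_u-y)$, just as $\alpha_2\Pi^*\ee^{-b(T-t)}(k-y)$ survives in the proof of Theorem~\ref{example 1}.

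The hypotheses do not fix the sign of $C_1'$, so condition (3) together with $y\le l_u$ is not enough. When $C_1'<0$, $\beta$ is decreasing in $y$, and its minimum over $[0,l_u]$ is $\beta(l_u)$. That value involves $C_1'l_u$ rather than $\tfrac12C_1'l_u$, and condition (3) does not control it. Falling back on ``rewriting the sign condition'' is not available, since the hypotheses are given.

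\textbf{Missing idea: condition (2) controls the endpoint $y=l_u$.} Differentiating $C_1l_u=\ee^{-b(T-t)}+\frac{\alpha_2u}{b}(1-\ee^{-b(T-t)})-C_0$ gives
\[
C_1(t)\,l_u'(t)=(b-\alpha_2u)\ee^{-b(T-t)}-C_0'(t)-C_1'(t)\,l_u(t),\qquad\text{so}\qquad \beta\bigl(l_u(t)\bigr)=-C_1(t)\,l_u'(t)\ge 0
\]
by conditions (1)--(2). At the other end, $\beta(0)$ is the quantity in condition (3) plus $\alpha_2u\ee^{-b(T-t)}$, hence nonnegative. This uses $\alpha_2u\ge 0$, the sign the paper implicitly takes in its coupled model. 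Since $\beta$ is affine in $y$, it is nonnegative on $[0,l_u]$, and the inequality on $P^u$ follows.

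So condition (2) does double duty, not only the ``intervention only at the initial time'' job you assign it. With this step supplied, the rest of your argument goes through as in the paper.
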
 \noindent
\begin{proof}
    The proof follows the similar arguments to that  of Theorem~\ref{example 1} by setting the admissible set of  regular control to $\{u\}$ and thus is omitted for brevity.
\end{proof}

\begin{remark}
     If $\theta \ee^{\rho(T-t)}c_0(t) = \ee^{-b(T-t)}$,  $\theta\alpha_1 \ee^{\rho(T-t)}c_1(t) = 1$, $\alpha_2$ is sufficiently small and $T$ such that $-\frac{1}{b}\ln \frac{a_L-a_H-\alpha_2 m}{2\gamma\sigma_H^2}<T<\frac{\gamma\sigma_H^2}{\alpha_2^2}$, then the conditions in Assumption~\ref{assump-solution}, Theorem~\ref{singular_only} and Theorem~\ref{example 1} are satisfied.  
\end{remark}

\begin{remark}
Unlike the mixed regular-singular control problem in Theorem~\ref{example 1}, the equilibrium singular control problem with a fixed regular control yields a single-regime free boundary without state-dependent switching. Although both scenarios share the same underlying trade-off between adjustment costs and risk exposure, the piecewise boundary structure disappears when the regular control is fixed. Without the feedback loop from the regular control, the state-dependent transitions across the free boundary are entirely eliminated.
\end{remark}
\begin{theorem}\label{regular-only}
    Under Assumption~\ref{assump-solution}, let $y \in [0,m]$ be given and fixed the singular control by $\xi_t \equiv y, t \in [0,T]$, consider the reduced regular-control-only equilibrium problem, that is, choosing a regular control $\pi$ to minimize $$J(x,t;\pi,y):=\mathbb{E}_{x,t,y}[X^{\pi,y}_{T}] + \gamma \operatorname{Var}_{x,t,y}[X^{\pi,y}_{T}].$$ Then the  equilibrium regular control is 
    \begin{equation}\label{pi^y}
    \pi^y(t) = \mathbf{1}_{\{t \le f(y)\}} + \frac{[a_L-a_H+\alpha_2 (y-m)]}{2\gamma \sigma_H^2}\ee^{b(T-t)}\mathbf{1}_{\{t>f(y)\}},
    \end{equation}
    where $f(y)$ is defined in \eqref{f(y)}, and the corresponding value function $V^y$ and the auxiliary function $g^y$ are as follows:
    \begin{equation} \label{Vy gy}
    \begin{aligned}
        V^y(x,t) =& \ee^{-b(T-t)}x + \left\{\frac{a_H-\alpha_2(y-m)}{b}(\ee^{-b(T-f(y)}-\ee^{-b(T-t)})+\frac{a_L (1-\ee^{-b(T-f(y))})}{b}\right.\\
        & \left.+\frac{\gamma\sigma_H^2}{2b}(\ee^{-2b(T-f(y)}-\ee^{-2b(T-t)})-\frac{[a_L-a_H+\alpha_2(y-m)]^2}{4\gamma\sigma_H^2}(T-f(y))\right\}\mathbf{1}_{\{t \le f(y)\}}\\
        & +\left\{\frac{a_L}{b}(1-\ee^{-b(T-t)})-\frac{[a_L-a_H+\alpha_2(y-m)]^2}{4\gamma\sigma_H^2}(T-t)\right\}\mathbf{1}_{\{t>f(y)\}},\\
       g^y(x,t)  = & \left\{ \frac{[a_H-\alpha_2(y-m)](\ee^{-b(T-f(y)}-\ee^{-b(T-t)})+a_L (1-\ee^{-b(T-f(y))})}{b}-\frac{[a_L-a_H+\alpha_2(y-m)]^2}{2\gamma\sigma_H^2}\right.\\
       &\times(T-f(y))\Biggl\}\mathbf{1}_{\{t \le f(y)\}}
        +\left\{\frac{a_L}{b}(1-\ee^{-b(T-t)})-\frac{[a_L-a_H+\alpha_2(y-m)]^2}{2\gamma\sigma_H^2}(T-t)\right\}\mathbf{1}_{\{t>f(y)\}}+ \ee^{-b(T-t)}x.\\
        \end{aligned}
    \end{equation}
\end{theorem}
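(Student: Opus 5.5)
The plan is to specialize the verification theorem (Theorem~\ref{verification thm}) to the regime in which the singular control is frozen. With $\xi_t\equiv y$ the action region is empty, and we can take $W^{\Xih}=\Qc$ and drop $y$ as a variable. The extended HJB system then reduces to two equations in $(x,t)$ with terminal data $V^y(x,T)=g^y(x,T)=x$:
\[
\min_{\pi\in[0,1]}\{(\Ac^\pi V^y)+\gamma\sigma_H^2\pi^2(g^y_x)^2\}=0,\qquad (\Ac^{\pi^y}g^y)=0.
\]
The condition $\theta c_0(T)\ge 1$ in Assumption~\ref{assump-solution} is what makes $V(x,T)=x$. Here $\Ac^\pi\varphi=\varphi_t+[a_L(1-\pi)+(a_H-\alpha_2(y-m))\pi-bx]\varphi_x+\tfrac12\sigma_H^2\pi^2\varphi_{xx}$, since $\sigma_L=0$. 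Both $V^y$ and $g^y$ are affine in $x$, so $\varphi_{xx}=0$. Matching the coefficients of $x$ gives $V^y_x=g^y_x=\ee^{-b(T-t)}$.

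Next I determine the control. The pointwise minimization is
\[
\min_{\pi\in[0,1]}\bigl\{(a_H-\alpha_2(y-m)-a_L)\ee^{-b(T-t)}\pi+\gamma\sigma_H^2\ee^{-2b(T-t)}\pi^2\bigr\}.
\]
By Assumption~\ref{assump-solution}(3) the linear coefficient is nonpositive, so the convex quadratic has its unconstrained minimizer at
\[
\pi=\frac{a_L-a_H+\alpha_2(y-m)}{2\gamma\sigma_H^2}\ee^{b(T-t)}\ge 0 .
\]
This minimizer is increasing in $t$. It is at most $1$ exactly when $t\ge f(y)$, with $f$ as in \eqref{f(y)}. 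Clipping at $1$ gives \eqref{pi^y}. The control does not depend on $x$, which is consistent with the ansatz.

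I then solve the HJB equations by integrating in $t$ backward from $T$, in two stages. On $(f(y),T]$ the control is interior. The reduced ODEs are
\[
\partial_t(V^y-\ee^{-b(T-t)}x)=-a_L\ee^{-b(T-t)}+\frac{[a_L-a_H+\alpha_2(y-m)]^2}{4\gamma\sigma_H^2}
\]
for $V^y$, and the analogous ODE for $g^y$ with $\tfrac{1}{2\gamma\sigma_H^2}$ in place of $\tfrac1{4\gamma\sigma_H^2}$. The factor difference comes from $g$ lacking the variance term. Integrating gives the $\mathbf 1_{\{t>f(y)\}}$ branches in \eqref{Vy gy}.

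On $[t,f(y)]$ the control is $\pi=1$. The drift is $a_H-\alpha_2(y-m)-bx$ and the running variance term is $\gamma\sigma_H^2\ee^{-2b(T-t)}$. Integrating backward from $f(y)$, with continuity at $t=f(y)$ as the matching condition, yields the $\mathbf 1_{\{t\le f(y)\}}$ branches. This mirrors the computation already done for $\overline{W^*}\cap\Bc^c$ in the coupled problem. If $f(y)<0$ only the second branch is present, and if $f(y)=T$ only the first.

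It remains to check the remaining hypotheses of Theorem~\ref{verification thm}, and I expect regularity to be the main obstacle. $V^y$ and $g^y$ are continuous by construction, and they are $C^1$ in $t$ across $t=f(y)$ because the feedback $\pi^y$ is continuous there: the interior minimizer equals $1$ at $f(y)$. Hence the HJB right-hand sides match from both sides, and $C^{2,1}$ holds.

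Admissibility of the control and Condition (4) follow from Proposition~\ref{square int}, since the coefficients are Lipschitz with linear growth and the derivatives $V_x,g_x$ are bounded. Finally, because the perturbation class contains no singular moves, Step 3 of the proof of Theorem~\ref{verification thm} applies verbatim. This yields $\liminf_{h\downarrow0}[J(\pi^h)-J(\pi^y)]/h\ge0$, so $\pi^y$ is an equilibrium with value $V^y$ and $g^y=\Eb[X_T]$.
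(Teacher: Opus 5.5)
Your proposal is correct, but it reaches the result by a different route than the paper.

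\textbf{The paper's route.} The paper never writes down the extended HJB system for this reduced problem. It solves the linear SDE by variation of constants, $X_T^{\pi,y}=\ee^{-b(T-t)}x+\int_t^T[a_L+(a_H-\alpha_2(y-m)-a_L)\pi_s]\ee^{-b(T-s)}\dd s+\int_t^T\sigma_H\pi_s\ee^{-b(T-s)}\dd B_s$. Then $J$ and $\Eb[X_T]$ become time integrals whose integrands depend only on $\pi_s$. It minimizes the $J$-integrand pointwise in $s$ and substitutes back to obtain $V^y$ and $g^y$. This is shorter and needs no smoothness or integrability checks. However, the identity $\operatorname{Var}[X_T]=\int_t^T\sigma_H^2\pi_s^2\ee^{-2b(T-s)}\dd s$ it relies on holds only for deterministic $\pi$. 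To cover the adapted perturbations $u$ allowed in Definition~\ref{equ_law}, one must add an easy estimate: the covariance between the $\dd s$- and $\dd B$-integrals over $[t,t+h)$ is $O(h^{3/2})=o(h)$.

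\textbf{Your route.} The verification argument handles arbitrary adapted perturbations automatically through Step 3 of Theorem~\ref{verification thm}. The price is checking the $C^{1}$-in-$t$ pasting at $t=f(y)$, which you justify correctly by the continuity of $\pi^y$ there, and checking Condition (4). It also makes the link to the $\overline{W^*}\cap\Bc^c$ computation in the coupled problem explicit. Strictly, what you use is the regular-control analogue of that theorem, with \eqref{W}, \eqref{P}, \eqref{gP} and all singular terms deleted; this is harmless.

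\textbf{Two small slips.}
\begin{itemize}
\item The interior minimizer $\frac{a_L-a_H+\alpha_2(y-m)}{2\gamma\sigma_H^2}\ee^{b(T-t)}$ is decreasing, not increasing, in $t$. Your conclusion that it is at most $1$ exactly when $t\ge f(y)$ is right, and in fact uses this monotonicity.
\item In the reduced problem, $V^y(x,T)=g^y(x,T)=x$ holds trivially, since there is neither a cost term nor a terminal jump. So $\theta c_0(T)\ge 1$ plays no role there.
\end{itemize}
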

\begin{proof}
    Given $y\in[0,m]$, the controlled risk exposure process follows:
    $$
    \begin{cases}
        \dd X^{\pi,y}_s = [a_L(1-\pi_s)+(a_H-\alpha_2(y-m))\pi_s-b X_s^{\pi,y}]\dd s + \sigma_H \pi_s \dd B_s,\forall s\in(t,T],\\
        X_{t-}=x,
    \end{cases}$$
    we have
    \begin{align*}
        X^{\pi,y}_T  & =  \int_t^T[a_L+(a_H-\alpha_2(y-m)-a_L)\pi_s]\ee^{-b(T-s)}\dd s + \int_t^T \sigma_H \pi_s \ee^{-b(T-s)}\dd B_s+\ee^{-b(T-t)}x,\\ J(x,t;\pi,y) & =  \ee^{-b(T-t)}x + \int_t^T[(a_L+(a_H-\alpha_2(y-m)-a_L)\pi_s)\ee^{-b(T-s)}
    + \gamma \sigma_H^2 \ee^{-2b(T-s)}\pi_s^2 ]\dd s,\\  g(x,t;\pi,y) &= \ee^{-b(T-t)}x + \int_t^T[a_L+(a_H-\alpha_2(y-m)-a_L)\pi_s]\ee^{-b(T-s)}
    \dd s.
    \end{align*}
    Then
    \begin{align*}
    \pi^y_t& = \argmin_{\pi_t\in[0,1]}\left[[a_L(1-\pi_t)+(a_H-\alpha_2(y-m)\pi_t]\ee^{-b(T-t)} + \gamma \sigma_H^2 \ee^{-2b(T-t)}\pi_t^2 \right]\\
    &= \mathbf{1}_{\{t \le f(y)\}} + \frac{[a_L-a_H+\alpha_2 (y-m)]}{2\gamma \sigma_H^2}\ee^{b(T-t)}\mathbf{1}_{\{t>f(y)\}},\quad\forall t\in[0,T],
    \end{align*}
  substituting $\pi_t = \pi^y_t$ into the expression of $J(x,t;\pi,y)$ and $g(x,t;\pi,y)$, \eqref{Vy gy} follows.
\end{proof}

\begin{remark}
The equilibrium regular control policy derived in Theorem~\ref{regular-only} has the same analytical form as the regular control component in Theorem~\ref{example 1} after replacing the accumulated reinsurance variable $\xi_{t-}$ by the prescribed level $y$. In this setting, the singular control is exogenous, so that $\xi$ enters only as a parameter rather than through a feedback equilibrium mechanism.
\end{remark}

We next examine the limiting case in which the coupling parameter $\alpha_2=0$, which leads to a complete decoupling of the equilibrium  regular-singular system.

\begin{theorem}[Decoupling of the equilibrium controls]
Under Assumption~\ref{assump-solution}, and assume that  the following conditions hold:

(1) $\alpha_2 = 0,\ \ \  0 < \theta \alpha_1 c_1(t) \ee^{\rho(T-t)}$.
\vspace{0.5cm}

(2) $y_0'(t) \le 0,\ \ \  \forall t \in\{s\in[0,T]: y_0(t) > 0\}$.
\vspace{0.5cm}

(3) $\begin{aligned}[t]
        &  \left\{\frac{\dd [\theta\ee^{\rho(T-t)}c_0(t)]}{\dd t} +\frac{y_0(t)}{2}\frac{\dd [\alpha_1 c_1(t) \theta \ee^{\rho(T-t)}]}{\dd t} -b\ee^{-b(T-t)}\right\} \mathbf{1}_{\{ y_0(t) \ge 0\}}\ge 0,\nonumber
    \end{aligned}$
    \vspace{0.5cm}
    
\noindent where
\begin{equation}\label{y0}
    \begin{aligned}
        y_0(t) := \frac{\ee^{-b(T-t)}-\theta \ee^{\rho(T-t)}c_0(t)}{\theta \ee^{\rho(T-t)}c_1(t)}.
    \end{aligned}
\end{equation}
Define \begin{equation*}
    \begin{aligned}
       & W_0:=\{(x,t,y)\in\Qc:y > y_0(t)\},\,\ \  P_0 := \{(x,t,y)\in\Qc : y\le y_0(t) \},\\
       & \Pi_0(x,t,y) :=  \frac{a_L-a_H}{2\gamma\sigma_H^2}\ee^{b(T-t)} \mathbf{1}_{\{t>\tau\}} + \mathbf{1}_{\{t \le \tau\}}, \label{Pi0}
    \end{aligned}
\end{equation*}
where $\tau:=T+\frac{1}{b}\ln{\frac{a_L-a_H}{2\gamma\sigma_H^2}}$. Then the regular-singular control law $(\Pi_0,\Xi_0)$ is an equilibrium where $\Xi_0 = (W_0, P_0)$, and the corresponding value function $V_0$ and the auxiliary function $g_0$ are as follows:
\begin{equation*}
\begin{aligned}
  V_0(x,t,y) = 
    \begin{cases}
          \left\{ \frac{a_H}{b}(\ee^{-b(T-\tau)}-\ee^{-b(T-t)})+\frac{a_L (1-\ee^{-b(T-\tau)})}{b}+\frac{\gamma\sigma_H^2}{2b}(\ee^{-2b(T-\tau)}-\ee^{-2b(T-t)})-\frac{(a_L-a_H)^2}{4\gamma\sigma_H^2}(T-\tau)\right\}\mathbf{1}_{\{t \le \tau\}}\\
       \hspace{1cm} +\left\{\frac{a_L(1-\ee^{-b(T-t)})}{b} + \frac{(a_H-a_L)^2 (t-T)}{4\gamma\sigma_H^2}\right\}\mathbf{1}_{\{t > \tau\}} + \ee^{-b(T-t)}x,\quad \forall (x,t,y) \in \overline{W_0},\\
         V_0(x-(y_0(t)-y),t,y_0(t))+\biggl[\theta (c_0(t)+\alpha_1 c_1(t)y)(y_0(t)-y) \\
         \hspace{5cm}+\frac{\theta \alpha_1 c_1(t)}{2}(y_0(t)-y)^2]\biggl] \ee^{\rho(T-t)}, \quad \forall (x,t,y) \in P_0, 
    \end{cases}
    \end{aligned}
\end{equation*}
\begin{equation*}
\begin{aligned}
 g_0(x,t,y)  =
    \begin{cases}
        \ee^{-b(T-t)}x + \left\{   \frac{a_H(\ee^{-b(T-\tau)}-\ee^{-b(T-t)})}{b}+\frac{a_L (1-\ee^{-b(T-\tau)})}{b}-\frac{(a_L-a_H)^2(T-\tau)}{2\gamma\sigma_H^2}\right\}\mathbf{1}_{\{t \le \tau\}}\\
        \   +\left\{\frac{a_L}{b}(1-\ee^{-b(T-t)})-\frac{(a_L-a_H)^2}{2\gamma\sigma_H^2}(T-t)\right\}\mathbf{1}_{\{t>\tau\}}, \quad \forall (x,t,y)\in \overline{W_0},\\
        g_0(x-(y_0(t)-y),t,y_0(t)), \hspace{3.95cm} \forall (x,t,y) \in P_0.
    \end{cases}
\end{aligned}
\end{equation*}
\end{theorem}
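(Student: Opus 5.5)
The plan is to verify the decoupled candidate by the same route as Theorem~\ref{example 1}, i.e.\ through the relaxed form of the Verification Theorem~\ref{verification thm}. The structural simplification is that when $\alpha_2=0$ the drift term $\pi(a_H-\alpha_2(y-m))$ no longer depends on $y$. Consequently the minimiser in \eqref{Ph_def} is independent of the singular state, and by Theorem~\ref{regular-only} it is exactly $\Pi_0$. Indeed, $f(y)$ in \eqref{f(y)} reduces to the constant $\tau$ when $\alpha_2=0$. Likewise, $l_u$ in \eqref{l_u} reduces to $y_0$ for every fixed $u$, so the free boundary is that of Theorem~\ref{singular_only}.

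In the waiting region I would therefore take $V_0$ and $g_0$ on $\overline{W_0}$ to be $V^y$ and $g^y$ of Theorem~\ref{regular-only} with $\alpha_2=0$; these are independent of $y$. Since $\Pi_0$ is the pointwise argmin, Theorem~\ref{regular-only} gives directly that $(\Ac^{\Pi_0}V_0)+\gamma\sigma_H^2\Pi_0^2(\partial_x g_0)^2=0$ there and that \eqref{gW} holds. Next I check the gradient condition. On $\overline{W_0}$ we have $V_{0,x}=\ee^{-b(T-t)}$ and $V_{0,y}=0$, so the second entry of \eqref{min} equals $\theta\ee^{\rho(T-t)}(c_0(t)+\alpha_1c_1(t)y)-\ee^{-b(T-t)}$. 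This is increasing in $y$ by Condition~(1), and it vanishes exactly at $y=y_0(t)$. Hence it is $\ge 0$ on $W_0$ and $=0$ on the boundary, which justifies the partition $(W_0,P_0)$. The terminal conditions \eqref{VT} and \eqref{gT} reduce to $V_0(x,T,y)=g_0(x,T,y)=x$ by Assumption~\ref{assump-solution}(1), as in the remark following that assumption.

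In $P_0$, $V_0$ and $g_0$ are defined by translation to the boundary, so $V_{0,y}-V_{0,x}=-\theta\ee^{\rho(T-t)}c(t,y)$ and $g_{0,x}=g_{0,y}$ hold by construction. The remaining point is the inequality $(\Ac^{\Pi_0}V_0)+\gamma\sigma_H^2\Pi_0^2 g_{0,x}^2\ge0$ in $P_0$. I would reproduce Step~1 of the proof of Theorem~\ref{example 1} with $k$ replaced by $y_0$ and $\alpha_2=0$. Differentiating the translated representation in $t$ and using the boundary identity kills the $y_0'$ term. The $u$-minimisation terms cancel exactly, because $a_u$ no longer depends on the level. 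What is left is
\[
\Bigl[\tfrac{\dd}{\dd t}\bigl(\theta\ee^{\rho(T-t)}c_0(t)\bigr)-b\ee^{-b(T-t)}\Bigr](y_0(t)-y)+\tfrac{\dd}{\dd t}\bigl(\theta\ee^{\rho(T-t)}\alpha_1c_1(t)\bigr)\tfrac{y_0(t)^2-y^2}{2}.
\]
I must show this is nonnegative for $y\le y_0(t)$ using Condition~(3). This is the step I expect to be the main obstacle. The coefficient of the quadratic part must be controlled, since $(y_0^2-y^2)/2=(y_0-y)(y_0+y)/2$ and Condition~(3) only involves the factor $y_0/2$. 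The case where $\tfrac{\dd}{\dd t}(\theta\ee^{\rho(T-t)}\alpha_1c_1)$ is negative therefore needs a separate argument: I would use $y\ge0$, together with the sign-split and monotonicity argument the paper uses for Theorem~\ref{singular_only}, and if needed fall back on the fact that the same inequality already appears there.

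Finally I would check the admissibility and regularity hypotheses. Proposition~\ref{square int} gives the square integrability, and the linear growth of $V_0$ and $g_0$ then yields Condition~(4) of Theorem~\ref{verification thm}. Condition~(2), $y_0'\le0$, makes the dynamics behave exactly as in Theorem~\ref{example 1}: singular intervention occurs only at the initial time, after which the triple stays in $\overline{W_0}$. This is what allows It\^o's formula to be applied separately on $\overline{W_0}$ and $P_0$ despite a possible discontinuity of $g_{0,y}$ across $y=y_0(t)$. I would also note that $V_0$ is $C^{1}$ in $t$ at $t=\tau$, because $\Pi_0$ is continuous there. With these in place, the verification argument closes and yields the equilibrium.
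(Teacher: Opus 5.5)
Your route is the paper's: its proof just points to the verification argument of Theorem~\ref{example 1}. Your computations in $\overline{W_0}$, the translation identities in $P_0$ and the leftover term are all correct. The gap is the step you flag as the main obstacle, and the patch you suggest does not close it.

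Write $C_0(t):=\theta\ee^{\rho(T-t)}c_0(t)$, $C_1(t):=\theta\alpha_1\ee^{\rho(T-t)}c_1(t)$ and $E(t):=\ee^{-b(T-t)}$. Then your leftover term is $(y_0-y)\,\beta(y)$, where $\beta(y):=C_0'-bE+\tfrac12 C_1'(y_0+y)$ is affine in $y$. Condition~(3) says exactly that $\beta(0)\ge0$. If $C_1'<0$, then $\beta$ is decreasing, so the critical end is $y\uparrow y_0(t)$, not $y=0$. Using $y\ge0$ gives nothing there. There is also no argument for Theorem~\ref{singular_only} to fall back on: its proof is omitted by reference to Theorem~\ref{example 1}, whose Step~1 asserts the corresponding sign without comment.

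The missing idea is to differentiate the free-boundary identity $C_0+C_1y_0=E$. This identity requires reading $y_0=(E-C_0)/C_1$, i.e.\ with the factor $\alpha_1$ that the displayed definition of $y_0$ drops. That reading is also what makes your claim that the gradient term vanishes at $y_0$ correct, and what gives $y_0=l_u$. Differentiating and using $E'=bE$ gives
\[
\beta(y_0)=C_0'-bE+C_1'y_0=-C_1\,y_0'\ge0\quad\text{on }\{y_0>0\},
\]
by Conditions~(1) and~(2). Since $\beta$ is affine and nonnegative at both $y=0$ and $y=y_0$, it is nonnegative on $[0,y_0]$. Concretely, Condition~(3) suffices when $C_1'\ge0$, and Condition~(2) suffices when $C_1'\le0$. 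Where $y_0(t)\le0$, the slice of $P_0$ is empty or reduces to $y=y_0=0$, so the factor $y_0-y$ vanishes.

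So Condition~(2) does double duty. Besides ensuring that intervention occurs only at the initial time (the use you mention), it is exactly what yields $(\Ac^{\Pi_0}V_0)+\gamma\sigma_H^2\Pi_0^2(\partial_x g_0)^2\ge0$ near the free boundary inside $P_0$. With this line inserted, the rest of your plan goes through as in the paper.
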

\begin{proof}
    The proof follows the same verification framework as in Theorem~\ref{example 1} and is thus omitted for brevity.
\end{proof}

\begin{remark}
    It can be observed from the explicit expressions of $y^*(t)$ and $y^{*1}(t)$ (see \eqref{y^*},\eqref{y^*1}) that 
    $$\lim_{\alpha_2 \rightarrow 0+} y^*(t;\alpha_2) = \lim_{\alpha_2 \rightarrow 0+} y^{*1}(t;\alpha_2) = y_0(t).$$
    This indicates a structural consistency between the coupled and decoupled regimes at the level of free boundaries. In particular, when $\alpha_2 =0 $, the  switching free boundary degenerates as $y^*(t) = y^{*1}(t) = y_0(t)$, that is, the regular-control switching structure decouples from the free boundary determination and therefore does not affect its characterization.
\end{remark}

\begin{proposition}(Equivalence between joint and separate optimization).
\label{decoupled}
Suppose that $\alpha_2=0$. Then the equilibrium controls obtained from the mixed regular-singular control problem coincide with those obtained from the corresponding single-control problems. More precisely,
$$
\pi_0=\pi^{y}, \,\,\forall y \in [0,m],
\qquad
y_0 = l_u, \,\,\forall u \in [0,1],
$$
where $y_0, l_u, \pi^y$ are defined in    \eqref{y0}, \eqref{l_u} and  \eqref{pi^y}, respectively, and $\pi_0(x,t,y) = \Pi_0(x,t,y)$ which is defined in \eqref{Pi0}. Consequently, the equilibrium strategy of the mixed control problem can be constructed by combining the equilibrium regular control and equilibrium singular control obtained separately.
\end{proposition}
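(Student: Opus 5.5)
The plan is a direct comparison of the explicit formulas, with $\alpha_2=0$ substituted into each of them.

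\emph{Regular part.} Put $\alpha_2=0$ in \eqref{f(y)}. This gives
\[
f(y)=\min\Bigl\{T+\tfrac1b\ln\tfrac{a_L-a_H}{2\gamma\sigma_H^2},\,T\Bigr\},
\]
which no longer depends on $y$. By Condition (3) of Assumption~\ref{assump-solution}, $a_L-a_H>0$, so the logarithm is defined. If $a_L-a_H\le 2\gamma\sigma_H^2$, the minimum equals $\tau=T+\frac1b\ln\frac{a_L-a_H}{2\gamma\sigma_H^2}$. In the opposite case $f(y)=T$. Then $\tau>T$, and both $\pi^y$ and $\Pi_0$ are identically $1$ on $[0,T]$; for $\Pi_0$ this holds because $\frac{a_L-a_H}{2\gamma\sigma_H^2}e^{b(T-t)}>1$ anyway. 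I will treat this case separately, since the indicator sets $\{t\le f(y)\}$ and $\{t\le\tau\}$ agree on $[0,T]$.

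With $\alpha_2=0$, the interior expression in \eqref{pi^y} becomes $\frac{a_L-a_H}{2\gamma\sigma_H^2}e^{b(T-t)}$. This is exactly the interior branch of $\Pi_0$, so $\pi^y(t)=\Pi_0(x,t,y)$ for every $y\in[0,m]$ and all $(x,t)$. It also follows that $\Pi_0$ does not depend on $y$, which is the decoupling in the regular direction.

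\emph{Singular part.} Put $\alpha_2=0$ in \eqref{l_u}. The term $\frac{\alpha_2 u}{b}(1-e^{-b(T-t)})$ drops out, leaving
\[
l_u(t)=\min\bigl\{y_0(t)\,\text{(with } \alpha_1 c_1 \text{ in the denominator)},\,m\bigr\}.
\]
This is independent of $u$. I need to reconcile two small notational points with \eqref{y0}. First, the denominator of $y_0$ is written $\theta e^{\rho(T-t)}c_1(t)$; I will read it as $\theta\alpha_1 e^{\rho(T-t)}c_1(t)$, which is consistent with Condition (1) of the decoupling theorem. Second, there is the cap at $m$. For the cap I argue that the action region $P_0=\{y\le y_0(t)\}$ and the region $\{y\le \min(y_0,m)\}$ coincide within $\Qc$, because $y\le m$ there. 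The point $y=m$ belongs to the waiting region by convention \eqref{W}. Hence the two singular laws induce the same partition of $\Qc$, and $y_0=l_u$ holds as singular control laws for every $u\in[0,1]$.

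\emph{Consequence.} Finally, I conclude that the pair (regular equilibrium from Theorem~\ref{regular-only}, singular equilibrium from Theorem~\ref{singular_only}) is exactly $(\Pi_0,\Xi_0)$. By the decoupling theorem this pair is the mixed equilibrium. The main obstacle is bookkeeping rather than analysis. The cases $\tau\ge T$ versus $\tau<T$ and the truncation at $m$ must be handled so that the equalities hold as laws on $\Qc$ rather than as literal formulas. A related check is that Theorem~\ref{singular_only} with $\alpha_2=0$ may be applied for an arbitrary fixed $u$. Its Conditions (2)–(3) reduce to Conditions (2)–(3) of the decoupling theorem, since $l_u=y_0$ there, so its hypotheses are met.
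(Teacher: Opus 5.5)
Your proposal is correct and is essentially the intended argument: the paper states Proposition~\ref{decoupled} without a written proof, and the justification is exactly the substitution $\alpha_2=0$ into \eqref{f(y)}, \eqref{pi^y}, \eqref{l_u} and \eqref{y0}. Your two interpretive choices are also right: reading the denominator of $y_0$ as $\theta\alpha_1 \ee^{\rho(T-t)}c_1(t)$, and reading $y_0=l_u$ as equality of the induced partitions of $\Qc$ so that the cap at $m$ is absorbed.

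A few minor points:
\begin{itemize}
\item The strict inequality $a_L>a_H$ comes from the model specification $0<a_H<a_L$. Assumption~\ref{assump-solution}(3) gives only $a_L-a_H\ge 0$ when $\alpha_2=0$.
\item No case split on $\tau$ is needed, since $t\le\min\{\tau,T\}\iff t\le\tau$ for $t\in[0,T]$.
\item Your side claim that Conditions (2)--(3) of Theorem~\ref{singular_only} reduce to those of the decoupling theorem is exact only where $y_0(t)\le m$. Where $y_0(t)>m$, Condition (3) involves $m/2$ rather than $y_0(t)/2$. This does not affect the identities the proposition asserts.
\end{itemize}
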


\begin{remark}
Proposition~\ref{decoupled} reveals the structural role of the coupling parameter $\alpha_2$. In the coupled model ($\alpha_2>0$), the term $\alpha_2(y-m)$ enters the drift of the state process and establishes a transmission channel from the singular control to the regular control. Consequently, the equilibrium regular control policy depends on the reinsurance level $\xi$, which induces a nontrivial dependence of the equilibrium value function on $\xi$. This dependence is further reflected in the gradient condition characterizing the singular control, thereby affecting the free boundary.

In the absence of this parameter, the equilibrium regular control strategy does not respond to the singular control state. Consequently, the regular control has no impact on the functional form of the value function with respect to $y$ and the free boundary is identical to that obtained from the corresponding pure singular control problem. 
\end{remark}

The above degenerate analysis is carried out under a specific reinsurance model, where the equilibrium value functions admit a special linear structure in $x$. In a more general setting, such degenerate decoupling results may not hold if the equilibrium value function is nonlinear in $x$. 

\paragraph{Structural Insight.}
The explicit solutions reveal that the equilibrium regular and singular controls cannot be characterized independently. Their interaction generates an endogenous state-space partition jointly determined by the regular-control switching boundary and the singular-control free boundary. Moreover, the decoupling results show that this regime-dependent structure disappears when the coupling parameter vanishes. Therefore, the observed equilibrium geometry is an intrinsic consequence of the interaction between regular and singular controls.


\printbibliography

\appendix
\label{appendix}
\section{Proof of Proposition \ref{square int}}
\label{pro2.1}
\begin{proof}
To establish the square integrability and preclude finite-time explosion, we introduce a sequence of stopping times defined by $\tau_n = \inf\{s\in[t,T]:|X^{\pi,\xi}_s| \ge n\}$ with the convention $\inf \emptyset = T$. Then the stopped process of  $X^{\pi,\xi}$ satisfies the following integral equation for $s \in [t, T]$:
    $$X^{\pi,\xi}_{s\wedge \tau_n} = x + \int_t^{s\wedge \tau_n} \mu(X^{\pi,\xi}_{r},r,\xi_{r},\pi_{r}) \mathrm{d} r + \int_t^{s\wedge \tau_n} \sigma(X^{\pi,\xi}_{r},r,\xi_{r},\pi_{r}) \mathrm{d} B_r - (\xi_{s\wedge \tau_n}-y).$$
By applying the Cauchy-Schwarz inequality and invoking the  growth condition \eqref{Lipschitz}, the drift term can be estimated as follows:
    \begin{equation*}
        \mathbb{E}_{x,t,y}\left[\sup_{s\in[t,T]}\left(\int_t^{s\wedge \tau_n} \mu(X^{\pi,\xi}_r,r,\xi_r,\pi_r) \mathrm{d} r\right)^2\right]  \le KT\left(T+ \int_t^{T} \mathbb{E}_{x,t,y}\left[\sup_{\tau \in [t,r\wedge \tau_n]} (X_{\tau}^{\pi,\xi})^2\right]\mathrm{d} r \right).
    \end{equation*}
    Meanwhile, utilizing Doob's martingale inequality for the stochastic integral term yields
    \begin{align*}
        \mathbb{E}_{x,t,y}\left[\sup_{s\in[t,T]}\left(\int_t^{s\wedge \tau_n} \sigma(X^{\pi,\xi}_r,r,\xi_r,\pi_r) \mathrm{d} B_r\right)^2\right] &\le 4 \mathbb{E}_{x,t,y}\left[\int_t^{T\wedge \tau_n} \sigma^2(X^{\pi,\xi}_r,r,\xi_r,\pi_r) \mathrm{d} r\right] \\
        &\le 4K\mathbb{E}_{x,t,y}\left[\int_t^T \left(1+\sup_{\tau \in [t,r\wedge \tau_n]}(X^{\pi,\xi}_\tau)^2\right) \mathrm{d} r\right].
    \end{align*}
        Applying the basic algebraic inequality $(a+b+c+d)^2 \le 4(a^2+b^2+c^2+d^2)$ and using the boundedness of the singular control process, i.e., $|\xi_{s\wedge \tau_n}-y|^2 \le b^2$, we obtain the following uniform estimate for the stopped state process:
    \begin{align*}
        \mathbb{E}_{x,t,y}\left[\sup_{s\in[t,T]} |X^{\pi,\xi}_{s \wedge \tau_n}|^2 \right] &\le 4\left(x^2+ b^2  + \mathbb{E}_{x,t,y}\left[\sup_{s\in[t,T]}\left(\int_t^{s\wedge \tau_n} \mu \mathrm{d} r\right)^2\right] + \mathbb{E}_{x,t,y}\left[\sup_{s\in[t,T]}\left(\int_t^{s\wedge \tau_n} \sigma \mathrm{d} B_r\right)^2\right]\right)\\
           & \le M + 4K(4+T)\int_t^{T} \mathbb{E}_{x,t,y}\left[\sup_{\tau \in [t,r\wedge \tau_n]} (X_{\tau}^{\pi,\xi})^2\right]\mathrm{d} r.
    \end{align*}
    where $M := 4(x^2 + b^2 + KT^2 + 4KT)$. Using Gronwall inequality yields
    \begin{equation*}
        \mathbb{E}_{x,t,y}\left[\sup_{s\in[t,T]} |X^{\pi,\xi}_{s \wedge \tau_n}|^2 \right] \le M \mathrm{e}^{4KT(4+T)}.
    \end{equation*}
    Using Markov's inequality, we have
    $$\mathbb{P}_{x,t,y}(\tau_n < T) = \mathbb{P}_{x,t,y}\left(\sup_{s\in[t,T]}|X^{\pi,\xi}_{s \wedge \tau_n}|^2 \ge n^2\right) \le \frac{\mathbb{E}_{x,t,y}\left[\sup_{s\in[t,T]}|X^{\pi,\xi}_{s \wedge \tau_n}|^2\right]}{n^2} \le \frac{M \mathrm{e}^{4KT(4+T)}}{n^2} \xrightarrow{n \rightarrow +\infty} 0.$$
     Finally,  using Fatou's lemma and the last inequality,  we obtain
     $$\mathbb{E}_{x,t,y}\left[\sup_{s\in[t,T]} |X^{\pi,\xi}_{s }|^2 \right] = \mathbb{E}_{x,t,y}\left[\lim_{n \rightarrow +\infty}\sup_{s\in[t,T]} |X^{\pi,\xi}_{s \wedge \tau_n}|^2 \right] \le \liminf_{n \rightarrow+\infty}\mathbb{E}_{x,t,y}\left[\sup_{s\in[t,T]} |X^{\pi,\xi}_{s \wedge \tau_n}|^2 \right] \le M \mathrm{e}^{4KT(4+T)}<+\infty.$$
    The integrability of the cost function $c$ follows similarly from the  growth bound \eqref{Lipschitz}, which completes the proof.
\end{proof}

\section{Equilibrium Solution With Full Terminal Jump}\label{B}
    We come to the condition of $V^{**}(x,t,y) = x-(m-y))\left\{1-\frac{\theta \alpha_1 c_1(T) (m-y)}{2}-\theta[c_0(T) + \alpha_1 c_1(T)y]\right\}$ when $\theta \alpha_1 c_1(T)m \le 1-\theta c_0(T)$, that is, at the terminal time, the insurer will optimally purchase all remaining reinsurance capacity.
\begin{theorem}[Explicit equilibrium solution under full terminal jump] \label{example 2}
    Assume that the following conditions hold:

        (1) $\alpha_2 \neq 0, \,\,\sigma_L=0,\,\, a_L-a_H\ge \alpha_2 m$.
    \vspace{0.5em}
    
    (2) $  \theta \alpha_1 c_1(T) m \le 1-\theta c_0(T),\,\, c_0(t)>0, \,\, \theta \alpha_1 c_1(t) \ge 0, \,\,\forall t\in[0,T]$.
    \vspace{0.5em}

     (3) $
    \begin{aligned}[t]
        & \theta \alpha_1 c_1(t) \ee^{\rho (T-t)}- \theta \alpha_1 c_1(T) - \frac{\alpha_2^2 (T-t)}{2\gamma \sigma_H^2} >0,\,\, \forall t \in [0,T),\\
        & \theta\alpha_1 c_1(t) \ee^{\rho (T-t)}- \theta\alpha_1 c_1(T) + \frac{\alpha_2^2}{2b\gamma\sigma_H^2}\ln{\frac{(a_L-a_H-\alpha_2 m)}{2\gamma\sigma_H^2}}\ge 0,\,\,\,\forall t \in[0,T].
    \end{aligned}$
   \vspace{0.5em}

    (4) $\tilde{k}'(t) \le 0,\, \,\forall t \in \{s\in [0,T):\tilde{k}(t)>0\}$.
    \vspace{0.5em}

    (5) $\begin{aligned}[t]
        &  \left\{\frac{\dd [\theta\ee^{\rho(T-t)}c_0(t)]}{\dd t} +\frac{\tilde{k}(t)}{2}\frac{\dd [\alpha_1 c_1(t) \theta \ee^{\rho(T-t)}]}{\dd t} -b\ee^{-b(T-t)}\right\} \mathbf{1}_{\{\tilde{k}(t) \ge 0\}}\ge 0,\nonumber\\
        &y^{**}(0) \ge \hat{y}(0),
    \end{aligned}$ 
    
    where 
    \begin{equation*}
        \begin{aligned}
            y^{**}(t) & := \min\left\{\frac{\theta c_0(t) \ee^{\rho(T-t)} - \ee^{-b(T-t)} - \frac{\alpha_2(t-T)}{2\gamma \sigma_H^2}(a_H-a_L+\alpha_2 m)+[1-\theta c_0(T)]}{\frac{\alpha_2^2(T-t)}{2\gamma\sigma_H^2}-\theta \alpha_1 c_1(t)\ee^{\rho(T-t)}+\theta \alpha_1 c_1(T)}, \ \ m \right\}, 
            \end{aligned}
            \end{equation*}

            \begin{equation*}
        \begin{aligned}
            y^{**1}(t) & := \inf\Biggl\{y\in[0,m]:\ \ \frac{a_L-a_H+\alpha_2(y-m)}{2\gamma\sigma_H^2}\frac{\alpha_2}{b}\left[-1+ \ln \left(\frac{a_L-a_H+\alpha_2(y-m)}{2\gamma\sigma_H^2}\right)\right]\\
            &\quad\quad\quad\quad +\left(\frac{\alpha_2}{b}-1\right)\ee^{-b(T-t)}+\theta \ee^{\rho(T-t)}[c_0(t) +\alpha_1 c_1(t)y] + 1-\theta c_0(T) - \theta \alpha_1 c_1(T)y > 0\Biggl \},\\
            \tilde{k}(t) & := 
            \begin{cases}
        y^{**}(t),\quad &\forall t\in\{t\in[0,T):y \le \hat{y}(t)\},\\
        y^{**1}(t),\quad &\forall t\in \{t\in[0,T):y > \hat{y}(t)\},\\
        0,& t= T.
        \end{cases}
        \end{aligned}
    \end{equation*}
    Define 
    \begin{equation*}
        \begin{aligned}
           & W^{**}:= \{(x,t,y) \in \Qc: y > \tilde{k}(t), t <T \text{ or } y = m\},\, P^{**}:=\{(x,t,y)\in \Qc:y \le \tilde{k}(t), y < m \text{ or } t=T, y < m\},\\
           & \Pi^{**}(x,t,y):=\min \left\{ \frac{[a_L-a_H+\alpha_2 (y-m)]}{2\gamma \sigma_H^2}\ee^{b(T-t)}, \, 1\right\} = \mathbf{1}_{\{y\ge\hat{y}(t)\}} + \frac{[a_L-a_H+\alpha_2 (y-m)]}{2\gamma \sigma_H^2}\ee^{b(T-t)}\mathbf{1}_{\{y < \hat{y}(t)\}}, 
        \end{aligned}
    \end{equation*}
Then the regular-singular control law $(\Pi^{**},\Xi^{**})$ is an equilibrium where $\Xi^{**} = (W^{**},P^{**})$, and the corresponding value function $V^{**}$ and the auxiliary function $g^{**}$ are as follows:
        \begin{equation} \label{V**}
       V^{**}(x,t,y)  = 
        \begin{cases}
             \ee^{-b(T-t)}x+\frac{(a_H-a_L-\alpha_2(y-m))^2}{4\gamma \sigma_H^2}(t-T) + \frac{a_L}{b}(1-\ee^{-b(T-t)})\\
             \hspace{2cm}-(m-y)\left\{1-\frac{\theta \alpha_1 c_1(T) (m-y)}{2}-\theta[c_0(T) + \alpha_1 c_1(T)y]\right\},\,  \forall (x,t,y) \in \overline{W^{**}\cap \Bc},\\
             (\ee^{-b(T-t)} - \ee^{-b(T-f(y)})x + (\ee^{-b(T-f(y))}-\ee^{-b(T-t)})\frac{a_H+\alpha_2(m-y)}{b}\\
             \hspace{3cm}+(\ee^{-2b(T-f(y))}-\ee^{-2b(T-t)})\frac{\gamma \sigma_H^2}{2b}+V^{**}(x,f(y),y),\,\forall (x,t,y) \in \overline{W^{**}}\cap \Bc^c, \\
            V^{**}(x-(k(t)-y),t,k(t))+\biggl[\theta (c_0(t)+\alpha_1 c_1(t)y)(k(t)-y)\\
           \hspace{7cm} +\frac{\theta \alpha_1 c_1(t)}{2}(k(t)-y)^2]\biggl] \ee^{\rho(T-t)},\, \forall (x,t,y) \in P,
        \end{cases}\\
        \end{equation}
        \begin{equation}\label{g**}
        g^{**}(x,t,y)  = 
        \begin{cases}
            \ee^{-b(T-t)}x + \frac{(a_L-a_H+\alpha_2(y-m))^2}{2\gamma \sigma_H^2}(t-T) +\frac{a_L}{b}(1-\ee^{-b(T-t)}), & \forall (x,t,y) \in \overline{W^{**}\cap \Bc},\\
            (\ee^{-b(T-t)}-\ee^{-b(T-f(y))}x + (\ee^{-b(T-f(y))}-\ee^{-b(T-t)}) \frac{a_H+\alpha_2(m-y)}{b} \\
            \hspace{7.5cm}+g^{**}(x,f(y),y),  & \forall (x,t,y) \in \overline{W^{**}}\cap \Bc^c\\
            g^{**}(x-(k(t)-y),t,k(t)), & \forall  (x,t,y) \in P^{**},
        \end{cases}
    \end{equation}
    where $\Bc = \{(x,t,y)\in\Qc:t > f(y)\}$, $\Bc^c$ is the complement of $\Bc$ in $\Qc$, and $f(y)$ is defined in \eqref{f(y)}.
\end{theorem}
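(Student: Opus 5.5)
The plan is to mirror the proof of Theorem~\ref{example 1}, replacing the terminal datum $V(x,T,y)=x$ by the full-jump datum $V^{**}(x,T,y)=x-(m-y)\{1-\frac{\theta\alpha_1c_1(T)(m-y)}{2}-\theta[c_0(T)+\alpha_1c_1(T)y]\}$. This is the datum produced by \eqref{VT} when $\theta\alpha_1c_1(T)m\le 1-\theta c_0(T)$. In that regime the quadratic in $r$ in \eqref{VT} is increasing on $[0,m-y]$, so the maximizer is $r=m-y$. By Assumption~\ref{asm:tie-breaking} and \eqref{gT} this gives $g^{**}(x,T,y)=x-(m-y)$, consistent with the full terminal jump that places every $(x,T,y)$ with $y<m$ in $P^{**}$.

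\textbf{Step 1: the ansatz.} I would first redo the ansatz \eqref{ansatz} on $\Bc$ with these terminal data. As before, $u_3=v_3=\ee^{-b(T-t)}$, and $\Pi^{**}$ is unchanged, since the $\pi$-minimization only involves $u_3,v_3$. The new terminal values $u_2(T)=-\theta\alpha_1c_1(T)/2$ and $u_1(T)=1-\theta c_0(T)-\theta\alpha_1 c_1(T)m$ (after expanding) only add constants to the gradient expression $\theta\ee^{\rho(T-t)}c-u_3+2u_2y+u_1$.

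\textbf{Step 2: the free boundary.} Solving this expression for zero gives $y^{**}$ on $\Bc$. In $\Bc^c$, where $\pi\equiv1$, I would propagate backward from $t=f(y)$ exactly as in the main text. The added term $1-\theta c_0(T)-\theta\alpha_1c_1(T)y$ yields $y^{**1}$. Condition (3) plays the role of \eqref{y1-increasing}: it makes the gradient expression increasing in $y$, so the waiting and action regions are separated by $\tilde k(t)$.

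\textbf{Step 3: the extended HJB system.} Next I would check that $(V^{**},g^{**})$ satisfy \eqref{min}--\eqref{gT}. On $\overline{W^{**}}$ this holds by construction. On $P^{**}$ I would repeat Step~1 of the proof of Theorem~\ref{example 1}: write $V^{**}_t$ via the chain rule through $\tilde k$, use the smooth-fit identity \eqref{PcapW1} at $(x-(\tilde k-y),t,\tilde k)$, and bound the generator below by $\bigl[(\alpha_2\Pi^{**}-b)\ee^{-b(T-t)}+\frac{\dd}{\dd t}(\theta\ee^{\rho(T-t)}c_0)\bigr](\tilde k-y)+\frac{\dd}{\dd t}(\theta\ee^{\rho(T-t)}\alpha_1c_1)\frac{\tilde k^2-y^2}{2}$. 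This is nonnegative by condition (5) together with $\alpha_2\Pi^{**}\ge0$. Condition (5b), $y^{**}(0)\ge\hat y(0)$, is used, as in Theorem~\ref{example 1}, to fix which branch of $\tilde k$ is active, and regularity follows from the analogue of the preceding lemma.

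\textbf{Step 4: verification, the main obstacle.} Finally I would apply Theorem~\ref{verification thm} with the modifications from the proof of Theorem~\ref{example 1}. Because $\tilde k'\le0$ by condition (4), interventions occur only at the initial instant and at $T$, and $g^{**}_y$ is discontinuous only across $y=\tilde k(t)$; Itô's formula is then applied piecewise inside $\overline{W^{**}}$. The difficulty specific to this case is the jump at $T$. The boundary $\tilde k$ drops to $0$ at $T$, while the whole strip $y<m$ becomes action region, so $V^{**}$ and $g^{**}$ need not be continuous up to $t=T$ from $W^{**}$. I would handle this by applying Itô only on $[t,T)$ and then adding the terminal jump explicitly through \eqref{VT}, \eqref{gT}, as in Step~2 of Theorem~\ref{verification thm} (the argmax there equals $m-\Xh_{T-}$). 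Finally I would check that $\lim_{t\uparrow T}V^{**}(x,t,y)$ matches this post-jump value, which is why the $-(m-y)\{\cdots\}$ term is built into $V^{**}$ on $W^{**}\cap\Bc$ while $g^{**}$ carries no such term.
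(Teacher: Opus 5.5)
Your route is the paper's: its proof of this theorem is a one-line deferral to Theorem~\ref{example 1}, and your Steps 1--4 spell that deferral out. You also go beyond the paper by confronting the terminal jump. Two points need correcting, however, and a third is inherited from the paper. \textbf{Terminal coefficients.} Expanding the full-jump datum gives
\[
V^{**}(x,T,y)=x+\bigl(1-\theta c_0(T)\bigr)y-\tfrac{\theta\alpha_1 c_1(T)}{2}\,y^2+\tfrac{\theta\alpha_1 c_1(T)}{2}\,m^2-\bigl(1-\theta c_0(T)\bigr)m .
\]
So $u_2(T)=-\theta\alpha_1c_1(T)/2$ is right, but $u_1(T)=1-\theta c_0(T)$, not $1-\theta c_0(T)-\theta\alpha_1c_1(T)m$. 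With your value the numerator of $y^{**}$ would carry an extra $-\theta\alpha_1c_1(T)m$. It would also contradict the term $1-\theta c_0(T)-\theta\alpha_1c_1(T)y$ that you correctly use for $y^{**1}$ in Step 2. With the correct value, $\theta\ee^{\rho(T-t)}c-u_3+2u_2y+u_1$ vanishes identically at $t=T$: numerator and denominator of $y^{**}$ both tend to $0$. This is why $\tilde k(T)$ has to be prescribed separately, and why every $y<m$ lies in $P^{**}$ at $T$.

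\textbf{The auxiliary function at $T$.} On $\overline{W^{**}\cap\Bc}$ the displayed $g^{**}$ coincides with $g^*$ from Theorem~\ref{example 1}. Hence $g^{**}(x,T-,y)=x$ there, whereas \eqref{gT} requires $x-(m-y)$. Run exactly your procedure: It\^o on $[t,T)$, then the terminal jump through \eqref{gT}, using that $\tilde k$ is nonincreasing so that $\xh_{T-}=\max\{y,\tilde k(t)\}$. You get $g^{**}(x,t,y)=\Eb_{x,t,y}[\Xh_{T-}]$ and $\Eb_{x,t,y}[\Xh_T]=g^{**}(x,t,y)-(m-\max\{y,\tilde k(t)\})$. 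So the displayed $g^{**}$ is not the conditional mean that Theorem~\ref{verification thm} needs, both for the conclusion of its Step 1 and for the decomposition \eqref{2.5} at the random point $(X^{u,\eta}_{(t+h)-},t+h,\eta_{(t+h)-})$. Your closing remark that this is \emph{why} $g^{**}$ carries no $-(m-y)$ term is therefore not a justification: the terminal jump belongs in both functions.

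The repair is short, and there are two options. First, you can replace $g^{**}$ by $\bar g:=g^{**}-(m-\max\{y,\tilde k(t)\})$ for $t<T$. The correction depends only on $y$ in $W^{**}$ and only on $t$ in $P^{**}$, so $\bar g$ still satisfies \eqref{gW} and \eqref{gP}; it also satisfies \eqref{gT} and is continuous up to $T$, and Step 1 of Theorem~\ref{verification thm} then applies as in Theorem~\ref{example 1}. Second, you can keep $g^{**}$ and show the discrepancy is harmless. It is deterministic along the equilibrium path, so $\operatorname{Var}_{x,t,y}(\Xh_T)=\operatorname{Var}_{x,t,y}(\Xh_{T-})$ in Step 2. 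Under a perturbation it ranges over an interval of length at most $Mh$ by \eqref{pert_xi}, so it changes the variance term in \eqref{2.5} only by $O(h^{3/2})=o(h)$. Either way $V^{**}$ and the equilibrium property survive, but the auxiliary function as stated must be corrected.

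\textbf{Inherited from Theorem~\ref{example 1}.} After factoring out $\tilde k-y$, your lower bound contains $\frac{\tilde k+y}{2}\frac{\dd}{\dd t}[\theta\ee^{\rho(T-t)}\alpha_1c_1(t)]$, whereas condition (5) controls only the coefficient $\frac{\tilde k}{2}$, i.e.\ the case $y=0$. In addition, $\alpha_2\Pi^{**}\ge0$ uses $\alpha_2>0$, not merely $\alpha_2\neq0$. So ``nonnegative by condition (5)'' also needs either $\frac{\dd}{\dd t}[\theta\ee^{\rho(T-t)}\alpha_1c_1(t)]\ge0$, or a bound absorbing $\frac{y}{2}$ times that derivative into $\alpha_2\Pi^{**}\ee^{-b(T-t)}$.
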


\begin{proof}
    The proof follows the same arguments as those of Theorem~\ref{example 1} and is thus omitted for brevity.
\end{proof}

\begin{remark}
The parameter specification considered in Remark~\ref{c1 c_2 ex1} does not satisfy Condition (1) of Theorem~\ref{example 2}. To illustrate the applicability of the present theorem, consider instead
$$ \theta c_0(t) \ee^{\rho(T-t)} = \ee^{-b(T-t)},\, \ \ \ \ \theta \alpha_1 c_1(t) \ee^{\rho(T-t)} = \frac{\alpha_2^2(T-t)}{\gamma\sigma_H^2}.$$
If $\alpha_2 m \le \min\{2\gamma\sigma_H^2\ee^{-bT},\,\, a_L-a_H-2\gamma\sigma_H^2\}$, it is readily verified that the  conditions in Theorem~\ref{example 2} are satisfied, and hence the explicit solution characterized in Theorem~\ref{example 2} arises under this parameter regime.
\end{remark}
\end{document}